 \newtheorem{thm}{Theorem}[section]
 \newtheorem{lem}[thm]{Lemma}
 \theoremstyle{definition}
 \newtheorem{defn}[thm]{Definition}
 \theoremstyle{remark}
 \newtheorem{rem}[thm]{Remark}
 \numberwithin{equation}{section}
\begin{document}

%
%
%
%
%
%
%
%
%

\title[A class of forward-backward diffusion equations for ...]
 {A class of forward-backward diffusion equations for multiplicative noise removal}

\author[Y. Tong]{Yihui Tong}

\address{%
School of Mathematics\\
Harbin Institute of Technology\\
Harbin 150001 Heilongjiang\\
China}

\email{23B912019@stu.hit.edu.cn}

\author[W. Liu]{Wenjie Liu}

\address{%
School of Mathematics\\
Harbin Institute of Technology\\
Harbin 150001 Heilongjiang\\
China}

\email{liuwenjie@hit.edu.cn}

\author[Z. Guo]{Zhichang Guo}

\address{%
School of Mathematics\\
Harbin Institute of Technology\\
Harbin 150001 Heilongjiang\\
China}

\email{mathgzc@hit.edu.cn}

\author[W. Yao]{Wenjuan Yao}

\address{%
School of Mathematics\\
Harbin Institute of Technology\\
Harbin 150001 Heilongjiang\\
China}

\email{mathywj@hit.edu.cn}

\subjclass{Primary 99Z99; Secondary 00A00}

\keywords{Forward-backward diffusion, Rothe’s method,  Young measure solutions, Multiplicative noise removal}

\date{January 1, 2004}

\begin{abstract}
This paper investigates a class of degenerate forward-backward diffusion equations with a nonlinear source term, proposed as a model for removing multiplicative noise in images. Based on Rothe’s method, the relaxation theorem, and Schauder’s fixed-point theorem, we establish the existence of Young measure solutions for the corresponding initial boundary problem. The continuous dependence result relies on the independence property satisfied by the Young measure solution. Numerical experiments illustrate the denoising effectiveness of our model compared to other denoising models.
\end{abstract}

\maketitle
\section{Introduction}
Multiplicative noise, often appearing in synthetic aperture radar, ultrasound, and laser images, significantly degrades image edges and details \cite{tur1982,shao2020}. Over the past three decades, various methods based on partial differential equations (PDEs) have been developed for image denoising. Given an original image $u$, we assume it has been corrupted by multiplicative noise $\eta$. Our aim is to recover $u$ from $f$ that satisfies $f=u\cdot\eta$. Let $\Omega\subset\mathbb{R}^n$ ($n\geq 2$) be a bounded domain with appropriately smooth boundary $\partial\Omega$ and $0<T<\infty$. We consider the following nonlinear degenerate diffusion equation of forward-backward type:
\begin{numcases}{}
	\frac{\partial u}{\partial t}=\operatorname{div} \left(\alpha(x)\left(\frac{1}{1+\left|\nabla u\right|^2}+\delta\left|\nabla u\right|^{p-2}\right)\nabla u \right)-\lambda\frac{u-f}{u^2}, & \text{in }$Q_T$,\label{P1}\\
	\langle\nabla u,\vec{n}\rangle=0, &\text{in }$S_T$,\label{P2} \\
	u(x,0)=f(x), & \text{in }$\Omega$,\label{P3}
\end{numcases}
where $Q_T=\Omega\times(0,T)$, $S_T=\partial\Omega\times(0,T)$, with parameters $1<p\leq 2$, $\delta>0$, and $\lambda>0$. The positive coefficient $\alpha\in L^{\infty}(\Omega)$ denotes the gray intensity of the noisy image $f$. The forward-backward diffusion nature of equation (\ref{P1}) makes it a suitable model for simultaneously enhancing image edges and reducing noise. However, this characteristic often precludes the existence of classical solutions, leading to the analysis of problems (\ref{P1})--(\ref{P3}) in the sense of Young measure solutions.

Since the work of \cite{rudin2003mul}, numerous variational models \cite{shi2008,li2010,dong2013convex} have been developed to address multiplicative noise. The standard approach to implementing these models involves deriving their corresponding evolution equations and discretizing them to evaluate their performance on noisy images. In general, a variational model yields the following evolution equation:
$$
\frac{\partial u}{\partial t}=\text{div}\left(a(\left|\nabla u\right|,u)\nabla u\right)-\rho h(f,u),\quad \text{in } Q_T,
$$
where $a(\left|\nabla u\right|,u)$ and $h(f,u)$ are given by the corresponding variational model. However, in practice, the denoising problem can be resolved directly using the diffusion equation rather than the evolution equation derived from the variational model; that is, the problem reduces to designing the proper diffusion coefficient $a(\left|\nabla u\right|,u)$ and the source term $h(f,u)$ to guide the image denoising process \cite{zhou2014}. Based on the maximum a posteriori (MAP) estimation of multiplicative Gamma noise, the source term $h(f,u) = \frac{u-f}{u^2}$ was derived in \cite{aubert2008}, providing an effective approach for multiplicative noise removal. For instance, see \cite{liu2013,gaotl2022}. Additionally, the image gray-level indicator $\alpha$ is designed to be much smaller at low gray levels than at high gray levels \cite{zhang2020}, aligning with the characteristic that higher multiplicative noise intensity causes greater damage to the image. This design serves as an effective tool for simultaneously preserving details and removing noise. For further information, we refer to \cite{dong2013,yaowj2019}. Inspired by the work
of \cite{perona1990}, various diffusion coefficients of the form $a(\left|\nabla u\right|,u)$ have been proposed for image denoising applications \cite{guo2011four,guozc2011,shao2020}. To preserve the image restoration capability of the Perona-Malik (PM) equation \cite{perona1990}, Guidotti et al. \cite{guidotti2013} considered a class of forward-backward diffusion equations to remove additive noise
\begin{equation}
    \frac{\partial u}{\partial t}=\text{div}\left(q(\nabla u)\right)=:\text{div}\left(\frac{\nabla u}{1+\left|\nabla u\right|^2}+\delta\left|\nabla u\right|^{p-2}\nabla u\right),\quad\text{in } Q_T,\label{guidottieq}
\end{equation}
with $q=\nabla \varphi$ satisfying the structure conditions:
\begin{equation}
	\max\{\gamma_1\left|\xi\right|^p-1,0\}\leq \varphi(\xi)\leq \gamma_2\left|\xi\right|^p+1,\quad \left|q(\xi)\right|\leq \gamma_2\left|\xi\right|^{p-1},\quad\xi\in \mathbb{R}^n,\label{structure-condition}
\end{equation}
for some constants $0<\gamma_1\leq\gamma_2$ and parameters $1<p\leq 2$, $\delta>0$. Equation (\ref{guidottieq}) can be interpreted either as a spatial regularization of the PM equation or as a nonlinear scale-space model for image edge enhancement, see \cite{li2024,zhou2014,shao2022}. Even after regularization, the equation remains of forward-backward type, at least for $\delta<\frac{1}{8}$. Equation (\ref{guidottieq}) effectively mitigates the staircasing effect resulting from the discretization of the PM equation, leading to a milder micro-structured ramping characterized by alternating gradients of finite size \cite{guidotti2012backward}. To the best of our knowledge, theoretical investigations into the forward-backward reaction-diffusion equation for addressing multiplicative noise in image processing remain scarce.

For problem (\ref{P1})--(\ref{P3}), there are several papers concerning the existence and uniqueness of 
degenerate forward-backward diffusion equations without a source term, namely $\lambda=0$. In this case, equation (\ref{P1}) can be generalized to the following form:
\begin{equation}
    \frac{\partial u}{\partial t}=\operatorname{div} \left(\varPhi\left(\nabla u\right)\right), \quad \text{in } Q_T,\label{p-growth-equation}
\end{equation}
where the heat flux $\varPhi=\nabla\varPsi$ with some potential $\varPsi\in C^1(\mathbb{R}^n)$, and $\varPhi$ and $\varPsi$ satisfy
the same structure conditions (\ref{structure-condition}) as $q$ and $\varphi$, respectively. 

When $\varPsi$ is not convex, violating the monotonicity condition $(\varPhi(x)-\varPhi(y))\cdot(x-y)\geq 0$ for some $x,y\in\mathbb{R}^n$, equation (\ref{p-growth-equation}) constitutes a degenerate, forward-backward parabolic equation that generally admits no classical strong or distributional solutions. Equations like (\ref{p-growth-equation}) arise in modeling phenomena such as melting or freezing during superheating or supercooling, as well as in phase transitions where unstable and metastable states are permitted \cite{elliott1985,visintin2002}. In the one-dimensional case, the study of equation (\ref{p-growth-equation}) is primarily motivated by the Clausius-Duhem inequality. Based on the main constitutive assumption that $\varPhi$ is piecewise affine, H\"{o}llig \cite{hollig1983} established the existence of infinitely many weak solutions for (\ref{p-growth-equation}) under the homogeneous Neumann boundary
condition. The assumption on $\varPhi$ was relaxed by enhancing the regularity of the initial values in \cite{zhang2006}. However, Lair \cite{lair1985} showed that there exists at most one smooth solution. For the 
first or second initial boundary value problem of (\ref{p-growth-equation}), Slemrod \cite{slemrod1991} studied the asymptotic behavior of measure-valued solutions.

Young measure representation was employed for forward-backward diffusion equations in \cite{kinderlehrer1992}, where Kinderlehrer and Pedregal explored the homogeneous Dirichlet boundary problem (\ref{p-growth-equation}) under the structure condition (\ref{structure-condition}). When $p=2$, they defined a Young measure solution $u\in L^{\infty}(0,T;H_0^1(\Omega))$ with $\frac{\partial u}{\partial t}\in L^2(Q_T)$ as follows:
$$
\frac{\partial u}{\partial t}(x,t)=\text{div}\int_{\mathbb{R}^n}\varPhi(\xi)d\nu_{x,t}(\xi),\quad\text{in}~H^{-1}(Q_T),
$$
$$
\nabla u(x,t)=\int_{\mathbb{R}^n}\xi d\nu_{x,t}(\xi),\quad\text{a.e.}~(x,t)\in Q_T,
$$
and (\ref{P3}) holds in the sense of trace, where $(\nu_{x,t})_{(x,t)\in Q_T}$ denotes a $H^1(Q_T)$-gradient Young measure on $\mathbb{R}^n$. Valuable discussions on Young measures are available in \cite{ball1989,young1969,diPerna1985,evans1990}. Kinderlehrer et al. \cite{kinderlehrer1992} established the existence of Young measure solutions by employing Rothe’s method and variational method, incorporating the relaxation theorem. Within this framework, Demoulini \cite{demoulini1996} further explored the qualitative properties of Young measure solutions obtained in \cite{kinderlehrer1992} and found that the uniqueness result depends on the following independence property:
$$
\int_{\mathbb{R}^n}\varPhi(\xi)\cdot\xi d\nu_{x,t}(\xi)=
\int_{\mathbb{R}^n}\varPhi(\xi) d\nu_{x,t}(\xi)\cdot \int_{\mathbb{R}^n}\xi d\nu_{x,t}(\xi),\quad (x,t)\in Q_T,
$$
namely that the heat flux $\varPhi$ and the gradient $\nabla u$ of the solution be independent with respect to the Young measure $\nu$. The results of Demoulini were subsequently extended from the Dirichlet case to the periodic and Neumann settings in \cite{guidotti2012backward}.
When $1\leq p<2$, Yin and Wang \cite{yin2003} studied the first initial-boundary value problem of (\ref{p-growth-equation}). In the case $1<p<2$, they employed the approach similar to that in \cite{demoulini1996} to prove the existence of Young measure solutions. As for the case $p=1$, the equation (\ref{p-growth-equation}) is singular, and Young measure solutions should be considered in $L^{\infty}(0,T;BV(\Omega))$. The uniqueness of the Young measure solution was also analyzed in \cite{yin2003}.

To our knowledge, only a few papers address forward-backward diffusion equations with a source term. For more details, interested readers may refer to \cite{wang2014young}. In
this paper, we study the existence and continuous dependence of Young measure solutions for problem (\ref{P1})--(\ref{P3}). There are three main difficulties to overcome. The first difficulty arises from the presence of the source term, which prevents the direct application of the relaxation theorem to obtain approximate solutions for the Young measure solution of problem (\ref{P1})--(\ref{P3}). As a result, the existence of Young measure solutions cannot be established using the method from \cite{guidotti2012backward}. To address this, we employ Schauder's fixed-point theorem in the proof to establish existence. The second one stems from the nonconvexity of the potential $\varphi$, which implies the minimization of a nonconvex variational principle. To overcome this, we consider the relaxation of the nonconvex functional. The last one is the complexity introduced by the nonlinear dependence of the heat flow $q$ on the gradient of the solution $\nabla u$. This is managed by replacing the nonlinearity $q(\nabla u)$ with the expected value $\int_{\mathbb{R}^n}q(\xi)d\nu(\xi)$, where $q$ is evaluated against a Young measure $\nu$. Our main result is stated as follows:
\begin{thm}\label{thirdtheorem}
    If $f\in W^{1,p}(\Omega)\cap L^{\infty}(\Omega)$ and $\inf\{f(x)\}>0$, then there exists a unique Young measure solution $u$ to the problem \emph{(\ref{P1})}--\emph{(\ref{P3})}. Furthermore, the solution $u$ is also continuously dependent on the initial data in the $L^2$ norm.
\end{thm}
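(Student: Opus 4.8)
\medskip
\noindent\textbf{Proof strategy.}
The plan is to peel the singular reaction term $\lambda(u-f)/u^{2}$ off the forward--backward principal part by a fixed-point argument, to solve the resulting auxiliary problem by coupling Rothe's method with the relaxation of the (nonconvex) one-step variational principle in the spirit of the source-free Neumann theory of \cite{kinderlehrer1992,demoulini1996,guidotti2012backward}, and to read off uniqueness and $L^{2}$ continuous dependence from the independence property of the Young measure. Throughout, write $q=\nabla\varphi$; since $q(0)=0$, the origin minimizes $\varphi$ and hence its convex envelope $\varphi^{\ast\ast}$ (which keeps the $p$-growth in (\ref{structure-condition}) and is $C^{1}$), so $(\varphi^{\ast\ast})'(\xi)\cdot\xi\ge 0$. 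Set $m=\inf_{\Omega}f>0$, $M=\|f\|_{L^{\infty}(\Omega)}$, and
$$
K=\bigl\{w\in C([0,T];L^{2}(\Omega)):\ m\le w\le M\ \text{a.e. in }Q_{T},\ w(\cdot,0)=f\bigr\},
$$
a closed, bounded, convex subset of $C([0,T];L^{2}(\Omega))$. For $w\in K$ I would consider the \emph{frozen problem} obtained from (\ref{P1})--(\ref{P3}) by replacing the denominator $u^{2}$ of the source with $w^{2}$, keeping the numerator $u-f$; freezing only the denominator is crucial, since it preserves the fact that the reaction $\lambda(f-u)/w^{2}$ points upward near $u=0$, which later yields the positive lower bound.

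\smallskip
\noindent\emph{Step 1: the frozen problem via Rothe + relaxation.}
Discretize $[0,T]$ with step $\tau=T/N$, put $u_{0}=f$, and let $u_{k}\in W^{1,p}(\Omega)$ be the minimizer — unique by strict convexity in the $L^{2}$ slot — of the relaxed functional
$$
\bar J_{k}(v)=\int_{\Omega}\alpha(x)\,\varphi^{\ast\ast}(\nabla v)\,dx+\frac{1}{2\tau}\|v-u_{k-1}\|_{L^{2}(\Omega)}^{2}+\frac{\lambda}{2}\int_{\Omega}\frac{(v-f)^{2}}{w(x,t_{k})^{2}}\,dx.
$$
The relaxation theorem supplies a gradient Young measure $(\nu_{k,x})_{x}$ with $\nabla u_{k}(x)=\int_{\mathbb{R}^{n}}\xi\,d\nu_{k,x}(\xi)$ and $\varphi^{\ast\ast}(\nabla u_{k}(x))=\int_{\mathbb{R}^{n}}\varphi(\xi)\,d\nu_{k,x}(\xi)$ a.e., supported on $\{\varphi=\varphi^{\ast\ast}\}$ inside one affine face of $\varphi^{\ast\ast}$ through $\nabla u_{k}(x)$; hence $\int_{\mathbb{R}^{n}}q(\xi)\,d\nu_{k,x}(\xi)=(\varphi^{\ast\ast})'(\nabla u_{k}(x))$, and the Euler--Lagrange equation is, in the weak Neumann sense,
$$
\frac{u_{k}-u_{k-1}}{\tau}=\operatorname{div}\Bigl(\alpha(x)\int_{\mathbb{R}^{n}}q(\xi)\,d\nu_{k,\cdot}(\xi)\Bigr)-\lambda\frac{u_{k}-f}{w(\cdot,t_{k})^{2}}.
$$
Testing with $(u_{k}-M)^{+}$ and with $-(m-u_{k})^{+}$, and using $(\varphi^{\ast\ast})'(\xi)\cdot\xi\ge 0$, the inductive bound $u_{k-1}\in[m,M]$, and the sign of $u_{k}-f$ on the corresponding sets, gives $m\le u_{k}\le M$ for all $k$. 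Testing with $u_{k}-u_{k-1}$, using convexity of $\varphi^{\ast\ast}$ and (\ref{structure-condition}), and absorbing the now-bounded reaction contribution, yields the energy bound
$$
\max_{1\le j\le N}\int_{\Omega}\alpha(x)\,\varphi^{\ast\ast}(\nabla u_{j})\,dx+\sum_{k=1}^{N}\frac{1}{\tau}\|u_{k}-u_{k-1}\|_{L^{2}(\Omega)}^{2}\le C,
$$
with $C$ independent of $N$ and of $w\in K$; thus the piecewise interpolants $u^{(N)}$ stay bounded in $L^{p}(0,T;W^{1,p}(\Omega))\cap W^{1,2}(0,T;L^{2}(\Omega))$.

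\smallskip
\noindent\emph{Step 2: limit and fixed point.}
Letting $N\to\infty$, Aubin--Lions compactness gives $u^{(N)}\to u_{w}$ strongly in $L^{2}(Q_{T})$ along a subsequence and the fundamental theorem of Young measures gives $\nu^{(N)}\rightharpoonup\nu^{w}$; the limit $(u_{w},\nu^{w})$ is a Young measure solution of the frozen problem with $u_{w}\in K$, and, adapting the argument of \cite{demoulini1996} to the weighted, Neumann, reaction setting, $\nu^{w}$ enjoys the independence property
$$
\int_{\mathbb{R}^{n}}q(\xi)\cdot\xi\,d\nu^{w}_{x,t}(\xi)=\int_{\mathbb{R}^{n}}q(\xi)\,d\nu^{w}_{x,t}(\xi)\cdot\int_{\mathbb{R}^{n}}\xi\,d\nu^{w}_{x,t}(\xi)
$$
and $\int_{\mathbb{R}^{n}}q(\xi)\,d\nu^{w}_{x,t}(\xi)=(\varphi^{\ast\ast})'(\nabla u_{w}(x,t))$ a.e. For two solutions of the frozen problem with these properties, testing the difference makes the diffusion term $\le 0$ (convexity of $\varphi^{\ast\ast}$) and the reaction term $-\lambda\int(u_{w}^{1}-u_{w}^{2})^{2}/w^{2}\le 0$, so $u_{w}$ is unique; set $\mathcal S:K\to K$, $\mathcal Sw=u_{w}$. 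Running the same energy computation for two data $w_{1},w_{2}$ shows $\mathcal S$ is Lipschitz into $C([0,T];L^{2}(\Omega))$, while the uniform bound makes $\mathcal S(K)$ precompact there, so $\mathcal S$ is continuous and compact; Schauder's fixed-point theorem then yields $u=\mathcal Su\in K$, which with its Young measure $\nu$ is a Young measure solution of (\ref{P1})--(\ref{P3}) inheriting the independence property and the bounds $m\le u\le M$.

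\smallskip
\noindent\emph{Step 3: uniqueness, continuous dependence, and the crux.}
For Young measure solutions $u_{1},u_{2}$ with admissible data $f_{1},f_{2}$, Young measures $\nu^{1},\nu^{2}$, set $m=\min\{\inf f_{1},\inf f_{2}\}>0$, $M=\max\{\|f_{1}\|_{\infty},\|f_{2}\|_{\infty}\}$, so $u_{i}\in[m,M]$. Testing the difference of the two equations with $u_{1}-u_{2}$ and using $\int q\,d\nu^{i}=(\varphi^{\ast\ast})'(\nabla u_{i})$, $\nabla u_{i}=\int\xi\,d\nu^{i}$, the diffusion term is
$$
-\int_{\Omega}\alpha(x)\bigl((\varphi^{\ast\ast})'(\nabla u_{1})-(\varphi^{\ast\ast})'(\nabla u_{2})\bigr)\cdot\nabla(u_{1}-u_{2})\,dx\le 0,
$$
and the reaction difference is split as $\frac{u_{1}-f_{1}}{u_{1}^{2}}-\frac{u_{2}-f_{2}}{u_{2}^{2}}=\bigl(\frac{u_{1}-f_{1}}{u_{1}^{2}}-\frac{u_{2}-f_{1}}{u_{2}^{2}}\bigr)+\frac{f_{2}-f_{1}}{u_{2}^{2}}$, the first bracket being Lipschitz in $u$ on $[m,M]$. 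This gives $\frac{d}{dt}\|u_{1}-u_{2}\|_{L^{2}(\Omega)}^{2}\le C\|u_{1}-u_{2}\|_{L^{2}(\Omega)}^{2}+C\|f_{1}-f_{2}\|_{L^{2}(\Omega)}^{2}$, and Gronwall's inequality yields $\|u_{1}(t)-u_{2}(t)\|_{L^{2}(\Omega)}\le C(T)\|f_{1}-f_{2}\|_{L^{2}(\Omega)}$ on $[0,T]$ — uniqueness for $f_{1}=f_{2}$, $L^{2}$ continuous dependence in general. The hard part will be Step 2: showing that the independence property and the identification $\int q\,d\nu^{w}=(\varphi^{\ast\ast})'(\nabla u_{w})$ survive the merely weak convergence of $\nabla u^{(N)}$ — this is exactly what makes the dissipativity in Step 3 available — together with the positive lower bound on $u$ that keeps $1/u^{2}$ harmless, which is why only the denominator gets frozen.
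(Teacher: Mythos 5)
Your route is the paper's route: freeze only the denominator ($u^{2}\mapsto w^{2}$), solve the frozen problem $(\mathcal P_w)$ by Rothe's scheme applied to the relaxed one-step functional, extract a Young-measure limit carrying the independence property, run Schauder's theorem on $w\mapsto u_w$, and get uniqueness and $L^{2}$ continuous dependence by testing with the difference and Gronwall. The small variants you use (maximum-principle test functions $(u_k-M)^{+}$ instead of the paper's truncation $T_l^d$; a direct Gronwall/Lipschitz estimate for the continuity of the fixed-point map in $C([0,T];L^{2})$ instead of the paper's compactness-plus-uniqueness argument) are harmless.

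There is, however, one concrete gap. In Step 3 (and in the uniqueness claim for the frozen problem) you make the dissipativity of the diffusion difference rest on the identification $\langle\nu^{i},q\rangle=(\varphi^{**})'(\nabla u_{i})$. That identity is not part of the definition of a Young measure solution, and --- as you yourself note at the end --- it is only available at the Rothe level; it is never shown to survive the weak limits, and the paper never claims it for the limit measures. Since uniqueness must hold for arbitrary Young measure solutions in the sense of the definition, the step is unjustified as written. The paper obtains the sign using only the independence property (\ref{independence-for-u}) and the support condition (\ref{support-of-u}) for \emph{both} measures: expanding with the barycenter representation,
\begin{align*}
\left(\langle\nu^{1},q\rangle-\langle\nu^{2},q\rangle\right)\cdot\nabla(u_{1}-u_{2})
=\int_{\mathbb{R}^n}\int_{\mathbb{R}^n}\left(q^{**}(\lambda)-q^{**}(\sigma)\right)\cdot(\lambda-\sigma)\,d\nu^{1}(\lambda)\,d\nu^{2}(\sigma)\ge 0
\end{align*}
by monotonicity of $q^{**}=\nabla\varphi^{**}$, with no pointwise identification needed. (Your identification can in fact be rescued: independence applied to $\nu^{i}\otimes\nu^{i}$ forces $(q^{**}(\lambda)-q^{**}(\sigma))\cdot(\lambda-\sigma)=0$ a.e., hence $q^{**}$ is $\nu^{i}$-a.e.\ constant, and Jensen's inequality for the convex $\varphi^{**}$ pins the constant to $q^{**}(\nabla u_{i})$ --- but you would have to supply that argument.) Finally, the ``hard part'' you defer --- that independence survives the limit --- is precisely the main technical content of the existence proof; the paper carries it out, following \cite{demoulini1996}, by testing the discrete and limit equations with $v_m\psi$, using the strong $L^{2}$ convergence of $v_m$ and the $L^{2}$ bound on $\partial_t u_m$ to get $\langle\nu^{m},q\rangle\cdot\langle\nu^{m},\mathrm{id}\rangle\rightharpoonup\langle\nu^{w},q\rangle\cdot\langle\nu^{w},\mathrm{id}\rangle$ in $L^{1}$, and combining this with the biting convergence of $\langle\nu^{m},q\cdot\mathrm{id}\rangle$; citing the adaptation is the right idea, but it is the substance rather than a footnote of the proof.
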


The rest of this paper is organized as follows. In Sect. \ref{preliminaries}, we establish an auxiliary problem related to (\ref{P1})--(\ref{P3}) and recall some preliminaries on Young measure. Sect. \ref{proof-of-P_w} is devoted to showing the existence of Young measure solutions to the auxiliary problem. In Sect. \ref{proof-of-main-thm}, we present the proof of Theorem \ref{thirdtheorem}. In Sect. \ref{Numerical experiments}, we show some numerical results
to demonstrate the ability of denoising by our model.
\section{Preliminaries}\label{preliminaries}
In this section, we recall some useful definitions and results related to Young measure.
We denote by $C_0(\mathbb{R}^d)$ the closure of the set of continuous functions on $\mathbb{R}^d$ with compact support. The dual of $C_0(\mathbb{R}^d)$ can be identified with the space $\mathcal{M}(\mathbb{R}^d)$ of signed Radon measures with finite mass via the pairing
$$
\left\langle\mu,f\right\rangle=\int_{\mathbb{R}^d}fd\mu,\quad f\in C_0(\mathbb{R}^d),~\mu\in\mathcal{M}(\mathbb{R}^d).
$$
Let $D\subseteq\mathbb{R}^d$ be a measurable set with finite measure, denoted by $\text{meas}(D)$. A map $\nu:D\rightarrow\mathcal{M}(\mathbb{R}^d)$ is called weakly$^\ast$ measurable if
$$
\left\langle\nu,f\right\rangle:D\rightarrow\mathbb{R}, x\mapsto\int_{\mathbb{R}^d}fd\nu_x
$$
is measurable for each $f\in C_0(\mathbb{R}^d)$, where $\nu_x=\nu(x)$. 


\begin{lem}\!\emph{(Fundamental theorem on Young measure)}\!\emph{\cite{ball1989}}\label{Fundamental-Theorem-on-YoungMeasure} Let $z_j:D\rightarrow\mathbb{R}^d$ \emph{($j=1,2,\cdots$)} be a sequence of measurable functions. Then there exists a
subsequence $\{z_{j_k}\}_{k=1}^{\infty}$ and a weakly$^\ast$ measurable map $\nu:D\rightarrow\mathcal{M}(\mathbb{R}^d)$ such that
    \begin{itemize}
        \item [(i)] $\nu(x)\geq 0$, $\left\|\nu(x)\right\|_{\mathcal{M}(\mathbb{R}^d)}=\int_{\mathbb{R}^d}d\nu_x\leq 1$ a.e. $x\in D$\emph{;}

        \item [(ii)] $\varphi(z_{j_k})$ converges weakly $\ast$ to $\left\langle\nu,\varphi\right\rangle$ in $L^{\infty}(D)$ for any $\varphi\in C_0(\mathbb{R}^d)$\emph{;}

        \item [(iii)] Furthermore, one has $\left\|\nu(x)\right\|_{\mathcal{M}(\mathbb{R}^d)}=1$ a.e. $x\in D$ if and only if
	$$
	\lim_{L\rightarrow\infty}\sup_{k\in\mathbb{N}}\emph{meas} \left\{x\in D:\left|z_{j_k}(x)\right|\geq L\right\}=0.
	$$
    \end{itemize}
\end{lem}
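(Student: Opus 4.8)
The statement to prove is the fundamental theorem on Young measures (Lemma \ref{Fundamental-Theorem-on-YoungMeasure}). The plan is to view each $z_j$ as an ``elementary'' measure-valued map $x\mapsto\delta_{z_j(x)}$, extract a weak$^\ast$ limit by a separability argument, and then read off (i)--(iii) from the definition of weak$^\ast$ convergence. The key functional-analytic input is the identification of the space of (equivalence classes of) weakly$^\ast$ measurable maps $\nu:D\to\mathcal{M}(\mathbb{R}^d)$ with $\operatorname*{ess\,sup}_{x\in D}\|\nu_x\|_{\mathcal{M}(\mathbb{R}^d)}<\infty$ as the dual of the Banach space $L^1(D;C_0(\mathbb{R}^d))$, the pairing being $\langle\nu,g\rangle=\int_D\int_{\mathbb{R}^d}g(x,\xi)\,d\nu_x(\xi)\,dx$. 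Since $\text{meas}(D)<\infty$ and $C_0(\mathbb{R}^d)$ is separable, $L^1(D;C_0(\mathbb{R}^d))$ is separable.

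First I would set $\nu^j_x:=\delta_{z_j(x)}$. For each $\varphi\in C_0(\mathbb{R}^d)$ the map $x\mapsto\langle\nu^j_x,\varphi\rangle=\varphi(z_j(x))$ is measurable (composition of continuous and measurable functions), so $\nu^j$ is weakly$^\ast$ measurable, and $\|\nu^j_x\|_{\mathcal{M}(\mathbb{R}^d)}=1$ for a.e.\ $x$, so $\{\nu^j\}$ is bounded in $(L^1(D;C_0(\mathbb{R}^d)))^\ast$. By the sequential Banach--Alaoglu theorem (applicable because the predual is separable) there is a subsequence $\{z_{j_k}\}$ and a weakly$^\ast$ measurable $\nu:D\to\mathcal{M}(\mathbb{R}^d)$ with $\nu^{j_k}\rightharpoonup^\ast\nu$. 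Property (ii) is then just unwinding this: for $\psi\in L^1(D)$ and $\varphi\in C_0(\mathbb{R}^d)$ the function $(x,\xi)\mapsto\psi(x)\varphi(\xi)$ lies in $L^1(D;C_0(\mathbb{R}^d))$, and testing the convergence against it gives $\int_D\psi(x)\varphi(z_{j_k}(x))\,dx\to\int_D\psi(x)\langle\nu_x,\varphi\rangle\,dx$, i.e.\ $\varphi(z_{j_k})\rightharpoonup^\ast\langle\nu,\varphi\rangle$ in $L^\infty(D)$. For (i): choosing $\psi\ge0$ and $\varphi\ge0$ and using $\varphi(z_{j_k})\ge0$ forces $\langle\nu_x,\varphi\rangle\ge0$ a.e., hence, testing against a countable dense subset of the nonnegative elements of $C_0(\mathbb{R}^d)$, $\nu_x\ge0$ a.e.; and $\langle\nu_x,\varphi\rangle\le\|\varphi\|_{\infty}$ a.e.\ gives $\|\nu_x\|_{\mathcal{M}(\mathbb{R}^d)}=\sup\{\langle\nu_x,\varphi\rangle:\varphi\in C_0(\mathbb{R}^d),\ \|\varphi\|_\infty\le1\}\le1$ a.e.

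For (iii) I would introduce cutoffs $\varphi_L\in C_0(\mathbb{R}^d)$ with $0\le\varphi_L\le1$, $\varphi_L\equiv1$ on the ball $B_L$, $\operatorname{supp}\varphi_L\subset B_{L+1}$, so that $\chi_{\{|\xi|\ge L+1\}}\le 1-\varphi_L(\xi)\le\chi_{\{|\xi|\ge L\}}$. Taking $\psi\equiv1$ in (ii) gives $\int_D\varphi_L(z_{j_k})\,dx\to\int_D\langle\nu_x,\varphi_L\rangle\,dx$ as $k\to\infty$, and since $\nu_x\ge0$ we have $\langle\nu_x,\varphi_L\rangle\uparrow\|\nu_x\|_{\mathcal{M}(\mathbb{R}^d)}$ as $L\to\infty$, so by monotone convergence $\int_D\langle\nu_x,\varphi_L\rangle\,dx\to\int_D\|\nu_x\|_{\mathcal{M}(\mathbb{R}^d)}\,dx$. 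If the tightness condition holds, then $\int_D\varphi_L(z_{j_k})\,dx\ge\text{meas}(D)-\sup_k\text{meas}\{|z_{j_k}|\ge L\}$; letting $k\to\infty$ and then $L\to\infty$ yields $\int_D\|\nu_x\|_{\mathcal{M}(\mathbb{R}^d)}\,dx\ge\text{meas}(D)$, which together with $\|\nu_x\|_{\mathcal{M}(\mathbb{R}^d)}\le1$ a.e.\ forces $\|\nu_x\|_{\mathcal{M}(\mathbb{R}^d)}=1$ a.e. Conversely, if $\|\nu_x\|_{\mathcal{M}(\mathbb{R}^d)}=1$ a.e., then given $\varepsilon>0$ pick $L_0$ with $\int_D(1-\langle\nu_x,\varphi_{L_0}\rangle)\,dx<\varepsilon$; from $\text{meas}\{|z_{j_k}|\ge L_0+1\}\le\int_D(1-\varphi_{L_0}(z_{j_k}))\,dx$ and the convergence above, $\limsup_k\text{meas}\{|z_{j_k}|\ge L_0+1\}<\varepsilon$, and enlarging $L_0$ to absorb the finitely many remaining indices (each $z_{j_k}$ is finite a.e., so $\text{meas}\{|z_{j_k}|\ge L\}\to0$ as $L\to\infty$) gives $\sup_k\text{meas}\{|z_{j_k}|\ge L\}<\varepsilon$ for $L$ large; since $\varepsilon$ is arbitrary this is the tightness condition.

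The main obstacle is the functional-analytic identification $(L^1(D;C_0(\mathbb{R}^d)))^\ast\cong L^\infty_{w^\ast}(D;\mathcal{M}(\mathbb{R}^d))$, which is what guarantees that the abstract weak$^\ast$ limit genuinely has the disintegrated form $x\mapsto\nu_x$ with the asserted weak$^\ast$ measurability; one either cites it or proves it via a lifting/disintegration argument. The only other delicate point is the bookkeeping in the converse half of (iii), where one must upgrade a $\limsup_k$ bound to a genuine $\sup_k$ bound by separately handling the finitely many small indices.
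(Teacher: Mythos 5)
The paper does not prove this lemma: it is quoted verbatim as a known result with a citation to Ball (1989), so there is no internal proof to compare against. Your argument is, in substance, the standard proof from that reference, and it is correct. Realizing each $z_j$ as the elementary Young measure $x\mapsto\delta_{z_j(x)}$, invoking the duality $\left(L^1(D;C_0(\mathbb{R}^d))\right)^\ast\cong L^\infty_{w^\ast}(D;\mathcal{M}(\mathbb{R}^d))$ together with sequential Banach--Alaoglu over the separable predual, and then reading off (i) and (ii) by testing against products $\psi(x)\varphi(\xi)$ is exactly how the result is obtained; your cutoff argument for (iii), including the monotone-convergence step $\langle\nu_x,\varphi_L\rangle\uparrow\|\nu_x\|_{\mathcal{M}(\mathbb{R}^d)}$ and the upgrade from $\limsup_k$ to $\sup_k$ by enlarging $L$ to handle finitely many indices, is also sound. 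Two points you correctly flag but should not leave entirely implicit if this were written out in full: the duality theorem itself (which carries the weak$^\ast$ measurability of the limit and the identification of the dual norm with $\operatorname{ess\,sup}_x\|\nu_x\|_{\mathcal{M}(\mathbb{R}^d)}$) must be cited or proved via a lifting/disintegration argument, and in part (i) the exceptional null sets depend on the test function $\varphi$, so the passage to a single null set requires running the argument over a countable dense family of nonnegative elements of $C_0(\mathbb{R}^d)$, as you indicate.
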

\begin{defn}
	The map $\nu:D\rightarrow\mathcal{M}(\mathbb{R}^d)$ in Lemma \ref{Fundamental-Theorem-on-YoungMeasure} is called the Young measure in $\mathbb{R}^d$ generated by the sequence $\{z_{j_k}\}_{k=1}^{\infty}$.
\end{defn}
For $p \geq 1$, we define the spaces as follows:
$$
\mathcal{E}_0^p(\mathbb{R}^d)=\left\{\varphi \in C(\mathbb{R}^d): \lim _{\left|\xi\right|\rightarrow+\infty} \frac{|\varphi(\xi)|}{1+|\xi|^p} \text { exists }\right\},
$$
$$
\mathcal{E}^p(\mathbb{R}^d)=\left\{\varphi \in C(\mathbb{R}^d): \sup _{\xi\in \mathbb{R}^d} \frac{|\varphi(\xi)|}{1+|\xi|^p}<+\infty\right\}.
$$
As noted in \cite{kinderlehrer1994}, $\mathcal{E}_0^p(\mathbb{R}^d)$ is a separable Banach space, while $\mathcal{E}^p(\mathbb{R}^d)$ is an inseparable space under the norm
$$
\left\|\varphi\right\|_{\mathcal{E}^p(\mathbb{R}^d)}=\sup _{\xi\in \mathbb{R}^d} \frac{|\varphi(\xi)|}{1+|\xi|^p},\quad \varphi\in\mathcal{E}^p(\mathbb{R}^d).
$$
\begin{defn}\!\!\!\cite{demoulini1996}\label{W1p-gradient-YoungMeasure}
A Young measure $\nu=\left(\nu_x\right)_{x \in D}$ on $\mathbb{R}^d$ is called a $W^{1,p}(D)$-gradient Young measure for some $p\geq 1$ if
    \begin{itemize}
    \item [(i)] 
    for each bounded continuous function $f$ in $\mathbb{R}^d$, $\left\langle\nu, f\right\rangle$ is measurable in $D$;
    \item [(ii)]
    there is a sequence of functions $\{u^k\}_{k=1}^{\infty}\subset W^{1, p}(D)$ for which the representation formula
	\begin{equation}
		\lim _{k \rightarrow \infty} \int_E \psi(\nabla u^k(x))dx=\int_E\left\langle \nu_x, \psi\right\rangle dx\label{representation-formula-YoungMeasure}
	\end{equation}
	holds for all measurable $E \subseteq D$ and all $\psi \in \mathcal{E}_0^p(\mathbb{R}^d)$. 
    \end{itemize}
	
	We also say $\nu$ the $W^{1, p}(D)$-gradient Young measure generated by $\{\nabla u^k\}_{k=1}^{\infty}$ and $\{\nabla u^k\}_{k=1}^{\infty}$ the $W^{1, p}(D)$-gradient generating sequence of $\nu$. In addition, the representation formula (\ref{representation-formula-YoungMeasure}) also holds for $\psi\in\mathcal{E}^p(\mathbb{R}^d)$. 
\end{defn}
\begin{defn}\!\!\!\cite{ballmurat1989}\label{convergence-in-the-sense-of-biting}
	Let $\{z^k\}_{k=1}^{\infty}\subset L^1(D)$ and $z\in L^1(D)$. We say that $\{z^k\}_{k=1}^{\infty}$ converges to $z$ in the biting sense if there is a decreasing sequence of
	subsets $E_{j+1}\subseteq E_j$ of $D$ with $\lim_{j\rightarrow\infty}\emph{meas}(E_j)=0$ such that $\{z^k\}_{k=1}^{\infty}$ converges weakly to $z$ in $L^1(D\backslash E_j)$ for all $j$.
\end{defn}
Kinderlehrer and Pedregal showed a property that characterizes $W^{1,p}$-gradient Young measures, as detailed in the following lemma.
\begin{lem}\!\!\!\emph{\cite{kinderlehrer1994}}
	Let $\nu=(\nu_x)_{x\in D}$ be a Young measure on $\mathbb{R}^d$. Then $\nu=(\nu_x)_{x\in D}$ is a $W^{1,p}(D)$-gradient Young measure if and only if
    \begin{itemize}
        \item [(i)] there exists $u\in W^{1,p}(D)$ such that 
	$$
	\nabla u(x)=\left\langle\nu_x,\emph{id}\right\rangle,\quad \emph{a.e.}~x\in D,
	$$
	where $\emph{id}$ is the unit mapping in $\mathbb{R}^d$\emph{;}

        \item [(ii)] Jensen's inequality
	$$
	\phi(\nabla u(x))\leq\left\langle\nu_x,\phi\right\rangle,\quad \emph{a.e.}~x\in D,
	$$
	holds for all $\phi\in\mathcal{E}^p(\mathbb{R}^d)$ continuous, quasiconvex, and bounded below\emph{;}

        \item [(iii)] $\left\langle\nu_x,\phi_p\right\rangle\in L^1(D)$, where $\phi_p(\xi)=\left|\xi\right|^p$, $\xi\in\mathbb{R}^d$.
    \end{itemize}
\end{lem}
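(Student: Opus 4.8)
\emph{Sketch of proof.} The plan is to establish the two implications separately, following Kinderlehrer and Pedregal. For the necessity — that a $W^{1,p}(D)$-gradient Young measure satisfies (i)--(iii) — I would start from a $W^{1,p}$-gradient generating sequence $\{\nabla u^k\}$ of $\nu$ and first note that it is bounded in $L^p(D;\mathbb{R}^d)$: applying the representation formula (\ref{representation-formula-YoungMeasure}), extended to $\mathcal{E}^p(\mathbb{R}^d)$, to $\psi=\phi_p$ yields both this bound and item (iii). Passing to a subsequence, $\nabla u^k\rightharpoonup v$ weakly in $L^p(D;\mathbb{R}^d)$; since $v$ is a distributional limit of gradients it is curl-free, hence $v=\nabla u$ for some $u\in W^{1,p}(D)$, and testing (\ref{representation-formula-YoungMeasure}) against the coordinate maps $\xi\mapsto\xi_i\in\mathcal{E}^p(\mathbb{R}^d)$ identifies $v(x)=\langle\nu_x,\mathrm{id}\rangle$ a.e., which is (i). For (ii), fix a continuous quasiconvex $\phi\in\mathcal{E}^p(\mathbb{R}^d)$ bounded below; (\ref{representation-formula-YoungMeasure}) gives $\int_E\langle\nu_x,\phi\rangle\,dx=\lim_k\int_E\phi(\nabla u^k)\,dx$ for every measurable $E\subseteq D$, while the weak lower semicontinuity of quasiconvex integrands of $p$-growth (Morrey, Acerbi--Fusco) gives $\liminf_k\int_E\phi(\nabla u^k)\,dx\ge\int_E\phi(\nabla u)\,dx$; as $E$ is arbitrary, the pointwise inequality $\phi(\nabla u(x))\le\langle\nu_x,\phi\rangle$ follows a.e.

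The sufficiency is the substantial direction, and I would reduce it to a homogeneous building block. Suppose first $\nu_x\equiv\mu$ is independent of $x$, with barycenter $\bar\xi:=\langle\mu,\mathrm{id}\rangle$, with $\langle\mu,\phi_p\rangle<\infty$, and such that Jensen's inequality $\phi(\bar\xi)\le\langle\mu,\phi\rangle$ holds for every continuous quasiconvex $\phi$ of $p$-growth bounded below. The claim is that such a $\mu$ is generated by gradients $\nabla w^k$ with $w^k-\bar\xi\cdot x\in W_0^{1,p}(Q)$ on a fixed cube $Q$. I would prove this by a Hahn--Banach separation in a suitable subspace of $\mathcal{M}(\mathbb{R}^d)$: the set of homogeneous $W^{1,p}$-gradient Young measures with barycenter $\bar\xi$ is convex and weakly$^\ast$ closed, so if $\mu$ were not in it there would be a continuous function $g$ of $p$-growth strictly separating it; replacing $g$ by its quasiconvex envelope, and controlling the growth of the envelope by a truncation of the admissible competitors, would produce a quasiconvex function violating Jensen's inequality at $\bar\xi$, a contradiction. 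The truncation step — approximating $W^{1,p}$ competitors by Lipschitz ones with arbitrarily small error, in the spirit of Acerbi--Fusco — is precisely what is needed here because $\phi_p\notin\mathcal{E}_0^p(\mathbb{R}^d)$ and $\mathcal{E}^p(\mathbb{R}^d)$ is non-separable.

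For a general $\nu$ satisfying (i)--(iii), I would then localize. Since $x\mapsto\nu_x$ is weakly$^\ast$ measurable and $\nabla u\in L^p(D)$, a Lusin/Scorza--Dragoni argument lets me decompose $D$, up to a set of small measure, into fine cubes on each of which $\nabla u$ is nearly constant; on a cube $Q_i$ I replace $\nu_x$ by the homogeneous measure $\mu_i$ with barycenter equal to that value and invoke the homogeneous case to obtain on $Q_i$ a generating sequence of gradients agreeing with the affine datum on $\partial Q_i$. Gluing these pieces across cubes — the matched boundary values keep the glued function in $W^{1,p}$ — and diagonalizing over the mesh size produces a sequence $\{\nabla v^k\}\subset L^p(D;\mathbb{R}^d)$ whose Young measure is arbitrarily close to $\nu$ in the weak topology induced by the separable space $\mathcal{E}_0^p(\mathbb{R}^d)$; a final diagonal extraction then exhibits $\nu$ itself as the $W^{1,p}(D)$-gradient Young measure generated by a single sequence.

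The step I expect to be the main obstacle is the homogeneous case: showing by duality that Jensen's inequality against all quasiconvex functions already forces $\mu$ to be a gradient Young measure, together with the Acerbi--Fusco type truncation needed to reconcile the $p$-growth of the admissible test functions with the mere $L^p$-boundedness (rather than higher equi-integrability) of generating sequences. The localization and gluing, while technically delicate — in particular enforcing the curl-free constraint across cube interfaces — is conceptually routine once the homogeneous block is in hand. For $p=1$ one should replace weak $L^1$ convergence by biting convergence (Definition \ref{convergence-in-the-sense-of-biting}) throughout, and the argument then proceeds along the same lines.
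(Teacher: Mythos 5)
This lemma is quoted from Kinderlehrer--Pedregal \cite{kinderlehrer1994}; the paper itself supplies no proof, so there is nothing internal to compare your argument against. Your sketch does reproduce the standard KP strategy faithfully: necessity via the representation formula applied to $\phi_p$ and to the coordinate maps together with weak lower semicontinuity of quasiconvex integrands of $p$-growth, and sufficiency via the homogeneous case (Hahn--Banach separation, with the key identity that the infimum of $\langle\lambda,g\rangle$ over homogeneous gradient Young measures of barycenter $\bar\xi$ is the quasiconvexification $Qg(\bar\xi)$, so that Jensen's inequality for $Qg$ yields the contradiction) followed by localization on fine cubes, gluing with matched affine boundary data, and a diagonal extraction in the separable topology of $\mathcal{E}_0^p(\mathbb{R}^d)$. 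The only points I would flag are (a) the passage from the integral inequality over all measurable $E$ to the pointwise a.e.\ Jensen inequality needs lower semicontinuity on arbitrary measurable sets (or a Lebesgue-point blow-up), not just on open sets, and (b) in the separation step one must verify that the quasiconvex envelope of the separating function remains in $\mathcal{E}^p(\mathbb{R}^d)$ and bounded below; both are handled in \cite{kinderlehrer1994} and are consistent with the level of detail of a sketch.
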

\begin{lem}\!\!\!\emph{\cite{kinderlehrer1992}}\label{lower-semicontinuous-for-quasiconvex}
	Suppose $g\in\mathcal{E}^p(\mathbb{R}^d)$, for $p\geq 1$, is quasiconvex and bounded below and let $\{u^k\}_{k=1}^{\infty}$ converge weakly to $u$ in $W^{1, p}(D)$. Then,
	\begin{itemize}
	    \item [(i)] for all measurable $E\subseteq D$,
	$$
	\int_E g(\nabla u)dx\leq\liminf_{k\rightarrow\infty}\int_E g(\nabla u^k)dx;
	$$

            \item [(ii)] if, in addition,
	$$
	\lim_{k\rightarrow\infty} \int_D g(\nabla u^k)dx=\int_D g(\nabla u)dx,
	$$
	then $\{g(\nabla u^k)\}_{k=1}^{\infty}$ are weakly sequentially precompact in $L^1(D)$ and the sequence converges weakly to $g(\nabla u)$.
	\end{itemize}
\end{lem}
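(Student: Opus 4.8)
The plan is to derive both parts from the Young measure apparatus recorded above --- principally the fundamental theorem (Lemma~\ref{Fundamental-Theorem-on-YoungMeasure}) and the Kinderlehrer--Pedregal characterization of $W^{1,p}(D)$-gradient Young measures. First I would reduce to the case $g\geq 0$: replacing $g$ by $g-\inf_{\mathbb{R}^d}g$ preserves quasiconvexity and membership in $\mathcal{E}^p(\mathbb{R}^d)$ and changes every integral occurring below by the same finite constant times the measure of the set of integration, so neither (i) nor (ii) is affected. Since $g\in\mathcal{E}^p(\mathbb{R}^d)$ there is $C>0$ with $0\leq g(\xi)\leq C(1+|\xi|^p)$, and because $u^k\rightharpoonup u$ in $W^{1,p}(D)$ keeps $\{\nabla u^k\}$ bounded in $L^p(D;\mathbb{R}^d)$, we get $\sup_k\int_D g(\nabla u^k)\,dx<\infty$; in particular the functions $g(\nabla u^k)$ below all lie in $L^1(D)$ with uniformly bounded norms.

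For (i), set $\ell:=\liminf_{k\to\infty}\int_E g(\nabla u^k)\,dx$ and pass to a subsequence along which $\int_E g(\nabla u^k)\,dx\to\ell$. By Lemma~\ref{Fundamental-Theorem-on-YoungMeasure}, along a further subsequence (not relabelled) $\{\nabla u^k\}$ generates a Young measure $\nu=(\nu_x)_{x\in D}$; the estimate $\mathrm{meas}\{x\in D:|\nabla u^k(x)|\geq L\}\leq L^{-p}\sup_k\|\nabla u^k\|_{L^p(D)}^p$ verifies criterion~(iii) of that lemma, so every $\nu_x$ is a probability measure, and a routine truncation argument based on $\nabla u^k\rightharpoonup\nabla u$ in $L^p(D;\mathbb{R}^d)$ identifies the barycenter $\langle\nu_x,\mathrm{id}\rangle=\nabla u(x)$ a.e.; after modifying $\{\nabla u^k\}$ on sets of vanishing measure by the standard equi-integrability (decomposition) lemma if needed, $\nu$ is a $W^{1,p}(D)$-gradient Young measure in the sense of Definition~\ref{W1p-gradient-YoungMeasure}. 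Since $g$ is continuous, quasiconvex, bounded below and lies in $\mathcal{E}^p(\mathbb{R}^d)$, the characterization lemma then yields Jensen's inequality $g(\nabla u(x))=g(\langle\nu_x,\mathrm{id}\rangle)\leq\langle\nu_x,g\rangle$ for a.e.\ $x$. Finally pick $\varphi_L\in C_0(\mathbb{R}^d)$ with $0\leq\varphi_L\leq g$ and $\varphi_L\uparrow g$ pointwise as $L\to\infty$ (e.g.\ $\varphi_L=\theta_L\min\{g,L\}$ with nested continuous cut-offs $\theta_L$); Lemma~\ref{Fundamental-Theorem-on-YoungMeasure}(ii) gives that $\varphi_L(\nabla u^k)$ converges weakly-$\ast$ in $L^\infty(D)$ to $\langle\nu,\varphi_L\rangle$, so testing against $\chi_E\in L^1(D)$ and using $\varphi_L\leq g$,
$$\int_E\langle\nu_x,\varphi_L\rangle\,dx=\lim_{k\to\infty}\int_E\varphi_L(\nabla u^k)\,dx\leq\liminf_{k\to\infty}\int_E g(\nabla u^k)\,dx=\ell .$$
Letting $L\to\infty$ and invoking monotone convergence (first inside each $\nu_x$, then in $x$) gives $\int_E\langle\nu_x,g\rangle\,dx\leq\ell$, and combining this with Jensen's inequality yields $\int_E g(\nabla u)\,dx\leq\ell$, which is (i).

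For (ii) I would first upgrade (i) to a localized equality. Applying (i) to $D\setminus E$ in place of $E$ and using $\int_E(\cdot)=\int_D(\cdot)-\int_{D\setminus E}(\cdot)$ together with the hypothesis $\int_D g(\nabla u^k)\,dx\to\int_D g(\nabla u)\,dx$,
$$\limsup_{k\to\infty}\int_E g(\nabla u^k)\,dx=\lim_{k\to\infty}\int_D g(\nabla u^k)\,dx-\liminf_{k\to\infty}\int_{D\setminus E}g(\nabla u^k)\,dx\leq\int_E g(\nabla u)\,dx ,$$
so with (i) this gives $\int_E g(\nabla u^k)\,dx\to\int_E g(\nabla u)\,dx$ for \emph{every} measurable $E\subseteq D$. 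Hence the finite positive measures $\mu_k$ on $D$ defined by $\mu_k(A):=\int_A g(\nabla u^k)\,dx$ satisfy $\mu_k(A)\to\int_A g(\nabla u)\,dx$ for every measurable $A\subseteq D$; by the Vitali--Hahn--Saks theorem the family $\{\mu_k\}$ is uniformly absolutely continuous with respect to Lebesgue measure, that is, $\{g(\nabla u^k)\}$ is equi-integrable, whence by the Dunford--Pettis theorem it is weakly sequentially precompact in $L^1(D)$. If $g(\nabla u^{k_j})\rightharpoonup h$ weakly in $L^1(D)$ along some subsequence, then $\int_A h\,dx=\lim_j\int_A g(\nabla u^{k_j})\,dx=\int_A g(\nabla u)\,dx$ for all measurable $A\subseteq D$, so $h=g(\nabla u)$ a.e.; since every weakly convergent subsequence has this same limit, the whole sequence $\{g(\nabla u^k)\}$ converges weakly to $g(\nabla u)$ in $L^1(D)$.

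The step I expect to be the main obstacle is the one inside (i) where information about the possibly unbounded, $p$-growth integrand $g$ must be extracted from weak-$\ast$ convergence of $\{\varphi(\nabla u^k)\}$ available only for bounded $\varphi$: this is precisely where the normalization $g\geq0$, the monotone approximation $\varphi_L\uparrow g$, the tightness estimate, and --- crucially --- the fact that the Young measure generated by $\{\nabla u^k\}$ is a genuine $W^{1,p}(D)$-gradient Young measure (so that the characterization lemma's Jensen inequality applies, possibly only after the decomposition lemma) all have to be combined. If one preferred not to rely on that characterization, this is exactly the point at which the blow-up (Fonseca--M\"uller) proof of weak lower semicontinuity for quasiconvex integrands would have to be carried out directly. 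In part (ii) the one genuine subtlety is passing from ``$\mu_k(A)\to\mu(A)$ for each fixed $A$'' to \emph{uniform} absolute continuity, which is exactly what the Vitali--Hahn--Saks theorem provides.
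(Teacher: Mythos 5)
The paper does not prove this lemma at all: it is imported verbatim from Kinderlehrer--Pedregal \cite{kinderlehrer1992} (with part (i) going back to Acerbi--Fusco \cite{acerbi1984}), so there is no in-paper argument to compare against. Your reconstruction is essentially correct and follows the standard Young-measure route: the normalization $g\geq 0$, the tightness estimate giving probability measures $\nu_x$, the monotone approximation $\varphi_L\uparrow g$ by $C_0$ functions to obtain $\int_E\langle\nu_x,g\rangle\,dx\leq\liminf_k\int_E g(\nabla u^k)\,dx$, the localization trick $\int_E=\int_D-\int_{D\setminus E}$ to upgrade global convergence of energies to convergence on every measurable set, and Vitali--Hahn--Saks plus Dunford--Pettis for equi-integrability and weak $L^1$ compactness in (ii) are all sound. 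The one point worth being candid about is that your part (i) is not an independent proof but a reduction to the necessity direction of the Kinderlehrer--Pedregal characterization (the paper's Lemma on gradient Young measures, item (ii)): the Jensen inequality $g(\langle\nu_x,\mathrm{id}\rangle)\leq\langle\nu_x,g\rangle$ for quasiconvex $g$ is essentially \emph{equivalent} to weak lower semicontinuity, so the deep content is outsourced to that black box --- which is legitimate given that the paper states that lemma, but means the ``main obstacle'' you identify would reappear in full if one unwound the citation; you acknowledge this. A second, minor technical point: to place $\nu$ under Definition~\ref{W1p-gradient-YoungMeasure} you need the representation formula for all $\psi\in\mathcal{E}_0^p$, which requires the decomposition (equi-integrability) lemma rather than a modification ``on sets of vanishing measure'' --- the corrected sequence agrees with the original off sets of small but a priori nonzero measure; since your actual estimate in (i) only uses $C_0$ test functions and monotone convergence, this does not affect the conclusion.
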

\begin{lem}\!\!\!\emph{\cite{kinderlehrer1992}}\label{criterion-of-W1p-gradient-YoungMeasure}
	Let $g$ and $\{u^k\}_{k=1}^{\infty}$ be as in Lemma \ref{lower-semicontinuous-for-quasiconvex} \emph{(ii)} and assume in addition that
	$$
	\left(c\left|\xi\right|^p-1\right)^{+} \leq g(\xi) \leq C\left|\xi\right|^p+1
	$$
	for $0<c\leq C$. Let $\nu=(\nu_x)_{x\in D}$ be generated by the gradients $\{\nabla u^k\}_{k=1}^{\infty}$. Then $\nu$ is a $W^{1, p}\left(D\right)$-gradient Young measure.
\end{lem}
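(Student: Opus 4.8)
The plan is to verify directly, with $\{u^k\}$ itself serving as the generating sequence, the two defining properties of a $W^{1,p}(D)$-gradient Young measure in Definition \ref{W1p-gradient-YoungMeasure}. After relabeling, let $\{u^k\}$ denote the subsequence furnished by Lemma \ref{Fundamental-Theorem-on-YoungMeasure} along which $\nu$ is generated. Property (i) is routine: by definition of weak$^\ast$ measurability, $\langle\nu,f\rangle$ is measurable for $f\in C_0(\mathbb{R}^d)$, and for bounded continuous $f$ one approximates $f$ by its truncations at radius $L$ (which lie in $C_0(\mathbb{R}^d)$) and passes to the pointwise limit using $\|\nu_x\|_{\mathcal{M}(\mathbb{R}^d)}\le 1$. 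The substance is property (ii), namely the representation formula $\lim_{k\to\infty}\int_E\psi(\nabla u^k)\,dx=\int_E\langle\nu_x,\psi\rangle\,dx$ for every measurable $E\subseteq D$ and every $\psi\in\mathcal{E}_0^p(\mathbb{R}^d)$. This amounts to upgrading the $C_0$-type convergence in Lemma \ref{Fundamental-Theorem-on-YoungMeasure}(ii) to test functions of $p$-growth, and the engine for the upgrade is the equi-integrability of $\{|\nabla u^k|^p\}$.

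First I would secure that equi-integrability. Weak convergence of $\{u^k\}$ in $W^{1,p}(D)$ keeps $\{\nabla u^k\}$ bounded in $L^p(D)$. Since $g$ is quasiconvex, bounded below, lies in $\mathcal{E}^p(\mathbb{R}^d)$, and satisfies $\lim_k\int_D g(\nabla u^k)\,dx=\int_D g(\nabla u)\,dx$ by hypothesis, Lemma \ref{lower-semicontinuous-for-quasiconvex}(ii) shows $\{g(\nabla u^k)\}$ is weakly sequentially precompact — hence equi-integrable — in $L^1(D)$. The coercivity bound $c|\xi|^p-1\le g(\xi)$ gives $|\nabla u^k|^p\le\frac1c\bigl(g(\nabla u^k)+1\bigr)$, so $\{|\nabla u^k|^p\}$ inherits equi-integrability by domination. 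Consequently $\lim_{L\to\infty}\sup_k\operatorname{meas}\{x\in D:|\nabla u^k(x)|\ge L\}=0$, and Lemma \ref{Fundamental-Theorem-on-YoungMeasure}(iii) yields $\|\nu_x\|_{\mathcal{M}(\mathbb{R}^d)}=1$ for a.e.\ $x$; each $\nu_x$ is therefore a probability measure, and no mass escapes to infinity.

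Then I would run a truncation argument. Because $\nu_x$ is a probability measure with no escaping mass, a standard cutoff argument extends the weak$^\ast$ convergence of Lemma \ref{Fundamental-Theorem-on-YoungMeasure}(ii) from $C_0(\mathbb{R}^d)$ to all of $C_b(\mathbb{R}^d)$; applied to the bounded continuous functions $\min(|\xi|^p,N)$ and then sending $N\to\infty$ by monotone convergence, it gives $\langle\nu_x,|\xi|^p\rangle\in L^1(D)$ with $\int_D\langle\nu_x,|\xi|^p\rangle\,dx\le\sup_k\|\nabla u^k\|_{L^p(D)}^p$. For $\psi\in\mathcal{E}_0^p(\mathbb{R}^d)$ one has $|\psi(\xi)|\le A(1+|\xi|^p)$; given $\varepsilon>0$, choose $R$ with $|\psi(\xi)|\le\varepsilon(1+|\xi|^p)$ on $\{|\xi|\ge R\}$ and write $\psi=\psi\chi_R+\psi(1-\chi_R)$ with $\chi_R=1$ on $B_R$ and $\operatorname{supp}\chi_R\subseteq B_{2R}$. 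The bounded piece $\psi\chi_R\in C_0(\mathbb{R}^d)$ is handled by Lemma \ref{Fundamental-Theorem-on-YoungMeasure}(ii), while the two tails are bounded, uniformly in $k$ and $E$, by $\varepsilon\bigl(\sup_k\int_D(1+|\nabla u^k|^p)\,dx+\int_D\langle\nu_x,1+|\xi|^p\rangle\,dx\bigr)<\infty$. Sending $k\to\infty$ and then $\varepsilon\to0$ establishes the representation formula for all $\psi\in\mathcal{E}_0^p(\mathbb{R}^d)$ — so $\nu$ is a $W^{1,p}(D)$-gradient Young measure — and the same estimate with $A$ in place of $\varepsilon$ extends it to $\psi\in\mathcal{E}^p(\mathbb{R}^d)$. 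The same reasoning also produces the facts used downstream: testing with $\psi(\xi)=\xi_i\in\mathcal{E}_0^p(\mathbb{R}^d)$ (valid for $p>1$; for $p=1$ one uses equi-integrable truncations of $\xi_i$) and comparing with the weak $L^p$ limit of $\{\nabla u^k\}$ gives $\langle\nu_x,\operatorname{id}\rangle=\nabla u(x)$ a.e., and localizing Lemma \ref{lower-semicontinuous-for-quasiconvex}(i) over balls $B_r(x_0)\subseteq D$, divided by $|B_r(x_0)|$ as $r\to0$ at common Lebesgue points of $\nabla u$ and $x\mapsto\langle\nu_x,\phi\rangle$, yields $\phi(\nabla u(x_0))\le\langle\nu_{x_0},\phi\rangle$ for quasiconvex $\phi\in\mathcal{E}^p(\mathbb{R}^d)$ bounded below.

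The main obstacle, and essentially the only step demanding care, is exactly this passage from $C_0$ to $p$-growth test functions: Lemma \ref{Fundamental-Theorem-on-YoungMeasure} supplies weak$^\ast$ convergence of $\psi(\nabla u^k)$ in $L^\infty(D)$ only for $\psi\in C_0(\mathbb{R}^d)$ and, a priori, with a concentration defect, whereas Definition \ref{W1p-gradient-YoungMeasure} demands $\int_E\psi(\nabla u^k)\,dx\to\int_E\langle\nu_x,\psi\rangle\,dx$ for every measurable $E$ and every $\psi$ of $p$-growth. Closing this gap is precisely where the coercivity of $g$ and the critical equality hypothesis $\lim_k\int_D g(\nabla u^k)\,dx=\int_D g(\nabla u)\,dx$ are genuinely consumed — through the equi-integrability of $\{|\nabla u^k|^p\}$ and the resulting absence of concentration in $\nu$; once that is in place, the remaining manipulations are routine.
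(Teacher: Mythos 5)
The paper gives no proof of this lemma: it is imported verbatim from Kinderlehrer--Pedregal \cite{kinderlehrer1992}, so there is no internal argument to compare yours against. Your reconstruction is essentially the standard proof from that reference, and its architecture is sound: the coercivity $(c|\xi|^p-1)^+\le g(\xi)$ together with the weak $L^1$ precompactness of $\{g(\nabla u^k)\}$ supplied by Lemma \ref{lower-semicontinuous-for-quasiconvex}(ii) (via Dunford--Pettis) yields equi-integrability of $\{|\nabla u^k|^p\}$, and that is exactly what upgrades the $C_0(\mathbb{R}^d)$ convergence of Lemma \ref{Fundamental-Theorem-on-YoungMeasure}(ii) to the $p$-growth representation formula required by Definition \ref{W1p-gradient-YoungMeasure}. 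You correctly identify this as the step where the hypothesis $\lim_k\int_D g(\nabla u^k)\,dx=\int_D g(\nabla u)\,dx$ is consumed.

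One detail in your truncation step needs repair. For $\psi\in\mathcal{E}_0^p(\mathbb{R}^d)$ the ratio $\psi(\xi)/(1+|\xi|^p)$ is only required to have a limit as $|\xi|\to\infty$, not to vanish, so you cannot in general choose $R$ with $|\psi(\xi)|\le\varepsilon(1+|\xi|^p)$ on $\{|\xi|\ge R\}$. Either split $\psi=\ell\,(1+|\xi|^p)+\tilde\psi$ with $\ell$ the limit and $\tilde\psi$ decaying relative to $1+|\xi|^p$ (handling $|\xi|^p$ separately), or, more directly, bound the tail by
$A\sup_k\int_{E\cap\{|\nabla u^k|\ge R\}}\bigl(1+|\nabla u^k|^p\bigr)\,dx$,
which tends to $0$ as $R\to\infty$ uniformly in $k$ precisely because $\{|\nabla u^k|^p\}$ is equi-integrable and $\operatorname{meas}\{|\nabla u^k|\ge R\}\to0$ uniformly. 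This is a local fix using machinery you have already set up, not a gap in the idea; with it the argument is complete.
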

\begin{lem}\!\!\!\emph{\cite{yin2003}}\label{WangChunpeng-YoungMeasure}
	Let $1<p\leq 2$. Suppose that $\{\nu^\alpha=(\nu_x^\alpha)_{x\in D}\}_{\alpha>0}$ is a family of $W^{1, p}(D)$-gradient Young measures, and each is generated by $\{\nabla u^{\alpha, m}\}_{m=1}^{\infty}$, where $u^{\alpha, m}$ is in $W^{1, p}(D)$ uniformly bounded in $\alpha$ and $m$. Then there exists a subsequence of $\{\nu^\alpha\}_{\alpha>0}$, denoted by $\{\nu^{\alpha_i}\}_{i=1}^{\infty}$, and a $W^{1, p}(D)$-gradient Young measure $\nu=(\nu_x)_{x\in D}$ such that
        \begin{itemize}
            \item [(i)] $\{\nu^{\alpha_i}\}_{i=1}^{\infty}$ converges weakly$^\ast$ to $\nu$ in $L^{\infty}(D;\mathcal{M}(\mathbb{R}^d))$, namely, $\{\left\langle\nu^{\alpha_i}, \psi\right\rangle\}_{i=1}^{\infty}$ converges weakly$^\ast$ to $\langle\nu,\psi\rangle$ in $L^{\infty}(D)$ for all $\psi\in C_0(\mathbb{R}^d)$\emph{;}

            \item [(ii)] $\{\nu^{\alpha_i}\}_{i=1}^{\infty}$ converges weakly to $\nu$ in $L^1(D ;(\mathcal{E}_0^q(\mathbb{R}^d))^{\prime})$ for $1\leq q<p$, namely, $\{\left\langle v^{\alpha_i}, \psi\right\rangle\}_{i=1}^{\infty}$ converges weakly to $\langle\nu,\psi\rangle$ in $L^1(D)$ for all $\psi\in \mathcal{E}_0^q(\mathbb{R}^d)$\emph{;}

            \item [(iii)] $\{\nu^{\alpha_i}\}_{i=1}^{\infty}$ converges to $\nu$ in $L^1(D ;(\mathcal{E}_0^p(\mathbb{R}^d))^{\prime})$ in the biting sense, namely, $\{\left\langle\nu^{\alpha_i}, \psi\right\rangle\}_{i=1}^{\infty}$ converges to $\langle \nu, \psi\rangle$ in the biting sense for all $\psi\in\mathcal{E}_0^p(\mathbb{R}^d)$.
        \end{itemize}
\end{lem}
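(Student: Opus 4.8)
The plan is to combine the three devices advertised in the introduction: freeze the reaction term, solve the resulting forward--backward equation by Rothe's method together with the relaxation theorem, and close the loop with Schauder's fixed-point theorem; then use the independence property of the solution to get uniqueness and continuous dependence at once. First I would introduce the auxiliary problem: for $w$ in a closed convex set $K\subset L^2(Q_T)$ consisting of functions trapped between $\inf f$ and an explicit upper barrier (so that $w^{-2}$ is bounded and $\frac{w-f}{w^2}$ is a well-behaved source), consider
\[
\frac{\partial u}{\partial t}=\operatorname{div}\!\left(\alpha(x)q(\nabla u)\right)-\lambda\frac{w-f}{w^2}\ \text{ in }Q_T,\qquad \langle\nabla u,\vec n\rangle=0\ \text{ on }S_T,\qquad u(\cdot,0)=f,
\]
where $q(\xi)=\frac{\xi}{1+|\xi|^2}+\delta|\xi|^{p-2}\xi=\nabla\varphi(\xi)$ and $\varphi$ obeys the structure conditions (\ref{structure-condition}). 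This decouples the degenerate forward--backward diffusion from the genuinely nonlinear dependence of the source on $u$.

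Next I would solve the auxiliary problem by time discretization. On a partition of $(0,T)$ into steps of size $\tau$, the $k$-th step minimizes $u\mapsto\int_\Omega\big(\alpha(x)\varphi(\nabla u)+\tfrac{1}{2\tau}|u-u^{k-1}|^2+\lambda\tfrac{w-f}{w^2}u\big)\,dx$ over $W^{1,p}(\Omega)$. Since $\varphi$ is nonconvex this functional is not weakly lower semicontinuous, so I would pass to its quasiconvexification, equivalently work with $W^{1,p}$-gradient Young measures in the sense of Definition \ref{W1p-gradient-YoungMeasure}, and apply the relaxation theorem to obtain at each step a Young measure $\nu^k$ with barycenter $\nabla u^k$ satisfying the discrete relation $\frac{u^k-u^{k-1}}{\tau}=\operatorname{div}\!\int_{\mathbb R^n}\alpha(x)q(\xi)\,d\nu^k_x(\xi)-\lambda\frac{w-f}{w^2}$ weakly. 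Testing with $u^k$ and with $u^k-u^{k-1}$, summing over $k$, and using the $p$-growth of $\varphi$ and a discrete Gronwall inequality gives bounds on the Rothe interpolants uniform in $\tau$: $u^\tau$ bounded in $L^\infty(0,T;W^{1,p}(\Omega))$, $\partial_t u^\tau$ bounded in $L^2(Q_T)$, and the time-slice measures uniformly of $p$-th order. A Stampacchia truncation argument, using $f\in L^\infty(\Omega)$ and $\inf f>0$, yields an $L^\infty$ bound and a strictly positive lower bound for $u^\tau$. Letting $\tau\to0$ and invoking Lemma \ref{WangChunpeng-YoungMeasure} together with the Aubin--Lions lemma produces a Young measure solution $u=:S(w)$ of the auxiliary problem.

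Then I would verify that $S:K\to K$ is well-defined, continuous and compact: compactness from the uniform $W^{1,p}$ and $\partial_t$ bounds via Aubin--Lions; $K$-invariance from the $L^\infty$/positivity estimates, which pin $S(w)$ between $\inf f$ and the barrier; continuity because $w\mapsto\frac{w-f}{w^2}$ is Lipschitz on $K$ and the a priori bounds depend continuously on the data. Schauder's fixed-point theorem then furnishes $u\in K$ with $S(u)=u$, which is exactly a Young measure solution of (\ref{P1})--(\ref{P3}) with the required positivity. For uniqueness and $L^2$ continuous dependence I would take two such solutions $u_1,u_2$ with initial data $f_1,f_2$, subtract the equations, and test with $u_1-u_2$. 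The diffusion contribution is controlled from below through the independence property
\[
\int_{\mathbb R^n}q(\xi)\cdot\xi\,d\nu_{x,t}(\xi)=\int_{\mathbb R^n}q(\xi)\,d\nu_{x,t}(\xi)\cdot\int_{\mathbb R^n}\xi\,d\nu_{x,t}(\xi),
\]
valid (the scalar $\alpha(x)$ factoring out) for the solutions constructed above as in \cite{demoulini1996,guidotti2012backward}, which lets one replace the Young-measure flux by $\alpha(x)$ times a function of $\nabla u_i$ and invoke monotonicity; the reaction term is estimated by its Lipschitz bound on $K$. This gives $\frac{d}{dt}\|u_1-u_2\|_{L^2(\Omega)}^2\le C\|u_1-u_2\|_{L^2(\Omega)}^2+C\|f_1-f_2\|_{L^2(\Omega)}^2$, and Gronwall's inequality yields $\|u_1-u_2\|_{L^2}\le C\|f_1-f_2\|_{L^2}$; taking $f_1=f_2$ gives uniqueness.

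I expect two main obstacles. The first is establishing the independence property for the Young measure solution with the source term and the $x$-dependent coefficient $\alpha$ present: following Demoulini, one identifies the limiting measure using the energy equality for the approximations and the lower semicontinuity of Lemma \ref{lower-semicontinuous-for-quasiconvex}, and one must check that the reaction term passes to the limit without disturbing the energy balance — this is precisely where the uniform $L^2$ bound on $\partial_t u^\tau$ and the $L^\infty$ control of the source are needed. The second is calibrating the convex set $K$ (the upper barrier, and possibly an exponential weight $e^{-\beta t}$ or a short time horizon followed by continuation) so that $S$ genuinely maps $K$ into itself while staying compact; since the $\frac{1}{1+|\nabla u|^2}$ part of the flux degenerates as $|\nabla u|\to\infty$, all coercivity must be extracted from the $\delta|\nabla u|^{p-2}\nabla u$ term, so the constants will depend on $\delta$.
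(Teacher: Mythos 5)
Your proposal does not prove the statement it was asked to prove. The statement is Lemma \ref{WangChunpeng-YoungMeasure}, a compactness result for families of $W^{1,p}(D)$-gradient Young measures quoted from \cite{yin2003}: given measures $\nu^\alpha$ generated by gradients of functions uniformly bounded in $W^{1,p}(D)$, one must extract a subsequence converging to a limiting $W^{1,p}(D)$-gradient Young measure $\nu$ weakly$^\ast$ in $L^{\infty}(D;\mathcal{M}(\mathbb{R}^d))$, weakly in $L^1(D;(\mathcal{E}_0^q)')$ for every $q<p$, and in the biting sense for $q=p$. What you have written instead is an outline of the proof of the paper's main result, Theorem \ref{thirdtheorem} (existence, uniqueness and continuous dependence of Young measure solutions to (\ref{P1})--(\ref{P3}) via an auxiliary frozen-coefficient problem, Rothe's method, relaxation, and Schauder's fixed point theorem). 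Indeed your argument \emph{invokes} Lemma \ref{WangChunpeng-YoungMeasure} as a black box in the passage $\tau\to 0$, so it cannot serve as a proof of that lemma without circularity.

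A proof of the actual statement would run along entirely different lines: use the uniform $W^{1,p}$ bound on the generating functions $u^{\alpha,m}$ to get a uniform bound on the $p$-th moments $\int_D\langle\nu^\alpha_x,|\cdot|^p\rangle\,dx$; apply the fundamental theorem on Young measures (Lemma \ref{Fundamental-Theorem-on-YoungMeasure}) together with a diagonal extraction over a countable dense subset of $C_0(\mathbb{R}^d)$ to obtain the weak$^\ast$ limit $\nu$ and verify $\|\nu_x\|_{\mathcal{M}}=1$ from the tightness supplied by the moment bound; deduce (ii) from the de la Vall\'ee Poussin/Dunford--Pettis criterion, since for $\psi\in\mathcal{E}_0^q(\mathbb{R}^d)$ with $q<p$ the functions $\langle\nu^{\alpha},\psi\rangle$ are uniformly integrable; obtain (iii) from Chacon's biting lemma (Definition \ref{convergence-in-the-sense-of-biting}, \cite{ballmurat1989}) applied to the merely $L^1$-bounded family $\langle\nu^{\alpha},\psi\rangle$ when $\psi$ has critical growth $p$; and finally confirm that $\nu$ is itself a $W^{1,p}(D)$-gradient Young measure via a diagonal subsequence of the $u^{\alpha,m}$ and the Kinderlehrer--Pedregal characterization. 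None of these steps appears in your proposal, so as an answer to the posed question it is a complete miss, independent of whether your sketch of Theorem \ref{thirdtheorem} is itself sound.
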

\begin{rem}\label{firstremark}
	Suppose $(\nu^{\alpha})_{\alpha>0}$, with $\nu^{\alpha}=(\nu^{\alpha}_{x})_{x\in D}$, is a sequence of Young measures bounded in $L^1(D;(\mathcal{E}_0^p(\mathbb{R}^d))^{\prime})$. For each $\alpha$, let $\{\nabla v^{\alpha,k}\}_{k=1}^{\infty}$ be the generating gradients, where $v^{\alpha,k}\in W^{1,p}(D)$. Then $(\nu^{\alpha})_{\alpha>0}$ is bounded in $L^{\infty}(D;\mathcal{M}(\mathbb{R}^d))$ and a subsequence of the $\{v^{\alpha,k}\}_{\alpha,k}$ is bounded in $W^{1,p}(D)$ uniformly in $\alpha$ and $k$ (and can be taken to be the new generating sequence). Thus, Lemma \ref{WangChunpeng-YoungMeasure} applies.
\end{rem}

We now define the Young measure solution to the problem (\ref{P1})--(\ref{P3}). Without loss of generality, we assume $\lambda=1$ in (\ref{P1}). Denote $\varphi^{**}$ the convexification of $\varphi$, namely,
$$
\varphi^{**}(\xi)=\sup\left\{g(\xi):g\leq\varphi,g\text{ convex}\right\}.
$$
Since $\varphi\in C^1(\mathbb{R}^n)$, it follows that $\varphi^{**}\in C^1(\mathbb{R}^n)$ is convex. Set 
$$
q^{**}=\nabla \varphi^{**}.
$$ 
Observe that
$q=q^{**}$ on the set $\left\{\xi\in\mathbb{R}^n:\varphi(\xi)=\varphi^{**}(\xi)\right\}$. Furthermore, $\varphi^{**}$ and $q^{**}$ satisfy the same structure conditions (\ref{structure-condition}) as $\varphi$ and $q$, respectively, i.e.
\begin{equation}
	\gamma_1\left|\xi\right|^p\leq \varphi^{**}(\xi)\leq \gamma_2\left|\xi\right|^p+1,\quad \left|q^{**}(\xi)\right|\leq \gamma_2\left|\xi\right|^{p-1}+1,\quad\xi\in \mathbb{R}^n.\label{growth-condition-for-varphi*-and-p}
\end{equation}
Given fixed but arbitrary constants $a<b$, we denote $W^{1,p}_{a,b}(\Omega)$ as the subspace of $W^{1,p}(\Omega)$ functions that satisfy $a\leq u(x)\leq b$ almost everywhere in $\Omega$, namely,
$$
W^{1,p}_{a,b}(\Omega)=\left\{u\in W^{1,p}(\Omega): a\leq u(x)\leq b,~\mathrm{a.e.}~x\in\Omega\right\}.
$$
\begin{defn}
	A function $u\in L^{\infty}(0,T;W^{1,p}(\Omega))\cap L^{\infty}(Q_T)$ with $\frac{\partial u}{\partial t}\in L^2(Q_T)$ is said to be a
	Young measure solution to the problem (\ref{P1})--(\ref{P3}), if there exists a $W^{1,p}(Q_T)$-gradient Young measure $\nu=(\nu_{x,t})_{(x,t)\in Q_T}$ on $\mathbb{R}^n$ such that
	\begin{equation}
		\iint_{Q_T}\left(\alpha(x)\langle\nu, q\rangle\cdot\nabla\eta+\frac{\partial u}{\partial t}\eta+\frac{u-f}{u^2}\eta\right) dxdt=0,\label{equilibrium-equation-for-u}
	\end{equation}
	for any $\eta\in C^{\infty}(\overline{Q_T})$, and
	\begin{equation}
		\nabla u(x,t)=\langle\nu_{x,t},\mathrm{id}\rangle,\quad\text{a.e.}~(x,t)\in Q_T,\label{gradient-of-u}
	\end{equation}
	\begin{equation}
		\langle\nu_{x,t},q\cdot \mathrm{id}\rangle=\langle\nu_{x,t}, q\rangle\cdot\langle\nu_{x,t}, \mathrm{id}\rangle,\quad\text{a.e.}~(x,t)\in Q_T,\label{independence-for-u}
	\end{equation}
	\begin{equation}
		\operatorname{supp}\nu_{x,t}\subseteq\left\{\xi\in \mathbb{R}^n: \varphi(\xi)=\varphi^{**}(\xi)\right\},\quad\text{a.e.}~(x,t)\in Q_T,\label{support-of-u}
	\end{equation}
	and (\ref{P3}) holds in the sense of trace.
\end{defn}

Our purpose is to construct a Young measure solution to the problem (\ref{P1})--(\ref{P3}) by utilizing Schauder’s fixed-point theorem \cite{evans2022partial}.
Considering this, we first introduce the solution space $W(0,T)$ for the problem (\ref{P1})--(\ref{P3}), defined as follows:
$$
W(0,T)=\left\lbrace w\in L^{\infty}\left(0,T;W^{1,p}(\Omega)\right), \frac{\partial w}{\partial t}\in L^2(Q_T)\right\rbrace,
$$
which is a Banach space equipped with the graph norm \cite{brezis2010}. In the following, we assume that $\alpha(x)\geq\alpha_1>0$ for $x\in\Omega$, and the initial value $f\in W^{1,p}(\Omega)$ satisfies
\begin{equation}
    0<l:=\inf_{x\in\Omega}f(x)~\text{and}~d:=\sup_{x\in\Omega}f(x)<\infty.\label{assumption-for-u0}
\end{equation}

Take $w\in W(0,T)\cap L^{\infty}(Q_T)$ such that
\begin{equation}
	l\leq w(x,t)\leq d,\quad\text{a.e.}~(x,t)\in Q_T.\label{l<w<d}
\end{equation}
Let us denote the following auxiliary problem by ($\mathcal{P}_w$)
\begin{numcases}{}
	\frac{\partial u}{\partial t}=\operatorname{div} \left(\alpha(x)q(\nabla u) \right)+\frac{f-u}{w^2}, & $(x,t)\in Q_T$,\notag\\
	\langle \nabla u,\vec{n} \rangle=0, & $(x,t)\in S_T$,\notag \\
	u(x,0)=f(x) & $x\in\Omega$.\notag
\end{numcases}


Next, we define the Young measure solution to problem ($\mathcal{P}_w$).
\begin{defn}
	We say $u_w\in W(0,T)\cap L^{\infty}(Q_T)$ is the Young measure solution of problem ($\mathcal{P}_w$), if there exists a $W^{1,p}(Q_T)$-gradient Young measure $\nu^w=(\nu^w_{x,t})_{(x,t)\in Q_T}$ on $\mathbb{R}^n$ such that
	\begin{equation}
		\iint_{Q_T}\left(\alpha(x)\langle\nu^w, q\rangle\cdot\nabla\eta+\frac{\partial u_w}{\partial t}\eta+\frac{u_w-f}{w^2}\eta\right) dxdt=0,\label{equilibrium-equation-for-uw}
	\end{equation}
	for any $\eta\in C^{\infty}(\overline{Q_T})$, and
	\begin{equation}
		\nabla u_w(x,t)=\langle\nu^w_{x,t},\mathrm{id}\rangle,\quad\text{a.e.}~(x,t)\in Q_T,\label{gradient-of-uw}
	\end{equation}
	\begin{equation}
		\langle\nu^w_{x,t},q\cdot \mathrm{id}\rangle=\langle\nu^w_{x,t}, q\rangle\cdot\langle\nu^w_{x,t}, \mathrm{id}\rangle,\quad\text{a.e.}~(x,t)\in Q_T,\label{independence-for-uw}
	\end{equation}
	\begin{equation}
		\operatorname{supp}\nu^w_{x,t}\subseteq\left\{\xi\in \mathbb{R}^n: \varphi(\xi)=\varphi^{**}(\xi)\right\},\quad\text{a.e.}~(x,t)\in Q_T,\label{support-of-uw}
	\end{equation}
	and
	\begin{equation}
		u_w(x,0)=u_0(x),\quad x\in\Omega,\label{trace-of-uw}
	\end{equation}
	in the sense of trace.
\end{defn}

\section{Existence of Young measure solutions to $(\mathcal{P}_w)$}\label{proof-of-P_w}
In this section, we will study the existence of Young measure solutions to problem $(\mathcal{P}_w)$, inspired by the methods of Demoulini \cite{demoulini1996}, Yin and Wang \cite{yin2003}, and Guidotti \cite{guidotti2012backward}.

Let $m$ be a positive integer. For $v\in W^{1,p}(\Omega)$, we define
$$
E_m(v;u^{j-1}_m)=\int_{\Omega} \left(\alpha(x)\varphi(\nabla v)+\frac{m}{2T}(v-u_m^{j-1})^2+\frac{(v-f)^2}{2(w_m^j)^2}\right)dx,
$$
where $u_m^0=f$, $u_m^{j-1}\in W^{1,p}_{l,d}(\Omega)$, $w_m^j(x)=w(x,jT/m),~\text{a.e.}~x\in\Omega$, $1\leq j\leq m$. Since $\varphi$ is not necessarily convex, and following relaxation theory, $\varphi$ is replaced by its convexification $\varphi^{**}$. The energy functional $E_m$ has the same infimum as
$$
E_m^{**}(v;u^{j-1}_m)=\int_{\Omega} \left(\alpha(x)\varphi^{**}(\nabla v)+\frac{m}{2T}(v-u_m^{j-1})^2+\frac{(v-f)^2}{2(w_m^j)^2}\right)dx.
$$
The two functionals are both lower bounded owing to (\ref{structure-condition}) and (\ref{growth-condition-for-varphi*-and-p}). It is readily seen that
$$
E_m^{**}(T_l^d(v);f)\leq E_m^{**}(v;f),\quad v\in W^{1,p}(\Omega),
$$
and that
$$
E_m(T_l^d(v);f)\leq E_m(v;f),\quad v\in W^{1,p}(\Omega),
$$
where the truncation operation $T_l^d:\mathbb{R}\rightarrow\mathbb{R}^{+}$ is defined by
$$
T_l^d(z)=
\left\{
\begin{aligned}
	&d, \quad &z>d,\\
	&z, \quad &l\leq z\leq d,\\
	&l, \quad &z<l.\\
\end{aligned}
\right.
$$
It follows that minimizers and minimizing sequences can all be assumed to lie in $W^{1,p}_{l,d}(\Omega)$ without loss of generality. 
First, we prove the following lemma.
\begin{lem}\label{firstlemma}
	There exists $u_m^j\in W_{l,d}^{1,p}(\Omega)$ such that $u_m^j$ is a minimum of $E_m^{**}(v;u^{j-1}_m)$ and
	\begin{equation}
		\left\|u_m^j\right\|_{W^{1,p}(\Omega)}\leq M_1,\label{boundness-of-u_m^j-for-W1pnorm-and-L^inftynorm}
	\end{equation}
	moreover,
	\begin{equation}
		\frac{m}{T}\sum_{j=1}^{m}\left\|u_m^{j}-u_m^{j-1}\right\|_{L^2(\Omega)}^2\leq M_2,\label{boundness-for-m/4T-sum-(u_m^j-u_m^j-1)^2}
	\end{equation}
	where $M_i~(i=1,2)$ are positive constants independent of $m$ and $j$.
\end{lem}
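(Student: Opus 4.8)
The plan is to perform a single step of Rothe's scheme by the direct method of the calculus of variations and then extract the two bounds from the associated discrete energy inequality.

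\emph{Step 1: existence of the minimizer.} First I would work throughout with the relaxed \emph{convex} functional $E_m^{**}(\cdot\,;u_m^{j-1})$, which by relaxation (as noted before the statement) has the same infimum as $E_m(\cdot\,;u_m^{j-1})$. Its integrand is nonnegative and convex in the gradient variable; by $\varphi^{**}(\xi)\ge\gamma_1|\xi|^p$ from (\ref{growth-condition-for-varphi*-and-p}) and $\alpha\ge\alpha_1>0$ one has $E_m^{**}(v;u_m^{j-1})\ge\alpha_1\gamma_1\|\nabla v\|_{L^p(\Omega)}^p+\frac{m}{2T}\|v-u_m^{j-1}\|_{L^2(\Omega)}^2$, which is coercive on $W^{1,p}(\Omega)$ (triangle inequality, plus $\|v\|_{L^p(\Omega)}\le C\|v\|_{L^2(\Omega)}$ since $p\le 2$ and $|\Omega|<\infty$), while $v\mapsto\int_\Omega\alpha(x)\varphi^{**}(\nabla v)\,dx$ is weakly lower semicontinuous on $W^{1,p}(\Omega)$ because $\varphi^{**}$ is convex and continuous. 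The truncation estimate recorded before the lemma applies verbatim with the inductive hypothesis $u_m^{j-1}\in W^{1,p}_{l,d}(\Omega)$ (and $u_m^0=f$) in place of $f$, since projecting onto $[l,d]$ contracts toward any point of $[l,d]$ and does not enlarge the gradient; hence the minimization may be restricted to the convex, weakly closed set $W^{1,p}_{l,d}(\Omega)$, on which the bound $\|v\|_{L^\infty(\Omega)}\le d$ turns weak $W^{1,p}$-convergence into strong $L^2$-convergence, so that the lower-order terms pass to the limit. The direct method then produces a minimizer $u_m^j\in W^{1,p}_{l,d}(\Omega)$; in particular $\|u_m^j\|_{L^\infty(\Omega)}\le d$.

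\emph{Step 2: the discrete energy inequality.} Next I would compare the minimum with the competitor $v=u_m^{j-1}$, for which the quadratic penalty vanishes, to get
\[
\int_\Omega\alpha\varphi^{**}(\nabla u_m^j)\,dx+\frac{m}{2T}\|u_m^j-u_m^{j-1}\|_{L^2(\Omega)}^2+\frac12\int_\Omega\frac{(u_m^j-f)^2}{(w_m^j)^2}\,dx\le\int_\Omega\alpha\varphi^{**}(\nabla u_m^{j-1})\,dx+\frac12\int_\Omega\frac{(u_m^{j-1}-f)^2}{(w_m^j)^2}\,dx.
\]
Summing over $j=1,\dots,k$ with $k\le m$, the $\varphi^{**}$-terms telescope to $\int_\Omega\alpha\varphi^{**}(\nabla u_m^k)\,dx-\int_\Omega\alpha\varphi^{**}(\nabla f)\,dx$, and an Abel summation of the fidelity terms (using $u_m^0=f$) leaves the nonnegative piece $\frac12\int_\Omega(w_m^k)^{-2}(u_m^k-f)^2\,dx$ together with the mismatch $R_k:=\frac12\sum_{j=1}^{k-1}\int_\Omega(u_m^j-f)^2\big((w_m^j)^{-2}-(w_m^{j+1})^{-2}\big)\,dx$. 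The only term that does not close up is $R_k$, precisely because the data-fidelity weight $w_m^j=w(\cdot,jT/m)$ varies with $j$; this is the heart of the estimate. I would bound it using $l\le u_m^j,f,w_m^j\le d$, which give $(u_m^j-f)^2\le(d-l)^2$ and $|(w_m^j)^{-2}-(w_m^{j+1})^{-2}|\le 2dl^{-4}|w_m^{j+1}-w_m^j|$, together with the fact that $w\in L^\infty(Q_T)$ and $\partial_t w\in L^2(Q_T)$, hence $w\in C([0,T];L^2(\Omega))$, so that the fundamental theorem of calculus in time and Cauchy--Schwarz give $\sum_j\|w_m^{j+1}-w_m^j\|_{L^2(\Omega)}\le\int_0^T\|\partial_t w(\cdot,s)\|_{L^2(\Omega)}\,ds\le\sqrt T\,\|\partial_t w\|_{L^2(Q_T)}$, a bound independent of $m$. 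Thus $|R_k|\le C$ with $C$ depending only on $l,d,|\Omega|,T$ and $\|\partial_t w\|_{L^2(Q_T)}$.

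\emph{Step 3: conclusion, and the main obstacle.} Feeding $|R_k|\le C$ back into the summed inequality, each of the nonnegative terms on its left is bounded by $\int_\Omega\alpha\varphi^{**}(\nabla f)\,dx+C$. In particular $\alpha_1\gamma_1\|\nabla u_m^k\|_{L^p(\Omega)}^p\le\int_\Omega\alpha\varphi^{**}(\nabla u_m^k)\,dx\le\int_\Omega\alpha\varphi^{**}(\nabla f)\,dx+C$ uniformly in $k\le m$ and in $m$; combined with $\|u_m^k\|_{L^\infty(\Omega)}\le d$ and the upper bound in (\ref{growth-condition-for-varphi*-and-p}) this yields (\ref{boundness-of-u_m^j-for-W1pnorm-and-L^inftynorm}). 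Taking $k=m$ and dropping the (nonnegative) $\varphi^{**}$- and fidelity-terms from the left gives $\frac{m}{2T}\sum_{j=1}^m\|u_m^j-u_m^{j-1}\|_{L^2(\Omega)}^2\le\int_\Omega\alpha\varphi^{**}(\nabla f)\,dx+C$, which is (\ref{boundness-for-m/4T-sum-(u_m^j-u_m^j-1)^2}). I expect the delicate point to be the mismatch $R_k$: one must convert the $W(0,T)$-regularity of the frozen coefficient $w$ into a control of $\sum_j\|w_m^{j+1}-w_m^j\|_{L^2(\Omega)}$ that is uniform in the number of time steps, without which the telescoping would only give bounds growing with $m$; existence of the minimizer and the telescoping itself are routine once $\varphi$ has been relaxed to the convex $\varphi^{**}$.
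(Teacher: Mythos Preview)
Your proof is correct for the lemma as stated, but it follows a different path from the paper's, and the difference is worth noting because it affects what the argument yields for the rest of Section~\ref{proof-of-P_w}.

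For existence, the paper does \emph{not} apply the direct method to $E_m^{**}$. It instead picks a minimizing sequence $\{u_m^{j,k}\}_k\subset W^{1,p}_{l,d}(\Omega)$ of the \emph{nonconvex} $E_m(\cdot;u_m^{j-1})$, obtains uniform $W^{1,p}$-bounds for it, and passes to a weak limit $u_m^j$ which, by lower semicontinuity and the relaxation identity, minimizes $E_m^{**}$. This is not gratuitous: the sequences $\{u_m^{j,k}\}_k$ are precisely what generate the Young measures $\nu^{m,j}$ used immediately after the lemma, so the paper's detour buys the construction of the approximate Young measures for free. Your cleaner direct-method argument proves the lemma but would leave those generating sequences to be produced separately.

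For the a~priori bounds, the two arguments diverge more substantially. You compare with the competitor $u_m^{j-1}$, telescope, and then control the fidelity mismatch $R_k$ by Abel summation plus the time regularity $\partial_t w\in L^2(Q_T)$; this gives $M_1,M_2$ depending on $\|\partial_t w\|_{L^2(Q_T)}$. The paper instead writes $\frac{(u_m^{j-1}-f)^2-(u_m^{j,k}-f)^2}{2(w_m^j)^2}$ as a product and applies Young's inequality to absorb it into $\frac{m}{4T}\|u_m^{j,k}-u_m^{j-1}\|_{L^2}^2$ plus an error of size $C/m$ (using only $l\le w_m^j\le d$), arriving at the recursion
\[
\int_\Omega\alpha\varphi^{**}(\nabla u_m^{j,k})\,dx+\frac{m}{4T}\|u_m^{j,k}-u_m^{j-1}\|_{L^2}^2\le\Bigl(1+\tfrac{C_0}{m}\Bigr)\int_\Omega\alpha\varphi^{**}(\nabla u_m^{j-1})\,dx+\tfrac{C_0}{m},
\]
and then a discrete Gronwall argument $(1+C_0/m)^m\le e^{C_0}$. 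The payoff is that the paper's $M_1,M_2$ depend only on $\Omega,T,\alpha_1,\gamma_1,p,l,d$ and \emph{not} on $w$; this uniformity is what makes the constant $M$ in Lemma~\ref{secondlemma} and the set $W_0$ in Section~\ref{proof-of-main-thm} independent of the frozen coefficient, so the Schauder map lands in a fixed ball. Your constants are $w$-dependent, which is harmless for the present lemma (only independence of $m,j$ is claimed) but would introduce a circularity later that the paper's route avoids.
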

\begin{proof}
	By relaxation theorem \cite{dacorogna2007}, we get that
	\begin{equation}
		\inf\left\{E_m(v;u_m^{j-1}):v\in W^{1,p}(\Omega)\right\}=\inf\left\{E_m^{**}(v;u_m^{j-1}):v\in W^{1,p}(\Omega)\right\}.\label{relaxation-infimum}
	\end{equation}
	Let $\{u_m^{j,k}\}_{k=1}^{\infty}\subset W^{1,p}_{l,d}(\Omega)$ be a minimizing sequence of $E_m(\cdot;u_m^{j-1})$ satisfying
	\begin{equation}
		E_m(u_m^{j,k};u_m^{j-1})<\inf\left\{E_m(v;u_m^{j-1}):v\in W^{1,p}(\Omega)\right\}+\frac{1}{mk},~k\geq 1.\label{relaxation-inequality-for-Em}
	\end{equation}
	Since $\varphi^{**}\leq\varphi$, (\ref{relaxation-infimum}) and (\ref{relaxation-inequality-for-Em}) yield
	\begin{equation}
		E_m^{**}(u_m^{j,k};u_m^{j-1})<\inf\left\{E_m^{**}(v;u_m^{j-1}):v\in W^{1,p}(\Omega)\right\}+\frac{1}{mk},~k\geq 1,\label{relaxation-inequality-for-Em*}
	\end{equation}
	which, together with the Young's inequality, implies that
	$$
	\begin{aligned}
		&\int_{\Omega} \left(\alpha(x)\varphi^{**}(\nabla u_m^{j,k})+\frac{m}{2T}(u_m^{j,k}-u_m^{j-1})^2\right)dx-\frac{1}{mk}\\
		\leq &\int_{\Omega} \left(\alpha(x)\varphi^{**}(\nabla u_m^{j-1})+\frac{(u_m^{j-1}-f)^2}{2(w_m^j)^2}-\frac{(u_m^{j,k}-f)^2}{2(w_m^j)^2}\right)dx\\
		= & \int_{\Omega} \left(\alpha(x)\varphi^{**}(\nabla u_m^{j-1})+\frac{u_m^{j-1}+u_m^{j,k}-2f}{2(w_m^j)^2}(u_m^{j-1}-u_m^{j,k})\right)dx\\
		\leq & \int_{\Omega} \alpha(x)\varphi^{**}(\nabla u_m^{j-1})dx+\frac{T}{m}\int_{\Omega}\frac{(u_m^{j-1}+u_m^{j,k}-2f)^2}{4(w_m^j)^{4}}dx\\
		&+\frac{m}{4T} \int_{\Omega}(u_m^{j,k}-u_m^{j-1})^2dx,\quad k\geq 1.
	\end{aligned}
	$$
	Using this formula, (\ref{growth-condition-for-varphi*-and-p}), (\ref{l<w<d}) and the Poincar\'{e} inquality, one gets that
	\begin{align}
		&\int_{\Omega} \left(\alpha(x)\varphi^{**}(\nabla u_m^{j,k})+\frac{m}{4T}(u_m^{j,k}-u_m^{j-1})^2\right)dx\notag\\
		\leq & \int_{\Omega} \alpha(x)\varphi^{**}(\nabla u_m^{j-1})dx+\frac{T}{m}\int_{\Omega}\frac{(u_m^{j-1}+u_m^{j,k}-2f)^2}{4(w_m^j)^{4}}dx+\frac{1}{mk}\notag\\
		\leq & \int_{\Omega} \alpha(x)\varphi^{**}(\nabla u_m^{j-1})dx+\frac{T}{m}\int_{\Omega} \frac{(u_m^{j-1})^2+(u_m^{j,k})^2+2f^2}{2l^{4}}dx+\frac{1}{mk}\notag\\
		\leq & \int_{\Omega} \alpha(x)\varphi^{**}(\nabla u_m^{j-1})dx+\frac{C_1}{m}\int_{\Omega} (u_m^{j-1})^2dx+\frac{C_1}{m}+\frac{1}{mk}\notag\\
		\leq & \int_{\Omega} \alpha(x)\varphi^{**}(\nabla u_m^{j-1})dx+\frac{C_2}{m}\int_{\Omega}d^{2-p}\left|u_m^{j-1}\right|^p dx+\frac{C_2}{m}\notag\\
		\leq & \int_{\Omega} \alpha(x)\varphi^{**}(\nabla u_m^{j-1}) dx+\frac{C_3}{m}\int_{\Omega}\left|\nabla u_m^{j-1}\right|^pdx+\frac{C_3}{m}\notag\\
		\leq & \left(1+\frac{C_0}{m}\right) \int_{\Omega} \alpha(x)\varphi^{**}(\nabla u_m^{j-1})dx+\frac{C_0}{m},\quad 1\leq j\leq m,\ k\geq 1,\label{iteration-inequality}
	\end{align}
	where $C_1$, $C_2$, $C_3$ and $C_0$ depend only on $\Omega$, $T$, $\alpha_1$, $\gamma_1$, $p$, $l$ and $d$. 
	
	Take $u_m^0=f$. Then (\ref{iteration-inequality}) with $j=1$ shows that $\{u_m^{1,k}\}_{k=1}^{\infty}$ is uniformly bounded in $W^{1,p}(\Omega)\cap L^{\infty}(\Omega)$. By the growth condition (\ref{growth-condition-for-varphi*-and-p}) and the Rellich theorem \cite{evans2022partial}, there exists a function $u_m^1\in W^{1,p}_{l,d}(\Omega)\cap L^{\infty}(\Omega)$ and
	a convergent subsequence of $\{u_m^{1,k}\}_{k=1}^{\infty}$, denoted by itself, such that as $k\rightarrow\infty$
	$$
	u_m^{1,k}\rightarrow u_m^1~\text{strongly in}~L^p(\Omega),\quad\nabla u_m^{1,k}\rightharpoonup\nabla u_m^1~\text{weakly in}~L^p(\Omega).
	$$
	By virtue of Theorem 2.4 in \cite{acerbi1984}, we get that $E_m^{**}(\cdot;f)$ is sequentially weakly lower semi-continuous in $W^{1,p}(\Omega)$. Hence $u_m^1$ is a minimum of $E_m^{**}(\cdot;f)$ and
	\begin{align}
		\int_{\Omega}\alpha(x)\varphi^{**}(\nabla u_m^1)dx&=\lim_{k\rightarrow\infty}\int_{\Omega}\alpha(x)\varphi^{**}(\nabla u_m^{1,k})dx\notag\\
		&=\lim_{k\rightarrow\infty}\int_{\Omega}\alpha(x)\varphi(\nabla u_m^{1,k})dx,\notag
	\end{align}
	\begin{align}
		&\int_{\Omega}\left(\alpha(x)\varphi^{**}(\nabla u_m^1)+\frac{m}{4T}(u_m^1-f)^2\right)dx\notag\\
		\leq & \left(1+\frac{C_0}{m}\right) \int_{\Omega} \alpha(x)\varphi^{**}(\nabla f)dx+\frac{C_0}{m}.\notag
	\end{align}
	Repeating the above process in turn, we obtain $u_m^j\in W^{1,p}_{l,d}(\Omega)\cap L^{\infty}(\Omega)$ for $j=1,2,\cdots,m$, which satisfies
	\begin{equation}
		u_m^{j,k}\rightarrow u_m^j~\text{strongly in}~L^p(\Omega),\quad\nabla u_m^{j,k}\rightharpoonup\nabla u_m^j~\text{weakly in}~L^p(\Omega),\label{convergence-of-u_m^jk}
	\end{equation}
	as $k\rightarrow\infty$, and
	\begin{equation}
		E_m^{**}(u_m^j;u_m^{j-1})=\inf\left\{E_m^{**}(v;u_m^{j-1}):v\in W^{1,p}(\Omega)\right\}.\label{u_m^j-minimum-E_m^*}
	\end{equation}
	Similarly, we have
	\begin{align}
		\int_{\Omega}\alpha(x)\varphi^{**}(\nabla u_m^j)dx&=\lim_{k\rightarrow\infty}\int_{\Omega}\alpha(x)\varphi^{**}(\nabla u_m^{j,k})dx\notag\\
		&=\lim_{k\rightarrow\infty}\int_{\Omega}\alpha(x)\varphi(\nabla u_m^{j,k})dx,\label{convergence-for-the-first-term-of-E_m*-and-E_m}
	\end{align}
	\begin{align}
		&\int_{\Omega} \left(\alpha(x)\varphi^{**}(\nabla u_m^{j})+\frac{m}{4T}(u_m^{j}-u_m^{j-1})^2\right)dx\notag\\
		\leq& \left(1+\frac{C_0}{m}\right)\int_{\Omega} \alpha(x)\varphi^{**}(\nabla u_m^{j-1})dx+\frac{C_0}{m}.\label{iteration-inequlity-of-E_m*}
	\end{align}
	Direct calculation shows that
	$$
	\left(1+\frac{C_0}{m}\right)^m\leq e^{C_0},\quad \sum_{i=0}^{m-1} \left(1+\frac{C_0}{m}\right)^i\leq\frac{m}{C_0}e^{C_0}.
	$$
	Then, one gets from (\ref{iteration-inequality}) and (\ref{iteration-inequlity-of-E_m*}) that
	\begin{equation}
		\int_{\Omega} \alpha(x)\varphi^{**}(\nabla u_m^{j,k})dx\leq e^{C_0}\int_{\Omega}\alpha(x)\varphi^{**}(\nabla f)dx+e^{C_0},~k\geq 1,\label{iteration-inequlity-for-the-first-term-of-E_m*-with-u_m^jk}
	\end{equation}
	\begin{equation}
		\int_{\Omega} \alpha(x)\varphi^{**}(\nabla u_m^j)dx\leq e^{C_0}\int_{\Omega}\alpha(x)\varphi^{**}(\nabla f)dx+e^{C_0},~1\leq j\leq m.\label{iteration-inequlity-for-the-first-term-of-E_m*-with-u_m^j}
	\end{equation}
	Summing up (\ref{iteration-inequlity-of-E_m*}) from $1$ to $m$ leads to
	\begin{align}
		&\sum_{j=1}^{m}\int_{\Omega} \alpha(x)\varphi^{**}(\nabla u_m^{j})dx+\frac{m}{4T}\sum_{j=1}^{m}\int_{\Omega}(u_m^{j}-u_m^{j-1})^2dx\notag\\
		\leq&\left(1+\frac{C_0}{m}\right) \sum_{j=1}^{m}\int_{\Omega} \alpha(x)\varphi^{**}(\nabla u_m^{j-1})dx+C_0.\notag
	\end{align}
	Combining the above inequality with (\ref{iteration-inequlity-for-the-first-term-of-E_m*-with-u_m^j}), we get
	\begin{align}
		&\frac{m}{4T}\sum_{j=1}^{m}\int_{\Omega}(u_m^{j}-u_m^{j-1})^2dx\notag\\
		\leq& \frac{C_0}{m}\sum_{j=1}^{m}\int_{\Omega} \alpha(x)\varphi^{**}(\nabla u_m^{j-1})dx+\int_{\Omega} \alpha(x)\varphi^{**}(\nabla f)dx+C_0\notag\\
		\leq& (C_0e^{C_0}+1)\int_{\Omega}\alpha(x)\varphi^{**}(\nabla f)dx+C_0e^{C_0}+C_0.\label{boundness-for-sum-of-[u_m^j-u_m^j-1]}
	\end{align}
	This completes the proof.
\end{proof}
Recalling Lemma 1.2 in \cite{demoulini1996high} and (\ref{u_m^j-minimum-E_m^*}), together with the $W^{1,p}$-weak sequential lower semi-continuity of $E_m^{**}(\cdot;u_m^{j-1})$, it can be deduced that
$$
\varphi^{**}(\nabla u_m^{j,k})\rightharpoonup\varphi^{**}(\nabla u_m^j)~\text{weakly in}~L^1(\Omega),~\text{as}~k\rightarrow\infty.
$$
For $j=1,2,\cdots,m$, denote $\nu^{m,j}=(\nu^{m,j}_x)_{x\in\Omega}$ the Young measure generated by $\{\nabla u_m^{j,k}\}_{k=1}^{\infty}$. By Lemma \ref{criterion-of-W1p-gradient-YoungMeasure} with (\ref{growth-condition-for-varphi*-and-p}), $\nu^{m,j}$ is a $W^{1,p}(\Omega)$-gradient Young measure and the sequence $\{\varphi(\nabla u_m^{j,k})\}_{k=1}^{\infty}$ is also weakly convergent in $L^1(\Omega)$; the representation formula (\ref{representation-formula-YoungMeasure}) describes the weak $L^1$ limits and by (\ref{convergence-for-the-first-term-of-E_m*-and-E_m}) we obtain
$$
\int_{\Omega}\alpha(x)\varphi^{**}(\nabla u_m^j)dx=\int_{\Omega}\alpha(x)\langle\nu^{m,j}, \varphi^{**}\rangle dx=\int_{\Omega}\alpha(x)\langle\nu^{m,j}, \varphi\rangle dx,
$$
which together with $\varphi^{**}\leq\varphi$, implies
$$
\varphi^{**}(\nabla u_m^j(x))=\langle\nu_x^{m,j},\varphi^{**}\rangle=\langle\nu_x^{m,j},\varphi\rangle,\quad\text{a.e.}~x\in\Omega,
$$
and therefore
\begin{equation}
	\operatorname{supp}\nu_x^{m,j}\subseteq\left\{\xi\in\mathbb{R}^n:\varphi^{**}(\xi)=\varphi(\xi)\right\},\quad\text{a.e.}~x\in\Omega.\label{supportset-of-nu_m^j}
\end{equation}
Noticing that $\left\{\xi\in\mathbb{R}^n:\varphi(\xi)=\varphi^{**}(\xi)\right\}\subseteq\left\{\xi\in\mathbb{R}^n:q(\xi)=q^{**}(\xi)\right\}$, we see that
\begin{equation}
	\langle\nu_x^{m,j},q\rangle=\langle\nu_x^{m,j},q^{**}\rangle,\quad\text{a.e.}~x\in\Omega.\label{<nu_m^j,p>=<nu_m^j,q>}
\end{equation}
In addition, it follows from $\mathrm{id}\in\mathcal{E}_0^1(\mathbb{R}^n)$ and (\ref{convergence-of-u_m^jk}) that
\begin{equation}
	\nabla u_m^j(x)=\langle \nu_x^{m,j},\mathrm{id}\rangle,\quad\text{a.e.}~x\in\Omega.\label{gradient-of-u_m^j}
\end{equation}

Let $\chi_m^j$ the characteristic function of $[(j-1)T/m,jT/m)$ and denote
$$
\lambda_m^j(t)=\left(\frac{m}{T}t-(j-1)\right)\chi_m^j(t),\quad 0\leq t\leq T.
$$
Define
$$
\begin{aligned}
	&u_m(x,t)=\sum_{j=1}^{m}\chi_m^j(t)\left\{u_m^{j-1}(x)+\lambda_m^j(t)(u_m^j(x)-u_m^{j-1}(x))\right\}, & (x,t)\in Q_T,\\
	&v_m(x,t)=\sum_{j=1}^{m}\chi_m^j(t)u_m^j(x), & (x,t)\in Q_T,\\
	&w_m(x,t)=\sum_{j=1}^{m}\chi_m^j(t)w_m^j(x), & (x,t)\in Q_T,\\
	&v_m^k(x,t)=\sum_{j=1}^{m}\chi_m^j(t)u_m^{j,k}(x),~k=1,2,\cdots, &(x,t)\in Q_T,
\end{aligned}
$$
and
$$
\nu^m=(\nu_{x,t}^m)_{(x,t)\in Q_T},~\nu_{x,t}^m=\sum_{j=1}^{m}\chi_m^j(t)\nu_x^{m,j},\quad\quad\quad\quad\quad\quad\quad\quad\  (x,t)\in Q_T.
$$
From the definitions of $u_m^j$, $u_m^{j,k}$ and $\nu^{m,j}$, we get that $u_m$, $v_m$, $v_m^k\in  L^{\infty}(Q_T)\cap L^{\infty}(0,T;W_{l,d}^{1,p}(\Omega))$ with $u_m(x,0)=u_0(x),~\text{a.e.}~x\in\Omega$ and the $W^{1,p}(Q_T)$-gradient Young measure $\nu^{m}\in L^{1}(Q_T;(\mathcal{E}_0^p(\mathbb{R}^n))^{\prime})$ is generated by $\{\nabla v_m^k\}_{k=1}^{\infty}$. Moreover, (\ref{supportset-of-nu_m^j}) and (\ref{gradient-of-u_m^j}) yield
\begin{equation}
	\nabla v_m(x,t)=\langle \nu_{x,t}^m,\mathrm{id}\rangle,\quad\text{a.e.}~(x,t)\in Q_T,\label{gradientforum}
\end{equation}
\begin{equation}
	\operatorname{supp}\nu_{x,t}^m\subseteq\left\{\xi\in\mathbb{R}^n:\varphi^{**}(\xi)=\varphi(\xi)\right\},\quad\text{a.e.}~(x,t)\in Q_T.\label{suppform}
\end{equation}
To establish the existence of Young measure solutions to the problem ($\mathcal{P}_{w}$), we require uniform estimates for the approximate solutions $u_m$, $v_m$ and the Young measures $\nu^m$.
\begin{lem}\label{secondlemma}
	The approximate solution $u_m$, $v_m$, and the Young measure $\nu^m$ defined above satisfy
	\begin{equation}
		\iint_{Q_T}\left(\alpha(x)\langle\nu^m,q\rangle\cdot\nabla\eta+\frac{\partial u_m}{\partial t}\eta+\frac{v_m-f}{(w_m)^2}\eta\right)dxdt=0,\label{equilibrium-equation-of-u_m-and-nu^m}
	\end{equation}
	for any $\eta\in C^{\infty}(\overline{Q_T})$. Furthermore, there exists a positive constant $M$ that depends only on $\Omega$, $T$, $M_1$, $M_2$ and $\gamma_2$, but not on $m$, such that
	\begin{equation}
		\sup_{0\leq t\leq T}\left\|v_m\right\|_{W^{1,p}(\Omega)}\leq M,\quad l\leq v_m(x,t)\leq d,\label{boundness-for-v_m}
	\end{equation} 
	\begin{equation}
		\sup_{0\leq t\leq T}\left\|u_m\right\|_{W^{1,p}(\Omega)}\leq M,\quad l\leq u_m(x,t)\leq d,\label{boundness-for-u_m}
	\end{equation}
	\begin{equation}
		\left\|\frac{\partial u_m}{\partial t}\right\|_{L^2(Q_T)}\leq M,\quad \left\|u_m-v_m\right\|_{L^2(Q_T)}\leq \frac{M}{m},\label{boundness-for-du_m/dt-and-(u_m-v_m)}
	\end{equation}
	\begin{equation}
		\left\|\nu^m\right\|_{L^1(Q_T;(\mathcal{E}_0^p(\mathbb{R}^n))^{\prime})}\leq M,\quad\left\|\langle\nu^m,q\rangle\right\|_{L^{p/(p-1)}(Q_T)}\leq M.\label{boundness-for-|nu^m|_L^1-and-<nu^m,q>}
	\end{equation}
\end{lem}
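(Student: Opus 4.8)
The plan is to derive (\ref{equilibrium-equation-of-u_m-and-nu^m}) as the Rothe assembly of the Euler--Lagrange equations of the discrete minimisers $u_m^j$, and to obtain the five families of estimates from Lemma~\ref{firstlemma} and the growth conditions (\ref{growth-condition-for-varphi*-and-p}). First I would record the Euler--Lagrange equation: since $\varphi^{**}\in C^1(\mathbb{R}^n)$ is convex, $E_m^{**}(\cdot\,;u_m^{j-1})$ is a $C^1$ convex functional, and as $u_m^j$ minimises it over all of $W^{1,p}(\Omega)$ by (\ref{u_m^j-minimum-E_m^*}), its first variation vanishes, i.e.\ for every $\psi\in W^{1,p}(\Omega)$
\[
\int_\Omega\left(\alpha(x)\,q^{**}(\nabla u_m^j)\cdot\nabla\psi+\frac{m}{T}(u_m^j-u_m^{j-1})\psi+\frac{u_m^j-f}{(w_m^j)^2}\psi\right)dx=0.
\]

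The one genuinely non-formal step is to identify the nonlinear flux of the minimiser with the barycentre of $q$, namely $\langle\nu_x^{m,j},q\rangle=q^{**}(\nabla u_m^j(x))$ a.e.\ in $\Omega$. I would obtain this from the rigidity (equality case) in Jensen's inequality: from the equality $\varphi^{**}(\nabla u_m^j(x))=\langle\nu_x^{m,j},\varphi^{**}\rangle$ already established, convexity of $\varphi^{**}$ forces $\operatorname{supp}\nu_x^{m,j}$ into the contact set $\{\xi:\varphi^{**}(\xi)=\varphi^{**}(\bar\xi)+q^{**}(\bar\xi)\cdot(\xi-\bar\xi)\}$ with $\bar\xi:=\nabla u_m^j(x)$; on this set the nonnegative $C^1$ function $\xi\mapsto\varphi^{**}(\xi)-\varphi^{**}(\bar\xi)-q^{**}(\bar\xi)\cdot(\xi-\bar\xi)$ attains its minimum $0$, hence has vanishing gradient there, so $q^{**}\equiv q^{**}(\bar\xi)$ on $\operatorname{supp}\nu_x^{m,j}$; since $\{\nabla u_m^{j,k}\}_k$ is bounded in $L^p(\Omega)$, $\nu_x^{m,j}$ has unit mass a.e.\ (Lemma~\ref{Fundamental-Theorem-on-YoungMeasure}(iii)), whence $\langle\nu_x^{m,j},q^{**}\rangle=q^{**}(\bar\xi)$, and (\ref{<nu_m^j,p>=<nu_m^j,q>}) gives the claim. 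Then (\ref{equilibrium-equation-of-u_m-and-nu^m}) follows by the usual assembly: in the Euler--Lagrange identity put $\psi=\eta(\cdot,t)$ for $t\in[(j-1)T/m,jT/m)$, integrate over this interval and sum over $j$; there $v_m(\cdot,t)=u_m^j$, $w_m(\cdot,t)=w_m^j$, $\langle\nu_{x,t}^m,q\rangle=q^{**}(\nabla u_m^j(x))$, and $\frac{\partial u_m}{\partial t}=\frac{m}{T}(u_m^j-u_m^{j-1})$ on the interval's interior, so the three terms reassemble into the three integrals of (\ref{equilibrium-equation-of-u_m-and-nu^m}) for every $\eta\in C^\infty(\overline{Q_T})$.

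The estimates are then bookkeeping. The bounds (\ref{boundness-for-v_m})--(\ref{boundness-for-u_m}) are immediate from Lemma~\ref{firstlemma}: $v_m(\cdot,t)$ is always one of the $u_m^j\in W_{l,d}^{1,p}(\Omega)$ and $u_m(\cdot,t)$ a convex combination of $u_m^{j-1}$ and $u_m^j$, so $l\le u_m,v_m\le d$ and $\|u_m(\cdot,t)\|_{W^{1,p}(\Omega)},\|v_m(\cdot,t)\|_{W^{1,p}(\Omega)}\le M_1$ by (\ref{boundness-of-u_m^j-for-W1pnorm-and-L^inftynorm}) and convexity of the norm. For (\ref{boundness-for-du_m/dt-and-(u_m-v_m)}), a direct subinterval computation gives $\left\|\frac{\partial u_m}{\partial t}\right\|_{L^2(Q_T)}^2=\frac{m}{T}\sum_j\|u_m^j-u_m^{j-1}\|_{L^2(\Omega)}^2$ and, using $u_m-v_m=(\lambda_m^j-1)(u_m^j-u_m^{j-1})$ on the $j$-th interval with $\int_0^1(s-1)^2\,ds=\frac{1}{3}$, $\|u_m-v_m\|_{L^2(Q_T)}^2=\frac{T}{3m}\sum_j\|u_m^j-u_m^{j-1}\|_{L^2(\Omega)}^2$; both are controlled by (\ref{boundness-for-m/4T-sum-(u_m^j-u_m^j-1)^2}), the second carrying an extra $m^{-2}$. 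For (\ref{boundness-for-|nu^m|_L^1-and-<nu^m,q>}): $\|\nu_{x,t}^m\|_{(\mathcal{E}_0^p(\mathbb{R}^n))'}=\langle\nu_{x,t}^m,1+|\cdot|^p\rangle$, and by (\ref{growth-condition-for-varphi*-and-p}), $|\xi|^p\le\gamma_1^{-1}\varphi^{**}(\xi)$ together with $\langle\nu_{x,t}^m,\varphi^{**}\rangle=\varphi^{**}(\nabla v_m(x,t))\le\gamma_2|\nabla v_m(x,t)|^p+1$ bounds $\langle\nu_{x,t}^m,|\cdot|^p\rangle$; while $\langle\nu_{x,t}^m,q\rangle=q^{**}(\nabla v_m(x,t))$ with $|q^{**}(\xi)|\le\gamma_2|\xi|^{p-1}+1$ bounds $|\langle\nu_{x,t}^m,q\rangle|^{p/(p-1)}$ by $C(\gamma_2,p)(|\nabla v_m(x,t)|^p+1)$; in both cases the $Q_T$-integral of $|\nabla v_m|^p$ is dominated by the $W^{1,p}$-bound on $v_m$ already proved.

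The main obstacle will be the flux identification $\langle\nu_x^{m,j},q\rangle=q^{**}(\nabla u_m^j(x))$ of the second paragraph: the rest reduces to manipulating the Rothe interpolants and the estimates of Lemma~\ref{firstlemma}, whereas this is precisely where the equality case of Jensen's inequality --- hence the $C^1$ convexity of $\varphi^{**}$ --- is needed, and it is the same mechanism behind the independence identity (\ref{independence-for-uw}) that must hold for the Young measure solution.
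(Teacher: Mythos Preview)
Your proposal is correct; the assembly of the equilibrium equation and the five estimate families follow exactly as you describe. The one place where your route genuinely diverges from the paper is the flux identification $\langle\nu_x^{m,j},q\rangle=q^{**}(\nabla u_m^j(x))$. The paper does \emph{not} argue via Jensen rigidity. Instead it first derives a Young--measure Euler--Lagrange equation: using lower semicontinuity of $v\mapsto\int\alpha\,\varphi^{**}(\nabla v+\epsilon\nabla\eta)$ along the minimising sequence $u_m^{j,k}\rightharpoonup u_m^j$ together with the representation formula, it obtains
\[
E_m^{**}(u_m^j;u_m^{j-1})\le\int_\Omega\alpha(x)\bigl\langle\nu^{m,j},\varphi^{**}(\cdot+\epsilon\nabla\eta)\bigr\rangle\,dx+\text{(lower order)},
\]
and differentiating in $\epsilon$ gives directly $\int_\Omega\alpha\langle\nu^{m,j},q^{**}\rangle\cdot\nabla\eta+\cdots=0$; this, combined with (\ref{<nu_m^j,p>=<nu_m^j,q>}), already yields (\ref{equilibrium-equation-of-u_m-and-nu^m}) without any flux identification. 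Only afterwards does the paper write down the classical Euler--Lagrange equation and, by comparing the two, claim the pointwise identity (\ref{p(gradient-of-u_m^j)=<nu_m^j,p>=<nu_m^j,q>}), which it then uses solely for the $L^{p/(p-1)}$ bound on $\langle\nu^{m,j},q\rangle$. Your Jensen--rigidity argument supplies this pointwise identity cleanly from $\langle\nu^{m,j},\varphi^{**}\rangle=\varphi^{**}(\nabla u_m^j)$ and the $C^1$ convexity of $\varphi^{**}$; it is more self-contained and avoids the (slightly delicate) comparison of two weak equations. The paper's variational route, on the other hand, produces the Young--measure equilibrium equation directly. A second minor difference: for the bound on $\|\nu^m\|_{L^1(Q_T;(\mathcal{E}_0^p)')}$ the paper passes through the generating sequence $\{\nabla v_m^k\}$ and the uniform estimate (\ref{iteration-inequlity-for-the-first-term-of-E_m*-with-u_m^jk}), whereas you use the already established identity $\langle\nu^m,\varphi^{**}\rangle=\varphi^{**}(\nabla v_m)$; both give the same bound.
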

\begin{proof}
	Let $\eta\in C^{\infty}(\overline{\Omega})$, $-1<\epsilon<1$. Then there
	exists $C>0$ such that
	$$
	\varphi^{**}(\xi+\epsilon\nabla\eta)\leq C(1+\left|\xi\right|^p),\quad\xi\in\mathbb{R}^n.
	$$
	Again by applying Theorem 2.4 in \cite{acerbi1984} and the representation formula (\ref{representation-formula-YoungMeasure}), we get
	\begin{align}
		&E_m^{**}(u_m^j;u^{j-1}_m)\notag\\
		=&\int_{\Omega} \left(\alpha(x)\left\langle\nu^{m,j},\varphi^{**} \right\rangle+\frac{m}{2T}(u_m^j-u_m^{j-1})^2+\frac{(u_m^j-f)^2}{2(w_m^j)^2}\right)dx\notag\\
		\leq&\int_{\Omega} \alpha(x)\varphi^{**}(\nabla u_m^j+\epsilon\nabla \eta)+\int_{\Omega}\frac{m}{2T}(u_m^j+\epsilon\eta-u_m^{j-1})^2 dx\notag\\
		&+\int_{\Omega}\frac{(u_m^j+\epsilon\eta-f)^2}{2(w_m^j)^2}dx\notag\\
		\leq&\liminf\limits_{k\rightarrow\infty}\int_{\Omega} \alpha(x)\varphi^{**}(\nabla u_m^{j,k}+\epsilon\nabla \eta)dx+\int_{\Omega}\frac{m}{2T}(u_m^j+\epsilon\eta-u_m^{j-1})^2 dx\notag\\
		&+\int_{\Omega}\frac{(u_m^j+\epsilon\eta-f)^2}{2(w_m^j)^2}dx\notag\\
		=&\int_{\Omega} \alpha(x)\left\langle\nu^{m,j},\varphi^{**}(\cdot+\epsilon\nabla\eta) \right\rangle dx+\int_{\Omega}\frac{m}{2T}(u_m^j+\epsilon\eta-u_m^{j-1})^2 dx\notag\\
		&+\int_{\Omega}\frac{(u_m^j+\epsilon\eta-f)^2}{2(w_m^j)^2}dx,\notag
	\end{align}
	which implies the equilibrium equation
	\begin{equation}
		\int_{\Omega}\left(\alpha(x)\langle\nu^{m,j},q^{**}\rangle\cdot\nabla\eta+\frac{m}{T}(u_m^j-u_m^{j-1})\eta+\frac{u_m^j-f}{(w_m^j)^2}\eta\right)dx=0,\label{equilibrium-equation-of-u_m^j-and-nu^m,j}
	\end{equation}
	for any $\eta\in C^{\infty}(\overline{\Omega})$. Setting the G\^{a}teaux derivative of $E_m^{**}(\cdot;u_m^{j-1})$ to zero at the minimum $u_m^j$, we obtain
	\begin{equation}
		\int_{\Omega}\left(\alpha(x)q^{**}(\nabla u_m^j)\cdot\nabla\eta+\frac{m}{T}(u_m^j-u_m^{j-1})\eta+\frac{u_m^j-f}{(w_m^j)^2}\eta\right)dx=0,\label{gateaux-derivative-of-E_m^*-at-u_m^j}
	\end{equation}
	for any $\eta\in C^{\infty}(\overline{\Omega})$. Then, (\ref{<nu_m^j,p>=<nu_m^j,q>}), (\ref{equilibrium-equation-of-u_m^j-and-nu^m,j}) and (\ref{gateaux-derivative-of-E_m^*-at-u_m^j}) show that
	\begin{equation}
		\langle\nu^{m,j}_x,q \rangle=\langle\nu^{m,j}_x,q^{**} \rangle=q^{**}(\nabla u_m^j(x)),\quad\text{a.e.}~x\in\Omega,\label{p(gradient-of-u_m^j)=<nu_m^j,p>=<nu_m^j,q>}
	\end{equation}
	\begin{equation}
		\int_{\Omega}\left(\alpha(x)\langle\nu^{m,j},q\rangle\cdot\nabla\eta+\frac{m}{T}(u_m^j-u_m^{j-1})\eta+\frac{u_m^j-f}{(w_m^j)^2}\eta\right)dx=0,\label{gateaux-derivative-of-E_m-at-u_m^j}
	\end{equation}
	for any $\eta\in C^{\infty}(\overline{\Omega})$. By Lemma \ref{firstlemma}, we get the estimate
	$$
	\begin{aligned}
		\left\|\langle\nu^{m,j},q\rangle\right\|_{L^{p/(p-1)}(\Omega)}&=\left\|q^{**}(\nabla u_m^j)\right\|_{L^{p/(p-1)}(\Omega)}\\
		&\leq2\max\{\mathrm{meas}(\Omega),1\}+ 2\gamma_2(M_1+1).\label{boundness-for-<nu_m^j,q>}
	\end{aligned}
	$$
	According to (\ref{gateaux-derivative-of-E_m-at-u_m^j}), we deduce
	$$
	\iint_{Q_T}\left(\alpha(x)\langle\nu^m,q\rangle\cdot\nabla\eta+\frac{\partial u_m}{\partial t}\eta+\frac{v_m-f}{w_m^2}\eta\right)dxdt=0,\quad \eta\in C^{\infty}(\overline{Q_T}).
	$$
	
	From the direct calculation, we see that
	$$
	\begin{aligned}
		\left\|\langle\nu^m,q\rangle\right\|_{L^{p/(p-1)}(Q_T)}\leq&\sup_{1\leq j\leq m}\left\|\langle\nu^{m,j},q\rangle\right\|_{L^{p/(p-1)}(\Omega)}T^{(p-1)/p}\\
		\leq&2\max\{\mathrm{meas}(\Omega),1\}(T+1)+ 2\gamma_2(M_1+1)(T+1).
	\end{aligned}
	$$
	Note that $\{\nabla v_m^k\}_{k=1}^{\infty}$ is the $W^{1, p}(Q_T)$-gradient generating sequence of $\nu^m$. We derive
	\begin{align}
		\left\| \nu^m\right\|_{L^1(Q_T;(\mathcal{E}_0^p(\mathbb{R}^n))^{\prime})}&=\iint_{Q_T}\sup_{\left\|f\right\|_{\mathcal{E}^p(\mathbb{R}^n)\leq 1}}\left|\langle \nu_{x,t}^m,f\rangle\right|dxdt\notag\\
		\leq&\iint_{Q_T}\int_{\mathbb{R}^n}(1+\left|\xi\right|^p)d\nu_{x,t}^m(\xi)dxdt\notag\\
		=&\mathrm{meas}(\Omega)T+\iint_{Q_T}\langle \nu_{x,t}^m,\left|\mathrm{id}\right|^p\rangle dxdt\notag\\
		=&\mathrm{meas}(\Omega)T+\lim_{k\rightarrow\infty}\iint_{Q_T}\left|\nabla v_m^k\right|^p dxdt\notag\\
		=&\mathrm{meas}(\Omega)T+\lim_{k\rightarrow\infty}\sum_{j=1}^{m}\int_{(j-1)\frac{T}{m}}^{j\frac{T}{m}}\int_{\Omega}\left|\nabla u_m^{j,k}\right|^p dxdt\notag\\
		\leq&\mathrm{meas}(\Omega)T+\sup_{k\geq 1}\sup_{1\leq j\leq m}\left\|\nabla u_m^{j,k}\right\|_{L^p(\Omega)}^p T.\notag
	\end{align}
	Inequality (\ref{boundness-for-|nu^m|_L^1-and-<nu^m,q>}) follows from the above inequality and (\ref{iteration-inequlity-for-the-first-term-of-E_m*-with-u_m^jk}).
	
	Now we prove (\ref{boundness-for-du_m/dt-and-(u_m-v_m)}). By (\ref{boundness-for-m/4T-sum-(u_m^j-u_m^j-1)^2}), we see that
	$$
	\begin{aligned}
		\left\|\frac{\partial u_m}{\partial t}\right\|_{L^2(Q_T)}^2=\frac{m}{T}\sum_{j=1}^{m}\int_{\Omega}(u_m^j-u_m^{j-1})^2dx\leq M_2,
	\end{aligned}
	$$
	$$
	\begin{aligned}
		\left\|u_m-v_m\right\|_{L^2(Q_T)}^2\leq\frac{T}{m}\sum_{j=1}^{m}\int_{\Omega}(u_m^j-u_m^{j-1})^2dx\leq \frac{T^2}{m^2}M_2.
	\end{aligned}
	$$ 
	Furthermore, applying Lemma \ref{firstlemma} and (\ref{boundness-of-u_m^j-for-W1pnorm-and-L^inftynorm}), it is straightforward to see that inequalities (\ref{boundness-for-v_m}) and (\ref{boundness-for-u_m}) hold, thus completing the proof.     
\end{proof}
Lemma \ref{secondlemma} implies that there exists a subsequence of $\{v_m\}_{m=1}^{\infty}$, denoted by itself, and a function $v_{w}\in L^{\infty}(0,T;W_{l,d}^{1,p}(\Omega))\cap L^{\infty}(Q_T)$, such that as $m\rightarrow\infty$
\begin{align}
	&v_m\rightharpoonup v_{w}~\text{weakly in}~L^p(Q_T),\notag\\
	&\nabla v_m\rightharpoonup \nabla v_{w}~\text{weakly in}~L^p(Q_T).\label{convergence-of-v_m}
\end{align}
Owing to (\ref{boundness-for-u_m}), (\ref{boundness-for-du_m/dt-and-(u_m-v_m)}), and then utilizing Rellich theorem, we conclude that there exists a subsequence of $\{u_m\}_{m=1}^{\infty}$, still denoted by itself, and a function $u_w\in L^{\infty}(0,T;W_{l,d}^{1,p}(\Omega))\cap L^{\infty}(Q_T)$, such that as $m\rightarrow\infty$
\begin{align}
	&u_m \rightarrow u_{w}~\text{strongly in}~L^p(Q_T),\notag\\
	&\nabla u_m\rightharpoonup \nabla u_{w}~\text{weakly in}~ L^p(Q_T),\notag\\
	&\frac{\partial u_m}{\partial t}\rightharpoonup\frac{\partial u_{w}}{\partial t}~\text{weakly in}~L^2(Q_T).\label{convergence-of-u_m}
\end{align}
On the other hand, we apply Remark \ref{firstremark} following Lemma \ref{WangChunpeng-YoungMeasure} on $\nu^m$ to extract a subsequence, still indexed by $m\rightarrow\infty$, along which the $\nu^m$ converges to a $W^{1,p}(Q_T)$-gradient Young measure $\nu^w$. More precisely,
\begin{equation}
	\langle\nu^m,\psi\rangle\rightharpoonup\langle\nu^{w},\psi\rangle\label{convergenceformeasure}
\end{equation}
weakly$^\ast$ in $L^{\infty}(Q_T)$ for $\psi\in C_0(\mathbb{R}^n)$, weakly in $L^1(Q_T)$ for $\psi\in \mathcal{E}_0^1(\mathbb{R}^n)$, and weakly in the biting sense for $\psi\in \mathcal{E}_0^p(\mathbb{R}^n)$. In particular, since $\vec{q}\in\mathcal{E}_0^1(\mathbb{R}^n)$ and $\vec{q}\cdot\mathrm{id}\in\mathcal{E}_0^p(\mathbb{R}^n)$, we have from (\ref{gradientforum}) and (\ref{boundness-for-du_m/dt-and-(u_m-v_m)}) that
\begin{align}
	&\langle\nu^m,q\rangle\rightharpoonup\langle\nu^{w},q\rangle~\text{weakly in}~L^{p/(p-1)}(Q_T),\notag\\
	&\langle\nu^m,\mathrm{id}\rangle\rightharpoonup\langle\nu^{w},\mathrm{id}\rangle~\text{weakly in}~L^p(Q_T),\notag\\
	&\langle\nu^m,q\cdot\mathrm{id}\rangle\rightharpoonup\langle\nu^{w},q\cdot\mathrm{id}\rangle~\text{in the biting sense}.\label{particular-convergence-of-nu^m}
\end{align}

Next, we show that the problem ($\mathcal{P}_{w}$) admits a Young measure solution $u_w\in W(0,T)$.
\begin{lem}\label{firsttheorem}
	Assume that $f\in W^{1,p}(\Omega)$ and \emph{(\ref{assumption-for-u0})} hold, then problem \emph{($\mathcal{P}_{w}$)} possesses a Young measure solution $u_w$ such that
	\begin{equation}
		l\leq u_{w}(x,t)\leq d,\quad \emph{a.e.}~(x,t)\in Q_T,\label{l<u_w<d}
	\end{equation} 
	and
	\begin{equation}
		\left\| u_{w}\right\|_{L^{\infty}(0,T;W^{1,p}(\Omega))}\leq M,\quad\left\|\frac{\partial u_{w}}{\partial t}\right\|_{L^2(Q_T)}\leq M,\label{boundness-for-u_w-L^infty(0,T;W^1,p)-du_w/dt-L^2} 
	\end{equation}
	where $M>0$ only depends on $\Omega$, $T$, $M_1$, $M_2$ and $\gamma_2$.
\end{lem}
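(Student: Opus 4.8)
The plan is to pass to the limit as $m\to\infty$ in the approximate quantities $u_m$, $v_m$, $\nu^m$ constructed above and to check that the limit pair $(u_w,\nu^w)$, with $u_w$ as in (\ref{convergence-of-u_m}) and $\nu^w$ as in (\ref{convergenceformeasure}), is a Young measure solution of $(\mathcal{P}_w)$. First note that $u_w=v_w$ a.e.\ in $Q_T$: by (\ref{convergence-of-v_m}) $v_m\rightharpoonup v_w$ weakly in $L^p(Q_T)$, while $\|u_m-v_m\|_{L^2(Q_T)}\le M/m\to0$ by (\ref{boundness-for-du_m/dt-and-(u_m-v_m)}) and $u_m\to u_w$ strongly in $L^p(Q_T)$, so the two weak limits coincide; in particular $\nabla v_m\rightharpoonup\nabla u_w$ weakly in $L^p(Q_T)$, and (using the $L^\infty$-bound in (\ref{boundness-for-u_m})) $v_m\to u_w$ strongly in $L^2(Q_T)$. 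Passing to the limit in (\ref{equilibrium-equation-of-u_m-and-nu^m}) is then straightforward: $\alpha\langle\nu^m,q\rangle\rightharpoonup\alpha\langle\nu^w,q\rangle$ weakly in $L^{p/(p-1)}(Q_T)$ and $\partial_t u_m\rightharpoonup\partial_t u_w$ weakly in $L^2(Q_T)$ are paired against $\nabla\eta$ and $\eta$, while $(v_m-f)/w_m^2\to(u_w-f)/w^2$ in $L^1(Q_T)$ because $v_m\to u_w$ and $w_m\to w$ in $L^2(Q_T)$ (the time-step interpolant of $w\in W(0,T)$ converges since $\partial_t w\in L^2(Q_T)$) and $w_m,w\ge l>0$; this gives (\ref{equilibrium-equation-for-uw}). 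Comparing the two weak $L^p(Q_T)$-limits of $\langle\nu^m,\mathrm{id}\rangle=\nabla v_m$, namely $\nabla u_w$ and $\langle\nu^w,\mathrm{id}\rangle$ (by (\ref{particular-convergence-of-nu^m})), yields (\ref{gradient-of-uw}).

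For the estimates (\ref{l<u_w<d})--(\ref{boundness-for-u_w-L^infty(0,T;W^1,p)-du_w/dt-L^2}) and the initial trace (\ref{trace-of-uw}), I would improve the convergence of $u_m$. Since $\partial_t u_m$ is bounded in $L^2(Q_T)$ and, thanks to the $L^\infty$-bound in (\ref{boundness-for-u_m}) and the compact Sobolev embedding, each time-slice $u_m(\cdot,t)$ ranges in a precompact subset of $L^2(\Omega)$, the Aubin--Lions--Simon lemma gives $u_m\to u_w$ in $C([0,T];L^2(\Omega))$ along the subsequence; hence $u_w\in C([0,T];L^2(\Omega))$, $u_w(\cdot,0)=\lim_m u_m(\cdot,0)=f$ (so (\ref{trace-of-uw}) holds), and $u_m(\cdot,T)\to u_w(\cdot,T)$ in $L^2(\Omega)$, while (\ref{l<u_w<d}) and (\ref{boundness-for-u_w-L^infty(0,T;W^1,p)-du_w/dt-L^2}) follow from (\ref{boundness-for-u_m}), (\ref{boundness-for-du_m/dt-and-(u_m-v_m)}) and weak lower semicontinuity. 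For the support condition (\ref{support-of-uw}), set $K=\{\xi\in\mathbb{R}^n:\varphi(\xi)=\varphi^{**}(\xi)\}$, which is closed; for any $\psi\in C_0(\mathbb{R}^n)$ supported in the open set $\mathbb{R}^n\setminus K$ one has $\langle\nu^m_{x,t},\psi\rangle=0$ a.e.\ by (\ref{suppform}), hence $\langle\nu^w_{x,t},\psi\rangle=0$ a.e.\ by the weak$^\ast$ convergence (\ref{convergenceformeasure}); running this over a countable dense family of such $\psi$ gives $\operatorname{supp}\nu^w_{x,t}\subseteq K$ for a.e.\ $(x,t)$.

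The core of the argument, and the step I expect to be the main obstacle, is the independence property (\ref{independence-for-uw}), which I would obtain by a Minty-type monotonicity argument anchored to an energy identity. At the discrete level, combining (\ref{p(gradient-of-u_m^j)=<nu_m^j,p>=<nu_m^j,q>}), the inclusion $\operatorname{supp}\nu^{m,j}_x\subseteq K$ (so $q=q^{**}$ there), and the fact that the gradient $q^{**}=\nabla\varphi^{**}$ of the convex $C^1$ function $\varphi^{**}$ is constant on the contact set of its supporting hyperplane at $\nabla u^j_m(x)$ — which contains $\operatorname{supp}\nu^{m,j}_x$ because $\langle\nu^{m,j}_x,\varphi^{**}\rangle=\varphi^{**}(\nabla u^j_m(x))$ (cf.\ Lemma 1.2 in \cite{demoulini1996high}) — gives the discrete independence $\langle\nu^m_{x,t},q\cdot\mathrm{id}\rangle=q^{**}(\nabla v_m)\cdot\nabla v_m$ a.e. Testing the discrete Euler equation (\ref{gateaux-derivative-of-E_m-at-u_m^j}) with $\eta=u^j_m$, multiplying by $T/m$, summing over $j$ and telescoping the identity $\sum_j(u^j_m-u^{j-1}_m)u^j_m=\tfrac12\big(u_m(\cdot,T)^2-f^2\big)+\tfrac12\sum_j(u^j_m-u^{j-1}_m)^2$ leads to
\begin{align*}
\iint_{Q_T}\alpha\,q^{**}(\nabla v_m)\cdot\nabla v_m\,dxdt
&=-\tfrac12\int_\Omega\big(u_m(\cdot,T)^2-f^2\big)dx\\
&\quad-\tfrac12\sum_{j=1}^m\int_\Omega(u^j_m-u^{j-1}_m)^2dx-\iint_{Q_T}\frac{v_m-f}{w_m^2}\,v_m\,dxdt.
\end{align*}
Letting $m\to\infty$ and using $u_m(\cdot,T)\to u_w(\cdot,T)$ in $L^2(\Omega)$, $\sum_j\|u^j_m-u^{j-1}_m\|_{L^2(\Omega)}^2\le(T/m)M_2\to0$ by (\ref{boundness-for-m/4T-sum-(u_m^j-u_m^j-1)^2}), and $v_m\to u_w$, $w_m\to w$ in $L^2(Q_T)$, the right-hand side tends to $-\tfrac12\int_\Omega(u_w(\cdot,T)^2-f^2)dx-\iint_{Q_T}\frac{u_w-f}{w^2}u_w\,dxdt$, which equals $\iint_{Q_T}\alpha\langle\nu^w,q\rangle\cdot\nabla u_w\,dxdt$ by testing (\ref{equilibrium-equation-for-uw}) with $\eta=u_w$ (admissible since $u_w\in C([0,T];L^2(\Omega))$, $\partial_t u_w\in L^2(Q_T)$, $\langle\nu^w,q\rangle\in L^{p/(p-1)}(Q_T)$). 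Hence $\iint_{Q_T}\alpha\,q^{**}(\nabla v_m)\cdot\nabla v_m\to\iint_{Q_T}\alpha\langle\nu^w,q\rangle\cdot\nabla u_w$.

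Since $q^{**}$ is monotone and $\alpha\ge\alpha_1>0$,
$$0\le\iint_{Q_T}\alpha\big(q^{**}(\nabla v_m)-q^{**}(\nabla u_w)\big)\cdot(\nabla v_m-\nabla u_w)\,dxdt;$$
expanding the integrand, the term $\iint_{Q_T}\alpha\,q^{**}(\nabla v_m)\cdot\nabla v_m$ has the limit just computed, and the remaining three terms converge by the weak convergences $q^{**}(\nabla v_m)=\langle\nu^m,q\rangle\rightharpoonup\langle\nu^w,q\rangle$ in $L^{p/(p-1)}(Q_T)$ and $\nabla v_m\rightharpoonup\nabla u_w$ in $L^p(Q_T)$, so the whole right-hand side tends to $0$; being the integral of a nonnegative quantity, $\big(q^{**}(\nabla v_m)-q^{**}(\nabla u_w)\big)\cdot(\nabla v_m-\nabla u_w)\to0$ in $L^1(Q_T)$. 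Passing to the biting limit in this same expanded product, and identifying the biting limit of $q^{**}(\nabla v_m)\cdot\nabla v_m=\langle\nu^m,q\cdot\mathrm{id}\rangle$ with $\langle\nu^w,q\cdot\mathrm{id}\rangle$ by (\ref{particular-convergence-of-nu^m}) and those of the mixed terms with $\langle\nu^w,q\rangle\cdot\nabla u_w$ and $q^{**}(\nabla u_w)\cdot\nabla u_w$, one concludes $\langle\nu^w_{x,t},q\cdot\mathrm{id}\rangle-\langle\nu^w_{x,t},q\rangle\cdot\nabla u_w(x,t)=0$ a.e., which together with (\ref{gradient-of-uw}) is exactly (\ref{independence-for-uw}). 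The points requiring care are the admissibility of $u^j_m$ and of $u_w$ as test functions, the identification of the biting limits of the individual factors above, and the passage from equality of integrals to the pointwise relation via monotonicity; with (\ref{independence-for-uw}) in hand, $(u_w,\nu^w)$ meets every requirement in the definition of a Young measure solution to $(\mathcal{P}_w)$, which completes the proof.
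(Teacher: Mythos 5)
Your proposal is correct, and it coincides with the paper's argument in its overall skeleton (limit passage in (\ref{equilibrium-equation-of-u_m-and-nu^m}) using (\ref{convergence-of-u_m}), (\ref{particular-convergence-of-nu^m}) and the strong $L^1$ convergence of $(v_m-f)/w_m^2$; identification $u_w=v_w$; (\ref{gradient-of-uw}), (\ref{support-of-uw}), the trace, and the bounds (\ref{l<u_w<d})--(\ref{boundness-for-u_w-L^infty(0,T;W^1,p)-du_w/dt-L^2}) from weak lower semicontinuity), but it takes a genuinely different route at the heart of the proof, the independence property (\ref{independence-for-uw}). The paper obtains the discrete identity $\langle\nu^{m,j},q\cdot\mathrm{id}\rangle=\langle\nu^{m,j},q\rangle\cdot\langle\nu^{m,j},\mathrm{id}\rangle$ by testing the Euler--Lagrange relations along the minimizing sequence $u_m^{j,k}$ and using weak $L^1$ convergence of $q^{**}(\nabla u_m^{j,k})\cdot\nabla u_m^{j,k}$, and then passes to the limit by inserting the localized test function $v_m\psi$, $\psi\in C_0^\infty(Q_T)$, into both (\ref{equilibrium-equation-of-u_m-and-nu^m}) and (\ref{equilibrium-equation-for-uw}), which gives weak $L^1$ convergence of $\langle\nu^m,q\rangle\cdot\langle\nu^m,\mathrm{id}\rangle$ to $\langle\nu^w,q\rangle\cdot\langle\nu^w,\mathrm{id}\rangle$, matched against the biting limit of $\langle\nu^m,q\cdot\mathrm{id}\rangle$. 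You instead derive the discrete identity from the supporting-hyperplane/contact-set argument (legitimate here, since the paper has already established $\varphi^{**}(\nabla u_m^j)=\langle\nu^{m,j},\varphi^{**}\rangle$ and (\ref{p(gradient-of-u_m^j)=<nu_m^j,p>=<nu_m^j,q>})), and you pass to the limit via a global energy identity obtained by telescoping the Rothe scheme, combined with Minty monotonicity of $q^{**}$ and the same biting identification. Both routes are sound; yours yields the slightly sharper pointwise relation $\langle\nu^w,q\cdot\mathrm{id}\rangle=\langle\nu^w,q\rangle\cdot\nabla u_w$ directly, but at the cost of extra machinery the paper avoids: you need $u_m(\cdot,T)\to u_w(\cdot,T)$ in $L^2(\Omega)$ (your Aubin--Lions/Arzel\`a--Ascoli step, which indeed works, but only because of the uniform $L^\infty$ bound, since $W^{1,p}(\Omega)$ alone need not embed compactly into $L^2(\Omega)$ for $p$ close to $1$ and $n\geq 3$), and you must extend (\ref{gateaux-derivative-of-E_m-at-u_m^j}) and (\ref{equilibrium-equation-for-uw}) to the non-smooth test functions $u_m^j$ and $u_w$ by density, together with the $L^2$-valued integration by parts in time; the paper's localized choice $v_m\psi$ sidesteps the endpoint terms and never invokes the convexity of $\varphi^{**}$ at this stage (it is used only in the uniqueness Lemma \ref{secondtheorem}). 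These flagged steps are routine given the estimates already established, so I see no gap in your argument.
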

\begin{proof}
	Noting (\ref{boundness-for-du_m/dt-and-(u_m-v_m)}) implies
	$$
	\lim\limits_{m\rightarrow\infty}\left\|u_m-v_m\right\|_{L^2(Q_T)}=0,
	$$
	one can subsequently get that
	\begin{equation}
		v_m\rightarrow u_w~\text{strongly in}~L^2(Q_T),~\text{as}~m\rightarrow\infty,\label{convergence-of-v_m-strongly}
	\end{equation}
	\begin{equation}
		u_{w}(x,t)=v_{w}(x,t),\quad\text{a.e.}~ (x,t)\in Q_T.\label{u_w=v_w}
	\end{equation}
	According to the definition of $w_m$, and the condition that $w\in C(0,T;L^2(\Omega))$, we obtain
	\begin{equation}
		w_m\rightarrow w~\text{strongly in}~L^1(Q_T),~\text{as}~m\rightarrow\infty.\label{convergence-of-w_m}
	\end{equation}
	Combining (\ref{l<w<d}), (\ref{convergence-of-v_m-strongly}), (\ref{convergence-of-w_m}) and using Lebesgue’s dominated convergence theorem, we have
	\begin{equation}
		\frac{v_m-f}{w_m^2}\rightarrow\frac{u_{w}-f}{w^2}~\text{strongly in}~L^1(Q_T),~\text{as}~m\rightarrow\infty.\label{convergence-of-(v_m-u_0)/w_m^2}
	\end{equation}
	
	Now we are ready to verify that $u_w$ with $\nu^w$ is a Young measure solution to the problem ($\mathcal{P}_w$). First, letting $m\rightarrow\infty$ in (\ref{equilibrium-equation-of-u_m-and-nu^m}) with (\ref{convergence-of-u_m}), (\ref{particular-convergence-of-nu^m}) and (\ref{convergence-of-(v_m-u_0)/w_m^2}) leads to (\ref{equilibrium-equation-for-uw}) for any $\eta\in C^{\infty}(\overline{Q_T})$. Second, (\ref{gradient-of-uw}) follows from (\ref{gradientforum}), (\ref{convergence-of-v_m}), (\ref{particular-convergence-of-nu^m}) and (\ref{u_w=v_w}). Third, (\ref{support-of-uw}) can be deduced from (\ref{suppform}) and (\ref{convergenceformeasure}), while (\ref{boundness-for-du_m/dt-and-(u_m-v_m)}) guarantees that (\ref{trace-of-uw}) holds in the sense of trace.
	
	Finally, we prove the independence property (\ref{independence-for-uw}). Recall that $\{u_m^{j,k}\}_{k=1}^{\infty}$ serves as a minimizing sequence to the variational principle $E_m^{**}(\cdot;u_m^{j-1})$ converging to $u_m^j$ weakly in $W^{1,p}(\Omega)$, strongly in $L^p(\Omega)$ and weakly$^\ast$ in $L^{\infty}(\Omega)$. In addition, $\{\nabla u_m^{j,k}\}_{k=1}^{\infty}$ generates the $W^{1,p}(\Omega)$-gradient Young measure $\nu^{m,j}$. For all $\zeta\in W^{1,p}(\Omega)$, we see that
	$$
	\begin{aligned}
		&\lim\limits_{k\rightarrow\infty}\int_{\Omega}\left(\alpha(x)q^{**}(\nabla u_m^{j,k})\cdot\nabla\zeta+\frac{m}{T}(u_m^{j,k}-u_m^{j-1})\zeta+\frac{u_m^{j,k}-f}{(w_m^j)^2}\zeta\right)dx\\
		=&\int_{\Omega}\left(\alpha(x)\langle \nu^{m,j},q^{**}\rangle\cdot\nabla\zeta+\frac{m}{T}(u_m^j-u_m^{j-1})\zeta+\frac{u_m^j-f}{(w_m^j)^2}\zeta\right)dx,
	\end{aligned}
	$$
	and
	$$
	\begin{aligned}
		&\lim_{k\rightarrow\infty}\left|\int_{\Omega}\alpha(x)\left(q^{**}(\nabla u_m^{j,k})-\langle \nu^{m,j},q^{**}\rangle\right)\cdot\nabla\left( u_m^{j,k}-u_m^j\right)dx \right|\\
		=&\lim_{k\rightarrow\infty}\left|\int_{\Omega}\frac{m}{T}(u_m^j-u_m^{j,k})^2 dx+\int_{\Omega}\frac{(u_m^{j,k}-u_m^j)^2}{(w_m^j)^2}dx\right|\\
		\leq& \lim_{k\rightarrow\infty}\left(\frac{m}{T}+\frac{1}{l^2}\right)\left\|u_m^{j,k}-u_m^j\right\|^2_{L^2(\Omega)}=0.
	\end{aligned}
	$$
	Since $q^{**}(\nabla u_m^{j,k})\cdot\nabla u_m^{j,k}$ converges weakly to $\langle\nu^{m,j},q^{**}\cdot\mathrm{id}\rangle$ in $L^1(\Omega)$, $q^{**}(\nabla u_m^{j,k})$ converges weakly to $\langle\nu^{m,j},q^{**}\rangle$ in $L^{p/(p-1)}(\Omega)$ and $\nabla u_m^{j,k}$ converges weakly to $\langle\nu^{m,j},\mathrm{id}\rangle$ in $L^{p}(\Omega)$ as $k\rightarrow\infty$, it can be concluded that
	$$
	\langle\nu_x^{m,j},q^{**}\cdot\mathrm{id}\rangle=\langle\nu_x^{m,j},q^{**}\rangle\cdot\langle\nu_x^{m,j},\mathrm{id}\rangle,\quad\text{a.e.}~x\in\Omega.
	$$
	Thus (\ref{supportset-of-nu_m^j}) implies that
	$$
	\langle\nu_x^{m,j},q\cdot\mathrm{id}\rangle=\langle\nu_x^{m,j},q\rangle\cdot\langle\nu_x^{m,j},\mathrm{id}\rangle,\quad\text{a.e.}~x\in\Omega.
	$$
	By the definition of $\nu^m$, we have
	$$
	\langle\nu_{x,t}^{m},q\cdot\mathrm{id}\rangle=\langle\nu_{x,t}^{m},q\rangle\cdot\langle\nu_{x,t}^{m},\mathrm{id}\rangle,\quad\text{a.e.}~(x,t)\in Q_T.
	$$
	Choosing $v_m\psi$ for $\psi\in C_0^{\infty}(Q_T)$ as a test function in (\ref{equilibrium-equation-for-uw}), (\ref{equilibrium-equation-of-u_m-and-nu^m}), we notice
	\begin{align}
		&\left|\iint_{Q_T}\alpha\left(\langle\nu^m,q\rangle-\langle\nu^{w},q\rangle\right)\cdot\langle\nu^m,\mathrm{id}\rangle\psi dxdt\right|\notag\\
		\leq&\left|\iint_{Q_T}\alpha\left(\langle\nu^m,q\rangle-\langle\nu^{w},q\rangle\right)\cdot\nabla(v_m\psi)dxdt\right|\notag\\
		&+\left|\iint_{Q_T}v_m\alpha\left(\langle\nu^m,q\rangle-\langle\nu^{w},q\rangle\right)\cdot \nabla\psi dxdt\right|\notag\\
		\leq&\left|\iint_{Q_T}u_{w}\alpha\left(\langle\nu^m,q\rangle-\langle\nu^{w},q\rangle\right)\cdot\nabla\psi dxdt\right|\notag\\
		&+\left|\iint_{Q_T}\alpha(v_m-u_{w})\left(\langle\nu^m,q\rangle-\langle\nu^{w},q\rangle\right)\cdot\nabla\psi dxdt\right|\notag\\
		&+\left|\iint_{Q_T}\left(\frac{\partial u_m}{\partial t}-\frac{\partial u_{w}}{\partial t}\right)u_{w}\psi dxdt\right|\notag\\
		&+\left|\iint_{Q_T}\left(\frac{\partial u_m}{\partial t}-\frac{\partial u_{w}}{\partial t}\right)(v_m-u_{w})\psi dxdt\right|\notag\\
		&+\left|\iint_{Q_T}\left(\frac{v_m-f}{w_m^2}-\frac{u_{w}-f}{w^2}\right)v_m\psi dxdt\right|.\notag
	\end{align}
	Combining this with (\ref{convergence-of-u_m}), (\ref{particular-convergence-of-nu^m}), (\ref{convergence-of-v_m-strongly}), (\ref{convergence-of-(v_m-u_0)/w_m^2}),
	and applying H\"{o}lder inequality, then we have
	$$
	\left|\iint_{Q_T}\alpha(x)\left(\langle\nu^m,q\rangle-\langle\nu^{w},q\rangle\right)\cdot\langle\nu^m,\mathrm{id}\rangle\psi dxdt\right|\rightarrow 0,~\text{as}~m\rightarrow\infty.
	$$
	Thus,
	$$
	\begin{aligned}
		&\left|\iint_{Q_T}\left(\alpha(x)\langle\nu^m,q\rangle\cdot\langle\nu^m,\mathrm{id}\rangle-\alpha\langle\nu^{w},q\rangle\cdot\langle\nu^{w},\mathrm{id}\rangle\right)\psi dxdt\right|\notag\\
		\leq&\left|\iint_{Q_T}\alpha(x)\left(\langle\nu^m,\vec{q}\rangle-\langle\nu^{w},q\rangle\right)\cdot\langle\nu^m,\mathrm{id}\rangle\psi dxdt\right|\notag\\
		&+\left|\iint_{Q_T}\alpha(x)\langle\nu^{w},q\rangle\cdot\left(\langle\nu^m,\mathrm{id}\rangle-\langle\nu^{w},\mathrm{id}\rangle\right)\psi dxdt\right|\rightarrow 0,~\text{as}~m\rightarrow\infty.
	\end{aligned}
	$$
	So $\langle\nu^m,q\rangle\cdot\langle\nu^m,\mathrm{id}\rangle$ converges weakly to $\langle\nu^{w},q\rangle\cdot\langle\nu^{w},\mathrm{id}\rangle$ in $L^1(Q_T)$. Considering that $\langle\nu^m,q\cdot\mathrm{id}\rangle$ converges to $\langle\nu,q\cdot\mathrm{id}\rangle$ in the biting sense, there exists a decreasing sequence of subsets $E_{j+1}\subseteq E_j$ of $Q_T$ with $\lim_{j\rightarrow\infty}\mathrm{meas}(E_j)=0$ such that
	$$
	\langle\nu^{w},q\cdot\mathrm{id}\rangle=\langle\nu^{w},q\rangle\cdot\langle\nu^{w},\mathrm{id}\rangle,\quad\text{a.e.}~(x,t)\in Q_T\backslash E_j,~j=1,2,\cdots.
	$$
	Since $\langle\nu^{w},q\cdot\mathrm{id}\rangle\in L^1(Q_T)$, we get that
	$$
	\langle\nu^{w},q\cdot\mathrm{id}\rangle=\langle\nu^{w},q\rangle\cdot\langle\nu^{w},\mathrm{id}\rangle,\quad\text{a.e.}~(x,t)\in Q_T.
	$$
	Therefore, $u_w$ is the desired Young measure solution to the problem ($\mathcal{P}_w$). Furthermore, (\ref{l<u_w<d}) and (\ref{boundness-for-u_w-L^infty(0,T;W^1,p)-du_w/dt-L^2}) easily follow from (\ref{boundness-for-u_m}) and (\ref{boundness-for-du_m/dt-and-(u_m-v_m)}). The proof is complete.
\end{proof}
\section{Proof of Theorem \ref{thirdtheorem}}\label{proof-of-main-thm}
In this section, we will prove the existence of Young measure solutions to the problem (\ref{P1})--(\ref{P3}). We begin by presenting the uniqueness result for problem ($\mathcal{P}_w$), which is crucial for our subsequent analysis. 

It is important to note that uniqueness only pertains to the function $u_w$, while the gradient Young measure $\nu^w=(\nu^w_{x,t})_{(x,t)\in Q_T}$ is in general not unique. Here, the independence property (\ref{independence-for-uw}) plays a significant role in the following proof.
\begin{lem}\label{secondtheorem}
	Assume that $u_w$ with $\nu^w$ is a Young measure solution of problem \emph{($\mathcal{P}_{w}$)}, then the function $u_w$ is unique.
\end{lem}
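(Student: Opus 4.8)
The plan is to take two Young measure solutions $u_1$ (with $\nu^1$) and $u_2$ (with $\nu^2$) of $(\mathcal P_w)$, subtract the weak identities (\ref{equilibrium-equation-for-uw}), and test the difference with $\eta=u_1-u_2$. Two features make this work: after subtraction the source term becomes $\frac{u_1-u_2}{w^2}$ with $\frac1{w^2}\ge\frac1{d^2}>0$, which directly controls $\|u_1-u_2\|_{L^2(Q_T)}$; and the independence property (\ref{independence-for-uw}) together with the support condition (\ref{support-of-uw}) turns each flux $\langle\nu^i,q\rangle$ into $q^{**}(\nabla u_i)$, so the flux term becomes genuinely monotone in $\nabla u_1-\nabla u_2$.

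\textbf{Step 1 (pointwise identification of the flux).} Fix $i$, write $\nu=\nu^i_{x,t}$ and $\bar\xi=\langle\nu,\mathrm{id}\rangle=\nabla u_i(x,t)$. Since $\operatorname{supp}\nu\subseteq\{\varphi=\varphi^{**}\}\subseteq\{q=q^{**}\}$ (the inclusion recalled in Section~\ref{preliminaries}), one has $\langle\nu,q\rangle=\langle\nu,q^{**}\rangle$ and $\langle\nu,q\cdot\mathrm{id}\rangle=\langle\nu,q^{**}\cdot\mathrm{id}\rangle$; combining this with (\ref{independence-for-uw}) gives $\langle\nu,q^{**}\cdot\mathrm{id}\rangle=\langle\nu,q^{**}\rangle\cdot\bar\xi$. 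Expanding the (integrable, by (\ref{growth-condition-for-varphi*-and-p}) and the $W^{1,p}$-gradient Young measure property) quadratic form,
\[
\int_{\mathbb R^n}\bigl(q^{**}(\xi)-q^{**}(\bar\xi)\bigr)\cdot(\xi-\bar\xi)\,d\nu(\xi)=\langle\nu,q^{**}\cdot\mathrm{id}\rangle-\langle\nu,q^{**}\rangle\cdot\bar\xi=0 .
\]
As $\varphi^{**}$ is convex and $C^1$, the integrand is $\ge 0$, hence $=0$ for $\nu$-a.e.\ $\xi$; a rigidity fact for $C^1$ convex functions (if $(\nabla\varphi^{**}(\xi)-\nabla\varphi^{**}(\bar\xi))\cdot(\xi-\bar\xi)=0$ then $\varphi^{**}$ is affine on $[\bar\xi,\xi]$ and $q^{**}(\xi)=q^{**}(\bar\xi)$) then yields $q^{**}(\xi)=q^{**}(\bar\xi)$ $\nu$-a.e., so
\[
\langle\nu^i_{x,t},q\rangle=\langle\nu^i_{x,t},q^{**}\rangle=q^{**}\bigl(\nabla u_i(x,t)\bigr),\qquad\text{a.e. }(x,t)\in Q_T .
\]

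\textbf{Steps 2--3 (energy estimate and conclusion).} Subtracting (\ref{equilibrium-equation-for-uw}) for $u_1$ and $u_2$ and inserting Step 1 gives, for all $\eta\in C^\infty(\overline{Q_T})$,
\[
\iint_{Q_T}\Bigl(\alpha(x)\bigl(q^{**}(\nabla u_1)-q^{**}(\nabla u_2)\bigr)\cdot\nabla\eta+\tfrac{\partial(u_1-u_2)}{\partial t}\eta+\tfrac{u_1-u_2}{w^2}\eta\Bigr)dxdt=0 .
\]
Since $u_1-u_2\in L^\infty(0,T;W^{1,p}(\Omega))$ with $\partial_t(u_1-u_2)\in L^2(Q_T)$, hence $u_1-u_2\in C([0,T];L^2(\Omega))$, a density/mollification-in-time argument lets us take $\eta=u_1-u_2$; using $u_1(\cdot,0)=u_2(\cdot,0)=f$ from (\ref{trace-of-uw}) one has $\iint_{Q_T}\partial_t(u_1-u_2)(u_1-u_2)=\tfrac12\|(u_1-u_2)(\cdot,T)\|_{L^2(\Omega)}^2$, whence
\[
\iint_{Q_T}\alpha(x)\bigl(q^{**}(\nabla u_1)-q^{**}(\nabla u_2)\bigr)\cdot(\nabla u_1-\nabla u_2)\,dxdt+\tfrac12\|(u_1-u_2)(\cdot,T)\|_{L^2(\Omega)}^2+\iint_{Q_T}\tfrac{(u_1-u_2)^2}{w^2}dxdt=0 .
\]
The first term is $\ge 0$ because $\alpha\ge\alpha_1>0$ and $q^{**}=\nabla\varphi^{**}$ is monotone; the third is $\ge 0$ because $l\le w\le d$. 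Hence all three vanish, and the vanishing of the last one forces $u_1=u_2$ a.e.\ in $Q_T$.

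\textbf{Main obstacle.} The crux is Step 1: converting the independence property into the pointwise identity $\langle\nu^w,q\rangle=q^{**}(\nabla u_w)$. The algebra using (\ref{support-of-uw}) and (\ref{independence-for-uw}) is short, but one must then exploit monotonicity of $q^{**}$ and the rigidity of $C^1$ convex functions to upgrade ``the monotonicity defect integrates to $0$'' to equality on $\operatorname{supp}\nu^w_{x,t}$; this is exactly the point where the non-uniqueness of $\nu^w$ is harmless for $u_w$. A secondary technical point is the admissibility of $\eta=u_1-u_2$ as a test function and the time integration by parts, which rests on $u_1-u_2\in C([0,T];L^2(\Omega))$ and the shared initial datum.
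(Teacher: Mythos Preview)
Your argument is correct, and the overall strategy (subtract the two weak formulations, test with the difference, use the independence and support properties to see that the flux term has a sign, conclude from the source term) matches the paper. The genuine difference is in \emph{how} you extract the sign of the flux term.

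You spend Step~1 proving the pointwise identity $\langle\nu^i_{x,t},q\rangle=q^{**}(\nabla u_i(x,t))$ via a rigidity argument for $C^1$ convex functions (if $(\nabla\varphi^{**}(\xi)-\nabla\varphi^{**}(\bar\xi))\cdot(\xi-\bar\xi)=0$ then the Bregman remainder $h(z)=\varphi^{**}(z)-\varphi^{**}(\bar\xi)-\nabla\varphi^{**}(\bar\xi)\cdot(z-\bar\xi)$ is nonnegative, $C^1$, and vanishes at $\xi$, so $\nabla h(\xi)=0$). This is valid and in fact yields a statement the paper never states for the limiting measure, only for the time-discrete measures $\nu^{m,j}$ in (\ref{p(gradient-of-u_m^j)=<nu_m^j,p>=<nu_m^j,q>}). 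With that identity in hand, monotonicity of $q^{**}$ finishes the job.

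The paper bypasses this reduction entirely: using (\ref{independence-for-uw}) and (\ref{support-of-uw}) it rewrites the cross term directly as the \emph{double} integral
\[
\iint_{Q_s}\alpha(x)\int_{\mathbb{R}^n}\int_{\mathbb{R}^n}\bigl(q^{**}(\lambda)-q^{**}(\sigma)\bigr)\cdot(\lambda-\sigma)\,d\nu^w(\lambda)\,d\mu^w(\sigma)\,dxdt\ \ge\ 0,
\]
and reads off the sign from convexity of $\varphi^{**}$ without ever identifying $\langle\nu^w,q\rangle$ pointwise. This is shorter and avoids the rigidity step; your route is slightly longer but gives the extra structural information $\langle\nu^w,q\rangle=q^{**}(\nabla u_w)$, which makes transparent why non-uniqueness of $\nu^w$ does not affect $u_w$. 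A minor cosmetic point: the paper tests with $(u_w-z_w)\chi_{[0,s]}$ and reads off $\|u_w-z_w\|_{L^2(\Omega)}^2(s)=0$ for every $s$, while you work on all of $Q_T$ and conclude from the source term $\iint_{Q_T}\frac{(u_1-u_2)^2}{w^2}=0$; either is fine.
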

\begin{proof}
	Suppose $u_w,~z_w\in W(0,T)\cap L^{\infty}(Q_T)$ are two Young measure solutions to problem ($\mathcal{P}_w$). Let $\nu^w$ and $\mu^w$ be the $W^{1,p}(Q_T)$-gradient Young measure with respect to $u_w$ and $z_w$, respectively. For any $s\in [0,T]$, choosing
	$$
	\eta(x,t)=\left(u_{w}(x,t)-z_{w}(x,t)\right)\chi_{[0,s]}(t),\quad (x,t)\in Q_T,
	$$
	in (\ref{equilibrium-equation-for-uw}), one gets that
	\begin{align}
		\iint_{Q_s}&\alpha(x)\langle \nu^w,q\rangle \cdot \nabla (u_w-z_w)dxdt+\iint_{Q_s}\frac{\partial u_w}{\partial t} (u_w-z_w)dxdt\notag\\
		&+\iint_{Q_s}\frac{u_w-f}{w^2}(u_w-z_w)dxdt=0,\notag
	\end{align}
	and
	\begin{align}
		\iint_{Q_s}&\alpha(x)\langle \mu^w, q\rangle \cdot \nabla (u_w-z_w)dxdt+\iint_{Q_s}\frac{\partial z_w}{\partial t} (u_w-z_w)dxdt\notag\\
		&+\iint_{Q_s}\frac{z_w-f}{w^2}(u_w-z_w)dxdt=0.\notag
	\end{align}
	Combine these two equalities to get that
	\begin{align}
		&\int_{\Omega}\left(u_w(x,s)-z_w(x,s)\right)^2dx
		+2\iint_{Q_s}\frac{(u_w-z_w)^2}{w^2}dxdt\notag\\
		=&
		-2\iint_{Q_s}\alpha(x)\left(\langle\nu^w, q\rangle-\langle\mu^w, q\rangle\right)\cdot\nabla (u_w-z_w)dxdt.\label{equality-for-(u_w,z_w)-and-(nu^w,mu^w)}	
	\end{align}
	We estimate the term on the right side of (\ref{equality-for-(u_w,z_w)-and-(nu^w,mu^w)}). From (\ref{independence-for-uw}) and (\ref{support-of-uw}), it follows that
 \begin{align}
     &\iint_{Q_s}\alpha(x)\left(\langle\nu^w, q\rangle-\langle\mu^w, q\rangle\right)\cdot\nabla (u_w-z_w)dxdt\notag\\
		=&\iint_{Q_s}\alpha(x)\left(\langle\nu^w, q\rangle-\langle\nu^w, q\rangle\right)\cdot \left(\langle \nu^w, \mathrm{id}\rangle-\langle \mu^w, \mathrm{id}\rangle\right)dxdt\notag\\
		=&\iint_{Q_s} \alpha(x)\langle\nu^w, \vec{q}\rangle\cdot\langle\nu^w, \mathrm{id}\rangle-\iint_{Q_s}\alpha(x)\langle \nu^w, q\rangle\cdot\langle \mu^w, \mathrm{id}\rangle dxdt\notag\\
		&-\iint_{Q_s}\alpha(x)\langle \mu^w, q\rangle\cdot\langle \nu^w, \mathrm{id}\rangle dxdt+\iint_{Q_s}\alpha(x)\langle \mu^w, q\rangle\cdot\langle \mu^w, \mathrm{id}\rangle dxdt\notag\\
		=&\iint_{Q_s} \alpha(x)\left(\langle \nu^w, q\cdot\mathrm{id}\rangle-\langle\nu^w, q\rangle\cdot\langle \mu^w, \mathrm{id}\rangle-\langle \mu^w, q\rangle\cdot\langle \nu^w, \mathrm{id}\rangle \notag\right. \\
		&\left. +\langle \mu^w, q\cdot\mathrm{id}\rangle\right) dxdt\notag\\
		=&\iint_{Q_s}\alpha(x)\int_{\mathbb{R}^n}\int_{\mathbb{R}^n}\left(q(\lambda)-q(\sigma)\right)\cdot(\lambda-\sigma)d\nu^w(\lambda)d\mu^w(\sigma)dxdt\notag\\
  =&\iint_{Q_s}\alpha(x)\int_{\mathbb{R}^n}\int_{\mathbb{R}^n}\left(q^{**}(\lambda)-q^{**}(\sigma)\right)\cdot(\lambda-\sigma)d\nu^w(\lambda)d\mu^w(\sigma)dxdt\notag\\
		=&\iint_{Q_s}\int_{\mathbb{R}^n}\int_{\mathbb{R}^n}\alpha(x)\left(\nabla\varphi^{**}(\lambda)-\nabla\varphi^{**}(\sigma)\right)\cdot(\lambda-\sigma)d\nu^w(\lambda)\notag\\
		&d\mu^w(\sigma)dxdt,
 \end{align}
 which, together with the convexity of $\varphi^{**}$, implies
	\begin{equation}
		\iint_{Q_s}\alpha(x)\left(\langle \nu^w, q\rangle-\langle \mu^w, q\rangle\right)\cdot\nabla (u_w-z_w)dxdt\geq 0.\label{inequality-for-(nu^w-mu^w)-and-(u_w-z_w)}
	\end{equation}
	Substituting (\ref{inequality-for-(nu^w-mu^w)-and-(u_w-z_w)}) into (\ref{equality-for-(u_w,z_w)-and-(nu^w,mu^w)}), we obtain $u_w(x,t)=z_w(x,t)$, a.e. $(x,t)\in Q_T$, and this shows uniqueness.
\end{proof}

Now we present the proof of our main result using Schauder's fixed-point theorem.
\begin{proof}[Proof of Theorem \ref{thirdtheorem}]
To proceed, we introduce the subset $W_0$ of $W(0,T)$, defined by
		$$
		W_0=\left\{
		\begin{aligned}
			&u\in W(0,T),\left\| u\right\|_{L^{\infty}(0,T;W^{1,p}(\Omega))}\leq M,\\
			&\left\| \frac{\partial u}{\partial t} \right\|_{L^2(Q_T)}\leq M,~l\leq u\leq d,~\text{a.e.}~(x,t)\in Q_T,\\
			& \text{and}~u(x,0)=f(x) \text{ in the sense of trace}
		\end{aligned}
		\right\}
		$$
		Then, we find that $W_0$ is a nonempty, convex, and compact subset of $W(0,T)$. Consider a mapping
		\begin{align}
			\mathcal{T}:&W_0\rightarrow W_0\notag\\
			&w\mapsto u_{w}\notag
		\end{align}
		In order to use the Schauder's fixed-point theorem on $\mathcal{T}$, we only need to prove that the mapping $\mathcal{T}:w\rightarrow u_w$ is continuous from $W_0$ into $W_0$. Let $w_k$ be a sequence that converges to some $w$ in $W_0$, and let $u_{w_k}=u_k$ be the Young measure solution of problem ($\mathcal{P}_{w_k}$) with the initial data $f$ with respect to the $W^{1,p}(Q_T)$-gradient Young measures $\nu^{w_k}=\nu^k$; consequently, it follows that
		\begin{equation}
			\iint_{Q_T}\left(\alpha(x)\langle\nu^k,q\rangle \cdot\nabla\eta+\frac{\partial u_k}{\partial t}\eta+\frac{u_k-f}{w_k^2}\eta\right)dxdt=0,\label{equilibrium-for-uk}
		\end{equation}
		for any $\eta\in C^{\infty}(\overline{Q_T})$, and
		\begin{equation}
			\nabla u_k(x,t)=\langle\nu^k_{x,t},\mathrm{id}\rangle,\quad\text{a.e.}~(x,t)\in Q_T,\label{gradient-of-uk}
		\end{equation}
		\begin{equation}
			\langle\nu^k_{x,t},q\cdot \mathrm{id}\rangle=\langle\nu^k_{x,t}, q\rangle\cdot\langle\nu^k_{x,t}, \mathrm{id}\rangle,\quad\text{a.e.}~(x,t)\in Q_T,\label{independence-for-nu^k}
		\end{equation}
		\begin{equation}
			\operatorname{supp}\nu^k_{x,t}\subseteq\left\{\xi\in \mathbb{R}^n: \varphi(\xi)=\varphi^{**}(\xi)\right\},\quad\text{a.e.}~(x,t)\in Q_T.\label{support-of-nu^k}
		\end{equation}
		Also, $u_k(x,0)=u_0(x)$, $x\in\Omega$ in the sense of trace. Furthermore, from the proof of Lemma \ref{firsttheorem}, we see that $u_k\in W_0$, and $\nu^k$ satisfies the estimates
		\begin{equation}
			\left\|\nu^k\right\|_{L^1(Q_T;(\mathcal{E}_0^p(\mathbb{R}^n))^{\prime})}\leq M,\quad \left\|\langle\nu^k,q \rangle\right\|_{L^{p/(p-1)}(Q_T)}\leq M,\quad k\geq 1.\label{boundness-for-nu^k}
		\end{equation}
		Subsequently, we will demonstrate that $\mathcal{T}(w_k):=u_k$ converges to $\mathcal{T}(w):=u_w$ in $W_0$. From the above estimates and classical theorems of compact inclusion in Sobolev spaces, we can extract from $w_k$, respectively from $u_k$, a subsequence (labeled $w_k$, respectively $u_k$) such that
		\begin{align}
			&u_k\rightharpoonup u~\text{weakly}^\ast~\text{in}~L^{\infty}\left(0,T;W^{1,p}(\Omega)\right),\notag\\
			&w_k\rightharpoonup w~\text{weakly}^\ast~\text{in}~L^{\infty}\left(0,T;W^{1,p}(\Omega)\right),\notag\\
			&u_k\rightarrow u~\text{strongly in}~L^p(Q_T)~\text{and a.e. on}~Q_T,\notag\\
			&w_k\rightarrow w~\text{strongly in}~L^2(Q_T)~\text{and a.e. on}~Q_T,\notag\\
			&\frac{u_k-f}{w_k^2}\rightarrow\frac{u-f}{w^2}~\text{strongly in}~L^1(Q_T)~\text{and a.e. on}~Q_T,\notag\\
			&\nabla u_k\rightharpoonup\nabla u~\text{weakly in}~L^p(Q_T),\notag\\
			&\frac{\partial u_k}{\partial t}\rightharpoonup\frac{\partial u}{\partial t}~\text{weakly in}~L^2(Q_T).\label{all-convergence}
		\end{align}
		In addition, applying Lemma \ref{WangChunpeng-YoungMeasure} with the uniform estimates (\ref{boundness-for-nu^k}), there exists a subsequence (not relabeled) of the $(\nu^k)_{k\geq 1}$ and a $W^{1,p}(Q_T)$-gradient Young measure $\nu=(\nu_{x,t})_{(x,t)\in Q_T}$, such that
		\begin{align}
			&\langle\nu^k,\mathrm{id}\rangle\rightharpoonup\langle\nu,\mathrm{id}\rangle~\text{weakly in}~L^p(Q_T),\notag\\
			&\langle\nu^k,q\rangle\rightharpoonup\langle\nu,q\rangle~\text{weakly in}~L^{p/(p-1)}(Q_T),\notag\\
			&\langle\nu^k,q\cdot\mathrm{id}\rangle\rightharpoonup\langle\nu,q\cdot\mathrm{id}\rangle~\text{in the biting sense}.\label{all-convergence-for-measure}
		\end{align}
		From the above convergences we can pass to the limit in ($\mathcal{P}_{w_k}$) and we now show that $u=\mathcal{T}(w)$. First, letting $k\rightarrow\infty$ in (\ref{equilibrium-for-uk}) with (\ref{all-convergence}) and (\ref{all-convergence-for-measure}), we obtain
		\begin{equation}
			\iint_{Q_T}\left(\alpha(x)\langle\nu,q\rangle \cdot\nabla\eta+\frac{\partial u}{\partial t}\eta+\frac{u-f}{w^2}\eta\right)dxdt=0,\quad\forall\eta\in C^{\infty}(\overline{Q_T}).\label{equilibrium-for-u_w-and-w}
		\end{equation}
		Second, recall that $\nabla u_k=\langle\nu^k,\mathrm{id}\rangle$ and that it converges weakly to $\nabla u$ in $L^p(Q_T)$, and (\ref{all-convergence-for-measure}) yields
		$$
		\nabla u(x,t)=\langle\nu_{x,t},\mathrm{id}\rangle,\quad\text{a.e.}~(x,t)\in Q_T.
		$$
		Third, by the similar methods as in Theorem \ref{firsttheorem} for proving the independence property (\ref{independence-for-uw}), we can deduce from (\ref{equilibrium-equation-for-u}) and (\ref{equilibrium-for-uk}) that for any $\psi\in C_0^{\infty}(Q_T)$,
		\begin{align}
			&\left|\iint_{Q_T}\alpha\left(\langle\nu^k,q\rangle-\langle\nu,q\rangle\right)\cdot\langle\nu^k,\mathrm{id}\rangle\psi dxdt\right|\notag\\
			\leq&\left|\iint_{Q_T}\alpha\left(\langle\nu^k,q\rangle-\langle\nu,q\rangle\right)\cdot\nabla(u_k\psi)dxdt\right|\notag\\
			&+\left|\iint_{Q_T}u_k\alpha\left(\langle\nu^k,q\rangle-\langle\nu,q\rangle\right)\cdot \nabla\psi dxdt\right|\notag\\
			\leq &\left|\iint_{Q_T}\left(\frac{\partial u_k}{\partial t}-\frac{\partial u}{\partial t}\right)u\psi dxdt\right|\notag\\
			&+\left|\iint_{Q_T}\left(\frac{\partial u_k}{\partial t}-\frac{\partial u}{\partial t}\right)(u_k-u)\psi dxdt\right|\notag\\
			&+\left|\iint_{Q_T}\left(\frac{u_k-f}{w_k^2}-\frac{u-f}{w^2}\right)u_k\psi dxdt\right|\notag\\
			&+\left|\iint_{Q_T}u\alpha\left(\langle\nu^k,q\rangle-\langle\nu,q\rangle\right)\cdot\nabla\psi dxdt\right|\notag\\
			&+\left|\iint_{Q_T}\alpha(u_k-u)\left(\langle\nu^k,q\rangle-\langle\nu,q\rangle\right)\cdot\nabla\psi dxdt\right|\rightarrow 0,~\text{as}~k\rightarrow\infty.
		\end{align}
		Note that
		$$
		\begin{aligned}
			&\left|\iint_{Q_T}\alpha(x)\left(\langle\nu^k,q\rangle\cdot\langle\nu^k,\mathrm{id}\rangle-\langle\nu,q\rangle\cdot\langle\nu,\mathrm{id}\rangle\right)\psi dxdt\right|\notag\\
			\leq&\left|\iint_{Q_T}\alpha(x)\left(\langle\nu^k,q\rangle-\langle\nu,q\rangle\right)\cdot\langle\nu^k,\mathrm{id}\rangle\psi dxdt\right|\notag\\
			&+\left|\iint_{Q_T}\alpha(x)\langle\nu,q\rangle\cdot\left(\langle\nu^k,\mathrm{id}\rangle-\langle\nu,\mathrm{id}\rangle\right)\psi dxdt\right|\rightarrow 0,~\text{as}~k\rightarrow\infty,
		\end{aligned}
		$$
		which, together with (\ref{independence-for-nu^k}), (\ref{all-convergence-for-measure}), implies
		$$
		\langle\nu_{x,t},q\cdot\mathrm{id}\rangle=\langle\nu_{x,t},q\rangle\cdot\langle\nu_{x,t},\mathrm{id}\rangle,\quad\text{a.e.}~(x,t)\in Q_T\backslash F_j,\quad j=1,2,\cdots,
		$$
		where $\mathrm{meas}(F_j)\rightarrow 0$, as $j\rightarrow\infty$. Since $\langle\nu_{x,t},\vec{q}\cdot\mathrm{id}\rangle\in L^1(Q_T)$, we see that
		$$
		\langle\nu_{x,t},q\cdot\mathrm{id}\rangle=\langle\nu_{x,t},q\rangle\cdot\langle\nu_{x,t},\mathrm{id}\rangle,\quad\text{a.e.}~(x,t)\in Q_T,
		$$
		Finally, (\ref{support-of-u}) follows from (\ref{support-of-nu^k}) and (\ref{all-convergence-for-measure}), while (\ref{all-convergence}) guarantees that (\ref{P3}) holds in the sense of trace. 
		
		Therefore, $u$ with $\nu$ is a Young measure solution of problem ($\mathcal{P}_{w}$) and we have $u=u_w=\mathcal{T}(w)$. Moreover, $u_k$ is unique due to Theorem \ref{secondtheorem}, the whole sequence $u_k=\mathcal{T}(w_k)$ converges to $u=\mathcal{T}(w)$ in $W_0$. Hence $\mathcal{T}$ is continuous. Consequently, thanks to the Schauder fixed point theorem, there exists $w\in W_0$ such that $w=\mathcal{T}(w)=u_w$. Thus, $u$ is a Young measure solution of problem (\ref{P1})--(\ref{P3}). This establishes the existence of Young measure solutions.

    \textbf{Continuous dependence and uniqueness of Young measure solutions}: Suppose $u,v\in W_0$ are two Young measure solutions to the problem (\ref{P1})--(\ref{P2}) corresponding to the initial data $f_{01},f_{02}$, respectively. Let $\nu$ and $\mu$ be the $W^{1,p}(Q_T)$-gradient Young measure with respect to $u$ and $v$, respectively. Similar to the proof of (\ref{equality-for-(u_w,z_w)-and-(nu^w,mu^w)}), apply equation (\ref{equilibrium-equation-for-u}) using $(u-v)\chi_{[0,s]}$, $s\in [0,T]$ as the test function to obtain
	\begin{align}
		&\int_{\Omega}\left(u(x,s)-v(x,s)\right)^2 dx-\int_{\Omega}\left(f_{01}-f_{02}\right)^2 dx\notag\\
		=&-2\iint_{Q_s}\left(\frac{u-f_{01}}{u^2}-\frac{v-f_{02}}{v^2}\right)(u-v) dxdt
		\notag\\
		&-2\iint_{Q_s}\alpha(x)\left(\langle\nu, q\rangle-\langle\mu,q\rangle\right)\cdot\nabla(u-v)dxdt,\label{equality-for-(u,v)-and-(nu,mu)}
	\end{align}
 Now, we estimate each term on the right side of (\ref{equality-for-(u,v)-and-(nu,mu)}). Since $u$, $v\in W_0$, using the mean value theorem yields

 \begin{align}
     &-2\iint_{Q_s}\left(\frac{u-f_{01}}{u^2}-\frac{v-f_{02}}{v^2}\right)(u-v) dxdt
		\notag\\
		= & 2\iint_{Q_s}\left(\frac{1}{v}-\frac{1}{u} \right)(u-v) dxdt+2\iint_{Q_s}\left(\frac{f_{01}}{u^2}-\frac{f_{02}}{u^2}\right)(u-v) dxdt\notag\\
		&+2\iint_{Q_s}\left(\frac{f_{02}}{u^2}-\frac{f_{02}}{v^2}\right)(u-v) dxdt\notag\\
		= & 2\iint_{Q_s}\int_{0}^{1}(v+\sigma(u-v))^{-2} d\sigma(u-v)^2 dxdt+2\iint_{Q_s}\frac{f_{01}-f_{02}}{u}\frac{u-v}{u} dxdt\notag\\
		&-4\iint_{Q_s}v_0\int_{0}^{1}(v+\sigma(u-v))^{-3} d\sigma(u-v)^2 dxdt\notag\\
		\leq & 2\iint_{Q_s}\int_{0}^{1}(v+\sigma(u-v))^{-2}(u-v)^2 d\sigma dxdt+\iint_{Q_s}\frac{(f_{01}-f_{02})^2}{u^2} dxdt\notag\\
		&+\iint_{Q_s}\frac{(u-v)^2}{u^2} dxdt\notag\\
  \leq & \frac{3}{l^2}\int_{0}^{s}\int_{\Omega}(u(x,t)-v(x,t))^2 dxdt+\frac{T}{l^2}\int_{\Omega}(f_{01}-f_{02})^2 dx.\notag
 \end{align}
	Combining this, (\ref{inequality-for-(nu^w-mu^w)-and-(u_w-z_w)}) with (\ref{equality-for-(u,v)-and-(nu,mu)}), we obtain
	$$
	\begin{aligned}
		&\int_{\Omega}\left(u(x,s)-v(x,s)\right)^2 dx-\frac{l^2+T}{l^2}\int_{\Omega}(f_{01}(x)-f_{02}(x))^2 dx\\
		\leq& \frac{3}{l^2}\int_{0}^{s}\int_{\Omega}(u(x,t)-v(x,t))^2 dxdt,\quad s\in [0,T],
	\end{aligned}
	$$
	which leads to
 \begin{equation}
		\int_{\Omega}\left(u(x,t)-v(x,t)\right)^2 dx\leq C\int_{\Omega}(f_{01}-f_{02})^2 dx,\quad 0\leq t\leq T,\label{continuous-dependence-of-YoungMeasure-solutions}
    \end{equation}
    by using the Gronwall inequality. Note that when $f_{01}=f_{02}$ is used in (\ref{continuous-dependence-of-YoungMeasure-solutions}), we have $u(x,t)=v(x,t)$, a.e. $(x,t)\in Q_T$, which implies the uniqueness of the Young measure solution to the problem (\ref{P1})--(\ref{P3}). Thus, the proof of Theorem \ref{thirdtheorem} is completed.
\end{proof}
\section{Numerical experiments}
\label{Numerical experiments}
In this section, we test the proposed model for removing multiplicative Gamma noise. 

For the applications \cite{zhang2020,dong2013}, we choose the $\alpha(x)$ in (\ref{P1}) as
$$
\alpha(x)=\left(\frac{f_{\sigma}}{M}\right)^{\beta},\quad x\in\mathbb{R}^2,
$$
where $G_{\sigma}$ is Gaussian function with a variance $\sigma^2$, $f_{\sigma}=G_{\sigma}\ast f$, $M=\max_{x} f_{\sigma}(x)$, and $\beta>0$. The explicit finite
difference \cite{rudin1992} is employed for the numerical implementation. Assume $\tau$ to be the time step size and $h=1$ the space grid size, we first give the following notation:
$$
x_i=ih,\quad y_j=jh,\quad i=0,1,\cdots,I,\quad j=0,1,\cdots,J,
$$
where $I$, $J$ are the pixels numbers of the image,
$$
\begin{aligned}
	&t_n=n\tau,~u_{i,j}^0=f(ih,jh),~u_{i,j}^n=u(ih,jh,t_n),~ n=0,1,2,\cdots,\\
&u_{i,0}^n=u_{i,1}^n,~u_{0,j}^n=u_{1,j}^n,~u_{i,J}^n=u_{i,J-1}^n,~u_{I,j}^n=u_{I-1,j}^n,\\
	&\nabla_{\pm}^x u_{i,j}^n=\pm(u_{i\pm1,j}^n-u_{i,j}^n),~\nabla_{\pm}^y u_{i,j}^n=\pm(u_{i,j\pm 1}^n-u_{i,j}^n),\\
 &m(a,b)=\frac{1}{2}\left(\mathrm{sgn}(a)+\mathrm{sgn}(b)\right) \min(\left|a\right|,\left|b\right|),~\left(f_{\sigma}\right)_{i,j}=f_{\sigma}(ih,jh),\\
 &b_{i,j}^{x,n}=\left((\nabla_{+}^x u_{i,j}^n)^2+(m(\nabla_{+}^y u_{i,j}^n,\nabla_{-}^y u_{i,j}^n))^2\right)^{\frac{1}{2}},\\
 &b_{i,j}^{y,n}=\left((\nabla_{+}^y u_{i,j}^n)^2+(m(\nabla_{+}^x u_{i,j}^n,\nabla_{-}^x u_{i,j}^n))^2\right)^{\frac{1}{2}},\\
 &c_{i,j}^{x,n}=\frac{\nabla_{+}^x u_{i,j}^n}{1+(\nabla_{+}^x u_{i,j}^n)^2+(\nabla_{+}^y u_{i,j}^n)^2}+\delta\frac{\nabla_{+}^x u_{i,j}^n}{(b_{i,j}^{x,n})^{2-p}},\\
	&c_{i,j}^{y,n}=\frac{\nabla_{+}^y u_{i,j}^n}{1+(\nabla_{+}^x u_{i,j}^n)^2+(\nabla_{+}^y u_{i,j}^n)^2}+\delta\frac{\nabla_{+}^y u_{i,j}^n}{(b_{i,j}^{y,n})^{2-p}},
\end{aligned}
$$
and present the discrete explicit scheme for the problem (\ref{P1})--(\ref{P3}):
\begin{align}
	u_{i,j}^{n+1}=&u_{i,j}^n+\tau\left(\nabla_{-}^{x}\left( 
	\frac{\left(f_{\sigma}\right)_{i,j}^\beta}{\max_{i,j}\left(f_{\sigma}\right)_{i,j}^\beta}\cdot c_{i,j}^{x,n} \right)+\nabla_{-}^{y}\left( 
	\frac{\left(f_{\sigma}\right)_{i,j}^\beta}{\max_{i,j}\left(f_{\sigma}\right)_{i,j}^\beta}\cdot c_{i,j}^{y,n} \right)\right)\notag\\
	&+\lambda\tau\frac{u_{i,j}^0-u_{i,j}^n}{(u_{i,j}^n)^2}.\notag
\end{align}
Through the above lines, we can obtain $u_{i,j}^{n+1}$ by $u_{i,j}^{n}$. The restoration quality is measured by the peak signal-to-noise ratio (PSNR) and the mean absolute-deviation error (MAE) values \cite{durand2010}. The stopping criteria of our algorithm is designed through achieving the maximal PSNR.

To verify the effectiveness of our model (\ref{P1})--(\ref{P3}), we test two images including the Synthetic and Nimes, which are distorted by multiplicative Gamma noise with a mean equal to 1 and a variance of $\frac{1}{L}$. The results are compared with AA \cite{aubert2008}, OS \cite{shi2008} and DD \cite{zhou2014} models. 

In Figure \ref{SyntheticL=1}, we show the denoising results for the Synthetic image contaminated by multiplicative Gamma noise with $L=1$. In the visual aspect, the proposed model effectively removes multiplicative noise while preserving more corners and edges than the other three reference models. As illustrated in Figures \ref{AASynthetic12.2963uint8L=1} and \ref{OSSynthetic18.0691L=1}, the restored results of AA and OS model exhibit relatively poor visual quality when dealing with $L=1$ case. AA model’s result suffers from the indistinct edges, and the restored image from OS model tends to be piecewise constant.

Figure \ref{NimesL=4} shows comparisons for the Nimes image, corrupted by multiplicative noise with $L=4$, across each model. The visual quality of the images restored by our model is quite satisfactory. As we can see, our model exhibits comparable performance to the DD model in preserving detailed information, while surpassing it in noise removal. The numerical results indicate that the proposed model achieves a balance between denoising effectiveness and edge preservation.
\begin{figure}[t]
\centering
\subfigure[Original image]{
	\includegraphics[width=0.25\linewidth]{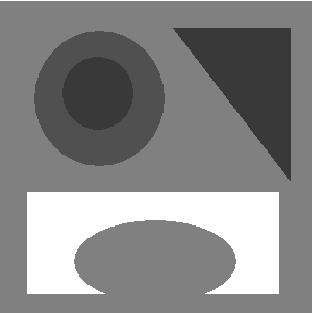}
}
\subfigure[Noise image]{
	\includegraphics[width=0.25\linewidth]{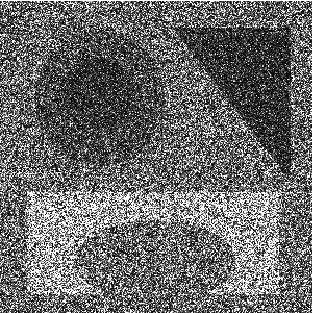}
}
\subfigure[Ours]{
	\includegraphics[width=0.25\linewidth]{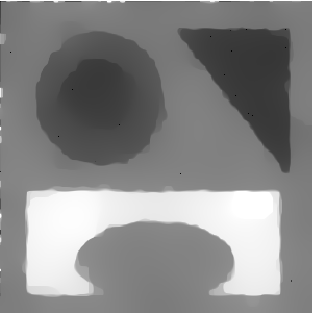}
}
\quad
\subfigure[AA]{
	\includegraphics[width=0.25\linewidth]{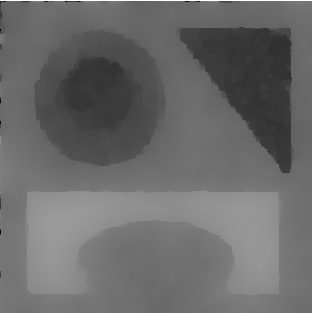}\label{AASynthetic12.2963uint8L=1}
}
\subfigure[OS]{
	\includegraphics[width=0.25\linewidth]{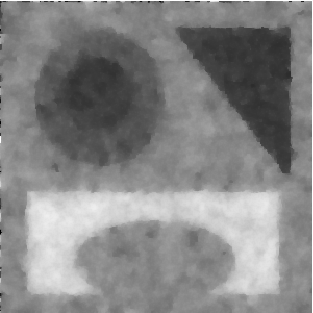}\label{OSSynthetic18.0691L=1}
}
\subfigure[DD]{
	\includegraphics[width=0.25\linewidth]{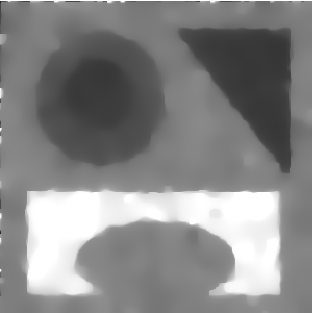}
}
\caption{Experimental results of Ours, AA, OS and DD for the Synthetic image with multiplicative Gamma noise $L=1$}\label{SyntheticL=1}
\end{figure}
\begin{figure}[t]
	\centering
	\subfigure[Original image]{
		\includegraphics[width=0.25\linewidth]{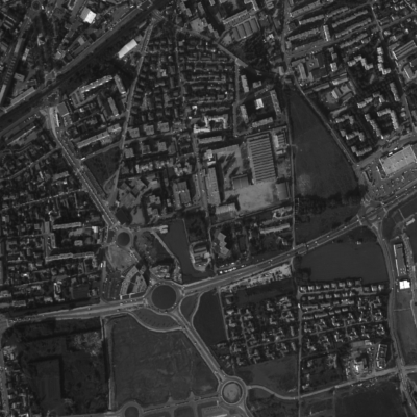}
	}
	\subfigure[Noise image]{
		\includegraphics[width=0.25\linewidth]{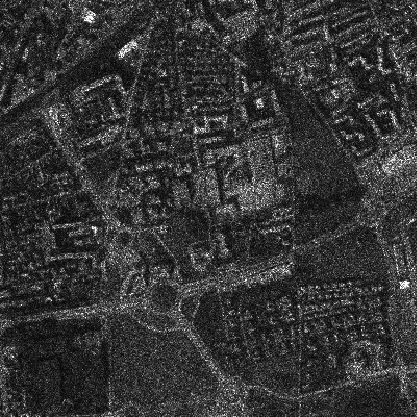}
	}
	\subfigure[Ours]{
		\includegraphics[width=0.25\linewidth]{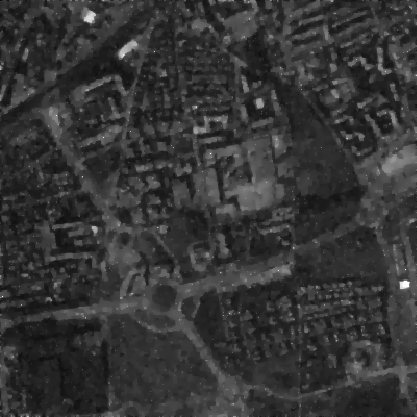}
	}
	\quad
	\subfigure[AA]{
		\includegraphics[width=0.25\linewidth]{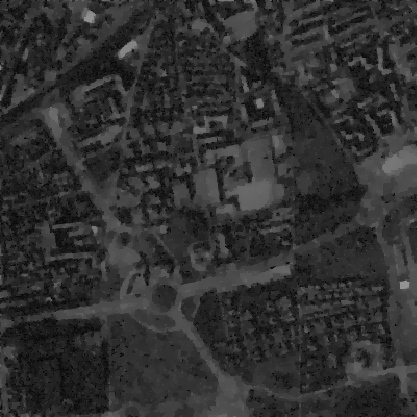}
	}
	\subfigure[OS]{
		\includegraphics[width=0.25\linewidth]{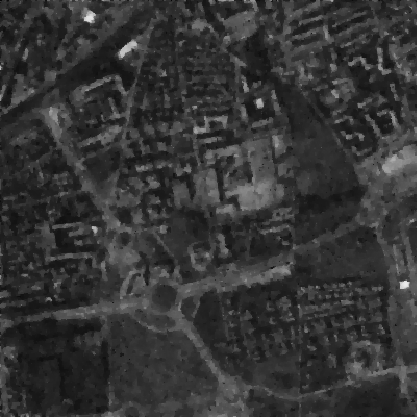}
	}
	\subfigure[DD]{
		\includegraphics[width=0.25\linewidth]{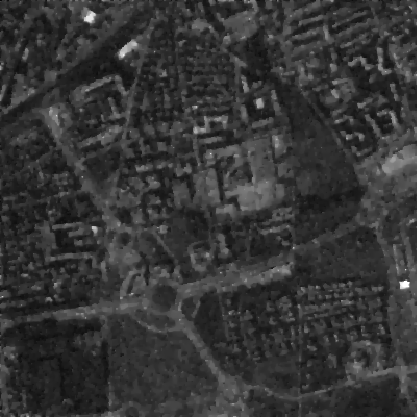}
	}
	\caption{Experimental results of Ours, AA, OS and DD for the Nimes with multiplicative Gamma noise $L=4$}\label{NimesL=4}
\end{figure}


\subsection*{Acknowledgment}
This work is partially supported by the National Natural Science Foundation of China (12301536, U21B2075, 12171123, 12371419, 12271130), the Natural Science Foundation of Heilongjiang Province
 (ZD2022A001), the Fundamental Research Funds for the Central Universities (HIT.NSRIF202302, 2022FRFK060020, 2022FRFK060031, 2022FRFK060014).

\section*{\small
 Conflict of interest} 

 {\small
 The authors declare that they have no conflict of interest.}

\end{document}